\documentclass[a4paper,12pt]{amsart}
\usepackage{amsmath,amssymb,amsfonts,amsthm}
\usepackage{bm}
\usepackage[top=25mm,bottom=25mm,left=25mm,right=25mm]{geometry}
\usepackage{array}
\usepackage{comment}
\usepackage{graphicx}
\usepackage[all]{xy}
\usepackage{relsize}
\usepackage{tikz}
\usetikzlibrary{trees}
\usetikzlibrary{cd,arrows,matrix,backgrounds,shapes,positioning,calc,decorations.markings,decorations.pathmorphing,decorations.pathreplacing}
\usepackage{multirow}
\usepackage{mathtools,leftidx}
\usepackage[colorlinks=true,linkcolor=blue,citecolor=blue,urlcolor=blue]{hyperref}
\makeatletter
\renewcommand{\@defaultbiblabelstyle}[1]{[#1]}
\makeatother

\usetikzlibrary{intersections,arrows,calc,decorations.markings}
\newtheorem{theorem}{Theorem}[section]

\newtheorem{proposition}[theorem]{Proposition}

\newtheorem{corollary}[theorem]{Corollary}
\newtheorem{lemma}[theorem]{Lemma}

\theoremstyle{definition}
\newtheorem{remark}[theorem]{Remark}
\newtheorem{example}[theorem]{Example}
\newtheorem*{example*}{Example}
\newtheorem{definition}[theorem]{Definition}

\makeatletter
 
 \@addtoreset{equation}{section}
\makeatother

\allowdisplaybreaks[4]
\title[Words for generalized Markov numbers]{Words for generalized Markov numbers}
\author{Yasuaki Gyoda}
\address[Yasuaki Gyoda]{Institute for Advanced Research, Nagoya University, Furo-cho, Chikusa-ku, Nagoya-shi, 464-8601, Japan}
\email{ygyoda@math.nagoya-u.ac.jp}
\keywords{generalized Markov number, Farey tree, Cohn word, binary tree, generalized Cohn matrix}
\subjclass[2020]{11D04, 11K60, 11J70, 20F10}

\begin{document}
\begin{abstract}
We construct a word-theoretic framework for generalized Markov numbers, that
is, positive integers appearing in positive integer solutions of the
generalized Markov equation
$x^2+y^2+z^2+k_1yz+k_2zx+k_3xy=(3+k_1+k_2+k_3)xyz$.
For each positive rational slope $t$, we define a word $\omega_t$ by a
recursive rule on a binary tree and realize it geometrically by a line segment
of slope $t$.  Matrix evaluation of $\omega_t$ gives a Markov--monodromy
matrix encoding the generalized Markov number at $t$.
We also show that $\omega_t$ recovers the classical Cohn word by a local
substitution rule, and that the completed word
$\overline{\omega}_t=xyz\omega_t^{-1}$ is related to the generalized Cohn
matrices.
\end{abstract}
\maketitle

\section{Introduction}\label{sec:introduction}

This paper concerns a part of Markov theory arising from indefinite binary
quadratic forms, Diophantine approximation, and the classical Markov equation.
In his two papers \cite{mar1,mar2}, Markov showed that certain extremal
problems in this subject are governed by the integer solutions of
\[
x^2+y^2+z^2=3xyz .
\]
The positive integers which occur in such solutions are called Markov numbers.
They are organized by the Markov tree and play a fundamental role in the
classical Markov and Lagrange spectra.  Standard references for this circle of
ideas include \cite{cusick-flahive,aig}.

One striking feature of the classical theory is that the same numbers can be
described in several different languages.  They arise from Diophantine
approximation and binary quadratic forms, but also from continued fractions,
words, and matrices.  In particular, each rational slope has an associated
Christoffel word.  In the present context, these words are often called Cohn
words because of their relation with the Cohn matrices introduced by Cohn
\cite{cohn}.  A Cohn word is evaluated as a product of two basic matrices, and
the resulting Cohn matrix is a matrixization of a Markov number: the Markov
number can be recovered from the matrix.  This gives a concrete link between
the combinatorics of rational slopes and Markov numbers.  For background on
words, including the relation between Christoffel (Cohn) words and Markov numbers,
see \cite{reutenauer}.

We now pass to the generalized setting.  Fix
$(k_1,k_2,k_3)\in\mathbb Z_{\geq0}^3$ and consider the generalized Markov
equation
\begin{equation}\label{eq:kgm}
 x^2+y^2+z^2+k_1yz+k_2zx+k_3xy
 =(3+k_1+k_2+k_3)xyz.
\end{equation}
The positive integers which occur as components of positive integer solutions
of \eqref{eq:kgm} are called generalized Markov numbers.  The purpose of this
paper is to construct an analogous word-theoretic framework for these numbers.

Generalized Cohn matrices were introduced by Gyoda and Maruyama as
matrixizations of generalized Markov numbers \cite{gyo-maru}.  They play the
role of Cohn matrices in the generalized setting.  However, their recursive
construction is not described by matrix multiplication alone; it also involves
the addition of a constant matrix.  Thus, unlike the classical Cohn matrices,
they are not obtained simply by evaluating words as products of fixed letter
matrices.

A first possible approach would be to generalize Cohn words directly.  Namely,
one might try to attach to each positive rational slope a word whose ordinary
matrix product gives the corresponding generalized Cohn matrix.  The preceding
observation shows why this direct approach does not work.  The desired words
therefore cannot be a straightforward deformation of the classical Cohn words.

The main idea of this paper is to use a different family of words.  Let
$\mathfrak F_3$ be the free group generated by $\{x,y,z\}$.  Consider the
rooted planar binary tree with vertices in $\mathfrak F_3^3$ defined as
follows.  The root is $
 (x,y,z),$ and if a vertex is $(a,b,c)$, then its left and right children are
\[
 (a,bcb^{-1},b),\qquad (b,b^{-1}ab,c),
\]
respectively.  For a vertex $v$ of this tree, we denote by $\omega_v$ the
middle entry of the corresponding triple.  This recursively defined family of
words is the basic combinatorial object used throughout the paper.

These words also admit a geometric realization by line segments of rational
slopes.  In this realization, the word represented by the segment of slope
$t$ is denoted by $\omega_t$.  The word $\omega_t$ serves as a common
source for two constructions: a local substitution rule recovers the classical
Cohn word, while a matrix evaluation gives the Markov--monodromy matrix in the
generalized setting.

We first describe the matrix evaluation. We substitute
parameter-dependent matrices $X,Y,Z$ for the letters $x,y,z$ and write
\[
 M_t:=\omega_t\big|_{x\mapsto X,\;y\mapsto Y,\;z\mapsto Z}.
\]
This matrix $M_t$ is the Markov--monodromy matrix attached to the slope
$t$ for the fixed parameters $k_1,k_2,k_3$.  It is a matrix refinement of
the generalized Markov number at $t$: in the normalization used in this
paper, the generalized Markov number can be recovered from a specified entry of
$M_t$.  Thus the word $\omega_t$ records the combinatorial pattern attached
to the slope $t$, while the dependence on the fixed parameters
$k_1,k_2,k_3$ is carried by the matrices $X,Y,Z$ used in the evaluation.

The other construction relates $\omega_t$ to the classical Cohn word.  The
Cohn word attached to a positive rational slope $t$ is a classical object
independent of the parameters in the generalized Markov equation.  In our
framework, it is obtained from $\omega_t$ by an explicit local substitution
rule.  This gives a direct word-theoretic bridge between the recursively
defined words above and the classical Cohn words.

The relation with generalized Cohn matrices is obtained from an
endpoint-completed version of the same line-segment realization.  We denote the
corresponding word by $\overline{\omega}_t$.  At the level of words, this
completion is especially simple:
\[
 \overline{\omega}_t=xyz\omega_t^{-1}.
\]
Evaluating this word by the same matrices gives
\[
 \overline M_t
 :=\overline{\omega}_t\big|_{x\mapsto X,\;y\mapsto Y,\;z\mapsto Z}.
\]
The precise convention and the proof of the identity
$\overline{\omega}_t=xyz\omega_t^{-1}$ are given in
Section~\ref{sec:gc-extended}.

On the other hand, the same slope and parameters determine a strongly
admissible sequence, and hence a generalized Cohn matrix $C_t$.  The main
comparison theorem states that $\overline M_t$ and $C_t$ are related by a
fixed sign correction.  Thus, although generalized Cohn matrices are not
generated by a purely multiplicative word rule, the endpoint-completed word
$\overline{\omega}_t$ still contains enough information to determine them up
to this fixed correction.

The paper is organized as follows.  Section~\ref{sec:gm-cf} recalls the
combinatorial background for rational slopes, generalized Markov numbers,
characteristic numbers, generalized Markov sequences, and fence posets.
Section~\ref{sec:mm-words} defines the words $\omega_v$ by using the word
tree and proves their geometric realization by line segments of rational
slopes.  Section~\ref{sec:mm-matrices} assigns matrices to these words and
fixes the convention used throughout the paper.  Section~\ref{sec:mm-cohn}
explains how the classical Cohn word is obtained from $\omega_t$.
Section~\ref{sec:gc-extended} introduces generalized Cohn matrices and proves
that they are obtained from $\overline{\omega}_t$.

\subsection*{Acknowledgements}
This work was supported by JSPS KAKENHI Grant Number JP25K17224.
\section{Preliminaries on generalized Markov numbers and fence posets}\label{sec:gm-cf}

\subsection{Farey tree}
We define the \emph{Farey tree} $\mathrm{F}\mathbb T$ as follows:
\begin{itemize}\setlength{\leftskip}{-10pt}
\item[(1)] The root vertex is $\left(\frac{0}{1},\frac{1}{1},\frac{1}{0}\right)$.
\item[(2)] Each vertex $\left(\frac{a}{b},\frac{c}{d},\frac{e}{f}\right)$ has the following two children:
\[\begin{xy}(0,0)*+{\left(\dfrac{a}{b},\dfrac{c}{d},\dfrac{e}{f}\right)}="1",(-30,-15)*+{\left(\dfrac{a}{b},\dfrac{a}{b}\oplus\dfrac{c}{d},\dfrac{c}{d}\right)}="2",(30,-15)*+{\left(\dfrac{c}{d},\dfrac{c}{d}\oplus\dfrac{e}{f},\dfrac{e}{f}\right).}="3", \ar@{-}"1";"2"\ar@{-}"1";"3"
\end{xy}\]
where $\frac{a}{b}\oplus\frac{c}{d}=\frac{a+c}{b+d}$.
\end{itemize}
We write
\[
\mathbb Q_{\ge0}^{\mathrm{ext}}:=\mathbb Q_{\ge0}\cup\{\infty\},
\]
and regard $\frac{0}{1}$ and $\frac{1}{0}$ as the boundary cases.  An element of
$\mathbb Q_{\ge0}^{\mathrm{ext}}$ is represented by a reduced fraction $\frac{p}{q}$ with
$p,q\ge0$, $\gcd(p,q)=1$, where $\frac{1}{0}$ represents $\infty$.

The first few vertices of $\mathrm{F}\mathbb T$ are as follows:
\relsize{+1}
\begin{align*}
\begin{xy}(0,0)*+{\left(\frac{0}{1},\frac{1}{1},\frac{1}{0}\right)}="1",(20,-14)*+{\left(\frac{0}{1},\frac{1}{2},\frac{1}{1}\right)}="2",(20,14)*+{\left(\frac{1}{1},\frac{2}{1},\frac{1}{0}\right)}="3",
(50,-24)*+{\left(\frac{0}{1},\frac{1}{3},\frac{1}{2}\right)}="4",(50,-8)*+{\left(\frac{1}{2},\frac{2}{3},\frac{1}{1}\right)}="5",(50,8)*+{\left(\frac{1}{1},\frac{3}{2},\frac{2}{1}\right)}="6",(50,24)*+{\left(\frac{2}{1},\frac{3}{1},\frac{1}{0}\right)}="7",(85,-28)*+{\left(\frac{0}{1},\frac{1}{4},\frac{1}{3}\right)\cdots}="8",(85,-20)*+{\left(\frac{1}{3},\frac{2}{5},\frac{1}{2}\right)\cdots}="9",(85,-12)*+{\left(\frac{1}{2},\frac{3}{5},\frac{2}{3}\right)\cdots}="10",(85,-4)*+{\left(\frac{2}{3},\frac{3}{4},\frac{1}{1}\right)\cdots}="11",(85,4)*+{\left(\frac{1}{1},\frac{4}{3},\frac{3}{2}\right)\cdots}="12",(85,12)*+{\left(\frac{3}{2},\frac{5}{3},\frac{2}{1}\right)\cdots}="13",(85,20)*+{\left(\frac{2}{1},\frac{5}{2},\frac{3}{1}\right)\cdots}="14",(85,28)*+{\left(\frac{3}{1},\frac{4}{1},\frac{1}{0}\right)\cdots}="15",\ar@{-}"1";"2"\ar@{-}"1";"3"\ar@{-}"2";"4"\ar@{-}"2";"5"\ar@{-}"3";"6"\ar@{-}"3";"7"\ar@{-}"4";"8"\ar@{-}"4";"9"\ar@{-}"5";"10"\ar@{-}"5";"11"\ar@{-}"6";"12"\ar@{-}"6";"13"\ar@{-}"7";"14"\ar@{-}"7";"15"
\end{xy}
\end{align*}
\relsize{-1}
\begin{proposition}[{\cite[Section~3.2]{aig}}]\label{prop:property-farey}
The following hold.
\begin{itemize}\setlength{\leftskip}{-10pt}
    \item[(1)] If $\left(r,t,s\right)$ is a Farey triple, then so are $\left(r,r\oplus t,t\right)$ and $\left(t,t\oplus s,s\right)$. In particular, every vertex in $\mathrm{F}\mathbb T$ is a Farey triple.
    \item[(2)] For every irreducible fraction $t \in (0,\infty)$, there exists a unique Farey triple $F$ in $\mathrm{F}\mathbb T$ such that $t$ is the second entry of $F$.
    \item[(3)] For $\left(r,t,s\right)$ in $\mathrm{F}\mathbb T$, the inequalities $r<t<s$ hold.
\end{itemize}
\end{proposition}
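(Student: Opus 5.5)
Throughout, I take a \emph{Farey triple} to mean a triple $\left(\frac{a}{b},\frac{c}{d},\frac{e}{f}\right)$ of reduced fractions in $\mathbb Q_{\ge0}^{\mathrm{ext}}$ with $c=a+e$, $d=b+f$ and $\det\begin{pmatrix} a& e\\ b& f\end{pmatrix}=af-be=-1$, i.e.\ $be-af=1$. The plan is to handle all three items with this determinant condition. Item (1) will then be a direct computation. Item (3) follows from (1) by induction along the tree. Item (2) is the Stern--Brocot existence and uniqueness statement, for which I would use a descent argument.

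For (1), let $(r,t,s)=\left(\frac ab,\frac{a+e}{b+f},\frac ef\right)$ with $be-af=1$. The left child is $\left(\frac ab,\frac{2a+e}{2b+f},\frac{a+e}{b+f}\right)$. Its middle entry is the mediant of its outer entries, and
\[
b(a+e)-a(b+f)=be-af=1 .
\]
The right child $\left(\frac{a+e}{b+f},\frac{a+2e}{b+2f},\frac ef\right)$ is handled the same way, since $(b+f)e-(a+e)f=1$. Reducedness of every entry follows from the determinant condition: any common divisor of $a$ and $b$ divides $be-af=1$. Since the root $\left(\frac01,\frac11,\frac10\right)$ has $1\cdot1-0\cdot0=1$, induction on depth shows that every vertex is a Farey triple.

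For (3), the condition $be-af=1>0$ gives $\frac ab<\frac ef$, with the convention $\frac10=\infty$ when $f=0$. For nonnegative entries the mediant lies strictly between the two fractions, because
\[
(a+e)b-a(b+f)=1>0 \quad\text{and}\quad e(b+f)-(a+e)f=1>0 .
\]
Hence $r<t<s$.

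For (2), existence goes by descent. Given a reduced $t=\frac pq\in(0,\infty)$, start at the root and at each vertex $(r,t',s)$ compare $t$ with $t'$. If they are equal we stop; if $t<t'$ we go left; if $t>t'$ we go right. By (3), $t$ stays in the open interval between the outer entries at every step. The key claim, which I expect to be the main obstacle, is that the process terminates. Suppose $r=\frac ab<\frac pq<\frac ef=s$ with $be-af=1$. Then $bp-aq\ge1$ and $eq-fp\ge1$, and
\[
p(b+f)+q(a+e)\;\ge\;\ldots
\]
More cleanly, one has the identity
\[
p=(eq-fp)\,a+(bp-aq)\,e,\qquad q=(eq-fp)\,b+(bp-aq)\,f ,
\]
obtained by solving a linear system with determinant $1$. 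Hence $p+q\ge(a+b)+(e+f)$, which is the sum of the numerator and denominator of the mediant. Along the descent the quantity (numerator $+$ denominator) of the middle entry strictly increases, so it must reach $t$ after at most $p+q$ steps.

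Uniqueness uses the intervals. By (3), the two children of a vertex $(r,t',s)$ have middle entries in the disjoint open intervals $(r,t')$ and $(t',s)$. Each vertex's middle entry lies in its ancestors' open intervals and differs from the ancestors' middle entries. An induction on depth then shows that distinct vertices have distinct middle entries: take the first point where the paths to two vertices diverge. Their middle entries then lie in disjoint intervals, or one of them is the branching vertex's own middle entry, which is excluded from both intervals.
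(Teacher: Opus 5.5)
The paper does not prove this proposition; it quotes it from \cite[Section~3.2]{aig}. Your argument is correct and is the standard Farey-neighbour argument behind that reference. Both child operations preserve $be-af=1$, which gives reducedness and $r<t<s$. The identity $p=(eq-fp)a+(bp-aq)e$, $q=(eq-fp)b+(bp-aq)f$ forces the descent to terminate, and nested intervals give uniqueness.

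Only minor tidying is needed:
\begin{itemize}
\item State explicitly that $be-af=1$ with nonnegative entries forces $b\ge1$ (with $b=e=1$ when $f=0$) and $e+f\ge1$. You use this silently to get $bp-aq\ge1$, the boundary case $s=\frac10$ in (3), and the strict increase of numerator plus denominator along the descent.
\item Delete the unfinished display $p(b+f)+q(a+e)\ge\ldots$.
\item The paper never defines ``Farey triple''. Your determinant definition is Aigner's and is a reasonable reading, and your proof of (1) shows it is consistent with $\mathrm{F}\mathbb T$.
\end{itemize}
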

\subsection{\texorpdfstring{$(k_1,k_2,k_3,\sigma)$-GM numbers and characteristic numbers}{(k1,k2,k3;sigma)-GM numbers and characteristic numbers}}
From now on, fix an ordered triple $(k_1,k_2,k_3)\in\mathbb Z_{\geq0}^3$.  The \emph{$(k_1,k_2,k_3)$-generalized Markov equation} is
\[
x^2 + y^2 + z^2 + k_1 yz + k_2 zx + k_3 xy = (3 + k_1 + k_2 + k_3) xyz.
\]
We abbreviate this equation as the $(k_1,k_2,k_3)$-GM equation.

A \emph{$(k_1,k_2,k_3)$-GM triple} is any permutation of a positive integer solution of this equation.  A positive integer appearing in a $(k_1,k_2,k_3)$-GM triple is called a \emph{$(k_1,k_2,k_3)$-GM number}.

To keep track of the position label carried by each $(k_1,k_2,k_3)$-GM number, we use a labeled version of the generalized Markov tree.  Let $\mathfrak S_3$ be the symmetric group on three letters, acting on $\{1,2,3\}$ from the left.  For an ordered triple $(x_1,x_2,x_3)$ and $\tau\in\mathfrak S_3$, we write $\tau(x_1,x_2,x_3)=(x_{\tau(1)},x_{\tau(2)},x_{\tau(3)})$.  Thus, in $\tau(a,b,c)$, the entries $a,b,c$ occupy the positions $\tau^{-1}(1),\tau^{-1}(2),\tau^{-1}(3)$, respectively.  For $\sigma\in\mathfrak S_3$, the \emph{$(k_1,k_2,k_3,\sigma)$-generalized Markov tree}, abbreviated the \emph{$(k_1,k_2,k_3,\sigma)$-GM tree}, is the tree $\mathrm{M}\mathbb T(k_1,k_2,k_3,\sigma)$ defined as follows:
\begin{itemize}\setlength{\leftskip}{-10pt}
\item [(1)] The root vertex is
\[
\big((1,\sigma(1)), (k_{\sigma(2)}+2, \sigma(2)), (1,\sigma(3))\big).
\]
\item [(2)] Every vertex $((a,h),(b,i),(c,j))$ has the following two children:
\[
\begin{xy}
(0,0)*+{((a,h),(b,i),(c,j))}="1",
(-40,-15)*+{\left((a,h),\left(\frac{a^2+k_j ab+b^2}{c},j\right),(b,i)\right)}="2",
(40,-15)*+{\left((b,i),\left(\frac{b^2+k_h bc+c^2}{a},h\right),(c,j)\right).}="3",
\ar@{-}"1";"2"\ar@{-}"1";"3"
\end{xy}
\]
\end{itemize}

\begin{example}\label{ex:CMT(1,2,0,id)}
The first few vertices of $\mathrm{M}\mathbb T(1,2,0;\mathrm{id})$ are as follows:
\begin{align*}
\begin{xy}
(10,0)*+{((1,1),(4,2),(1,3))}="1",
(25,16)*+{((4,2),(21,1),(1,3))}="2",
(25,-16)*+{((1,1),(17,3),(4,2))}="3",
(65,24)*+{((21,1),(121,2),(1,3))}="4",
(65,8)*+{((4,2),(457,3),(21,1))}="5",
(65,-8)*+{((17,3),(373,1),(4,2))}="6",
(65,-24)*+{((1,1),(81,2),(17,3))}="7",
(120,28)*+{((121,2),(703,1),(1,3))\cdots}="8",
(120,20)*+{((21,1),(15082,3),(121,2))\cdots}="9",
(120,12)*+{((457,3),(57121,2),(21,1))\cdots}="10",
(120,4)*+{((4,2),(10033,1),(457,3))\cdots}="11",
(120,-4)*+{((373,1),(8185,3),(4,2))\cdots}="12",
(120,-12)*+{((17,3),(38025,2),(373,1))\cdots}="13",
(120,-20)*+{((81,2),(8227,1),(17,3))\cdots}="14",
(120,-28)*+{((1,1),(386,3),(81,2))\cdots}="15",
\ar@{-}"1";"2"\ar@{-}"1";"3"
\ar@{-}"2";"4"\ar@{-}"2";"5"
\ar@{-}"3";"6"\ar@{-}"3";"7"
\ar@{-}"4";"8"\ar@{-}"4";"9"
\ar@{-}"5";"10"\ar@{-}"5";"11"
\ar@{-}"6";"12"\ar@{-}"6";"13"
\ar@{-}"7";"14"\ar@{-}"7";"15"
\end{xy}
\end{align*}
\end{example}

We recall that the labeled GM tree gives the desired parametrization of
position-labeled generalized Markov triples.
\begin{theorem}[{\cite{gyomatsu}}]\label{thm:all-solution}
Let $(a,b,c)$ be a $(k_1,k_2,k_3)$-GM triple with $b> \max\{a,c\}$, and assume $\tau(a,b,c)$ is a solution to the $(k_1,k_2,k_3)$-GM equation for $\tau\in \mathfrak S_3$. Then there exists a unique pair $\{\sigma,\sigma^{*}\}\subset \mathfrak S_3$ and a unique pair $(v,v^\ast)$ such that $v\in \mathrm{M}\mathbb T(k_1,k_2,k_3,\sigma)$ and $v^\ast\in \mathrm{M}\mathbb T(k_1,k_2,k_3,\sigma^\ast)$, and $v$ and $v^\ast$ are permutations of $((a,\tau^{-1}(1)),(b,\tau^{-1}(2)),(c,\tau^{-1}(3)))$.
\end{theorem}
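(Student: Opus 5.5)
The proof will be a Vieta-jumping descent, run alongside a check that the labels are carried correctly. I would first record two facts about the labeled tree $\mathrm{M}\mathbb T(k_1,k_2,k_3,\sigma)$, both by induction from the root. Every vertex $((a,h),(b,i),(c,j))$ satisfies: $\{h,i,j\}=\{1,2,3\}$; the entry with label $\ell$, placed in coordinate $\ell$, gives a solution of the $(k_1,k_2,k_3)$-GM equation; and the middle entry is strictly the largest. For the root, $(1,k_{\sigma(2)}+2,1)$ placed according to $\sigma$ is a solution, which is a direct check. For the inductive step, the GM equation is quadratic in the variable with label $j$, with the two roots having product $(\text{other}_1^2+k_j\,\text{other}_1\,\text{other}_2+\text{other}_2^2)$. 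So replacing $c$ by $(a^2+k_jab+b^2)/c$ is exactly the Vieta involution in coordinate $j$, and it preserves integrality and positivity.

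Next I would show that the new middle entry exceeds $a$ and $b$. Write the equation in coordinate $j$ as $X^2-\bigl((3+\sum k)ab-k_{\cdot}a-k_{\cdot}b\bigr)X+(a^2+k_jab+b^2)=0$, with roots $c$ and $c'$. Evaluating this quadratic at $X=b$ (with $b>a$, and $b>c$ from the inductive hypothesis) gives a negative value, so the other root $c'$ is larger than $b$. The right child is handled symmetrically.

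The core is the descent for the converse. Given a solution $\tau(a,b,c)$ with $b>\max\{a,c\}$, apply the Vieta involution in the coordinate occupied by $b$, namely $\tau^{-1}(2)$, replacing $b$ by $b'=(a^2+k\,ac+c^2)/b$. The same evaluation of the quadratic shows $b'<\max\{a,c\}$ unless the triple is a root-type solution. Iterating therefore strictly decreases the maximum, so the process terminates. It ends at a solution where the reduction no longer decreases the maximum. The main obstacle is classifying these minimal solutions: I would show they must have the form $(1,k_\ell+2,1)$ with the large entry in coordinate $\ell$, by a direct inequality argument on small values (setting $a=c=1$ is forced by evaluating the quadratic at $\max\{a,c\}$). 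I would also handle the degenerate triples with repeated entries, such as $(1,1,1)$ and $(1,1,k+2)$, which sit below the root.

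Reversing the descent path gives a path from a root to $v$. At the first step of reversal the largest entry sits in the middle, and its neighbours $(1,\cdot)$ and $(1,\cdot)$ can be ordered in two ways. This yields exactly the two choices $\sigma$ and $\sigma^\ast$, which differ by the transposition of $\sigma(1)$ and $\sigma(3)$, and mirror-image vertices $v$, $v^\ast$. Every later step is forced: at each vertex the next entry must be the new maximum and go in the middle, and the left/right choice is determined by which outer entry was replaced. Uniqueness of the pair then follows because each solution with a strict maximum has a unique parent (the descent step is deterministic), and distinct tree paths give distinct labeled triples by the monotonicity shown above.
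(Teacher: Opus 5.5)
The paper does not prove this theorem; it quotes it from \cite{gyomatsu} and uses it as a black box, so there is no in-paper argument to compare with. Your Vieta-jumping descent is the standard route to this statement, and its outline is sound. The forward induction is correct: labels stay a permutation, each child is the Vieta involution in the coordinate of the replaced entry, and evaluating the quadratic at the old maximum shows the new middle entry is the strict maximum. The stopping criterion is also correct: evaluating at $\max\{a,c\}$ gives a value $\le 0$, with equality only for $a=c=1$, and this forces $b=k_\ell+2$. So is the observation that $\mathrm{M}\mathbb T(k_1,k_2,k_3,\sigma\circ(1\,3))$ is the left--right mirror of $\mathrm{M}\mathbb T(k_1,k_2,k_3,\sigma)$. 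Two points still need to be filled in.

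First, you jump at the maximum, so every intermediate triple must have a \emph{strict} maximum. Since $b'<\max\{a,c\}$, this can fail only if some solution has two equal entries $a=c>1$. Your evaluation at $\max\{a,c\}$ does not exclude this: there it is strictly negative, so the descent would move on to $\{a,a,b'\}$, which has a tie. The case is impossible, but it needs an argument. Suppose $x_h=x_j=a$. The two roots $b,b''$ in the remaining coordinate $i$ are positive integers with $bb''=(2+k_i)a^2$ and $b+b''=Ka^2-(k_h+k_j)a$. Then $(b-1)(b''-1)\ge 0$ gives $1-a^2-(k_h+k_j)(a^2-a)\ge 0$, hence $a=1$. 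You could instead cite Proposition~\ref{relatively-prime-generalized}, but \cite{gyomatsu} may derive coprimality from the tree itself, so the direct argument avoids circularity.

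Second, your uniqueness sentence appeals to ``monotonicity'', which is not the relevant fact. What you need is that the tree's parent map coincides with your descent. For a non-root vertex, jumping the middle entry recovers the parent, and the parent's middle is the larger outer entry of the vertex; it sits on the right exactly when the vertex is a left child. Non-root vertices have distinct outer entries, so a root-type triple occurs only at the root. Hence an arranged labeled triple determines its entire path to the root, including the arrangement of the root it reaches. This gives $\sigma(2)=\ell$, which excludes any $\sigma'\notin\{\sigma,\sigma^\ast\}$. It also shows $v^\ast\notin \mathrm{M}\mathbb T(k_1,k_2,k_3,\sigma)$, because the descent from $v^\ast$ ends at the mirrored root $((1,\sigma(3)),(k_{\sigma(2)}+2,\sigma(2)),(1,\sigma(1)))$, and $\sigma(1)\neq\sigma(3)$. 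With these two additions the proof is complete.
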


Before introducing characteristic numbers, we record the following basic
coprimality property.
\begin{proposition}[{\cite[Corollary~8]{gyomatsu}}]\label{relatively-prime-generalized}
For any $(k_1,k_2,k_3)$-GM triple $(a,b,c)$, any two of $a,b,c$ are coprime.
\end{proposition}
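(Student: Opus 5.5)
We argue by Vieta descent. Suppose, for contradiction, that some $(k_1,k_2,k_3)$-GM triple has two entries sharing a prime factor $p$. The equation is symmetric in its treatment of the three positions, so after relabeling the coefficients we may assume $p\mid a$ and $p\mid b$.

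The first step is to show that $p\mid c$ as well. Reducing the equation modulo $p$, every term involving $a$ or $b$ vanishes, which leaves $c^2\equiv 0 \pmod p$. Hence $p$ divides all three of $a,b,c$.

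Next we use the Vieta involution to move down the tree. Regard the equation as a quadratic in the largest entry, say $b$. Its other root $b'=(a^2+k\,ac+c^2)/b$ is again a positive integer solution; this is exactly the mutation used to build $\mathrm{M}\mathbb T$. We also have $b'=(3+k_1+k_2+k_3)ac-k'a-k''c-b$, so $p\mid b'$. The new triple therefore again has all three entries divisible by $p$.

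Theorem~\ref{thm:all-solution} says that every triple with a strict maximum lies in some $\mathrm{M}\mathbb T(k_1,k_2,k_3,\sigma)$ and is reached from the root by a finite chain of such mutations. Running this chain backwards, the property "$p$ divides all three entries" propagates down to a root vertex, whose entries include $1$. Since $p\nmid 1$, this is a contradiction.

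The main obstacle is the case where the triple has no strict maximum, which Theorem~\ref{thm:all-solution} does not cover. This has to be handled directly. If $b=\max$ is attained twice, say $a=b$, then $p\mid a$ already, and comparing sizes in the equation forces small values that can be checked by hand. Alternatively, one shows that ties occur only at root-type triples containing a $1$, and such a triple cannot have all entries divisible by $p$.
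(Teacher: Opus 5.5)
Your proof is correct. The paper gives no argument of its own here; it only cites \cite[Corollary~8]{gyomatsu}. The natural direct route is forward induction along $\mathrm{M}\mathbb T(k_1,k_2,k_3,\sigma)$:
\begin{itemize}
\item the root $(1,k_{\sigma(2)}+2,1)$ is pairwise coprime;
\item if $(a,b,c)$ is pairwise coprime and $cc'=a^2+k_jab+b^2$, then a prime dividing both $c'$ and $a$ would divide $b^2$, so the children stay pairwise coprime.
\end{itemize}
You run the same tree backwards with the stronger invariant ``$p$ divides all three entries''. Your reduction modulo $p$ is what makes this invariant available. The linear Vieta relation $x_i+x_i'=Kx_jx_k-k_jx_k-k_kx_j$ then carries it across every tree edge in either direction, so your restriction to flipping the largest entry is not needed. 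Both versions use Theorem~\ref{thm:all-solution} in the same way. Yours has the small bonus of showing that a common factor of two entries is automatically a common factor of all three.

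The one step you leave as a sketch, triples without a strict maximum, is needed by either version. It is settled by a short estimate. Suppose two positions $i,j$ carry the common maximum $m$ and position $l$ carries $c\le m$. Then the equation becomes
\[
m^2(Kc-2-k_l)=c^2+(k_i+k_j)mc\le m^2(1+k_i+k_j),
\]
so $Kc\le K$, i.e.\ $c=1$. Then $m^2(1+k_i+k_j)=1+(k_i+k_j)m$. This fails for $m\ge 2$, because the difference of the two sides is $(m^2-1)+(k_i+k_j)(m^2-m)>0$. Hence $(1,1,1)$ is the only GM triple without a strict maximum, and it is trivially pairwise coprime.

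Your phrase ``root-type triples'' is slightly off. The roots $(1,k_{\sigma(2)}+2,1)$ do have a strict maximum, and $(1,1,1)$ is not a vertex of any of the trees.
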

Let $(r,t,s)$ be a Farey triple, and write the corresponding vertex of
$\mathrm{M}\mathbb T(k_1,k_2,k_3,\sigma)$ as
\[
((m_r,i_r),(m_t,i_t),(m_s,i_s)).
\]
Here $m_r,m_t,m_s$ are the GM numbers attached to $r,t,s$, respectively,
and $i_r,i_t,i_s\in\{1,2,3\}$ are their position labels.  The
\emph{characteristic number} associated with $t$ is the unique integer $u_t$
satisfying
\[
\begin{cases}
m_r u_t \equiv m_s \pmod{m_t},\\
0<u_t<m_t.
\end{cases}
\]
The uniqueness follows from the pairwise coprimality in
Proposition~\ref{relatively-prime-generalized}.  Since the Farey triple
$(r,t,s)$ is uniquely determined by its middle component $t$ by
Proposition~\ref{prop:property-farey}(2), the integer $u_t$ depends only on
$t$.

We also use the coefficient selected by the position label of $m_t$.  With
$i_t$ as above, set
\[
k_t:=k_{i_t}.
\]

\subsection{Generalized Markov sequences from line segments}\label{subsec:gm-sequences-from-line-segments}
Let $\widetilde{\mathbb{R}^{2}}$ be the modified lattice.  We regard
$\widetilde{\mathbb{R}^{2}}$ as the planar graph obtained by adding, to the
square grid, all line segments of slope $-1$ connecting adjacent lattice
vertices.  For a positive reduced fraction
$t=\frac{p}{q}\in(0,\infty)\cap\mathbb Q$, let $L_t$ be the oriented segment
from $(0,0)$ to $(q,p)$.  Figure~\ref{fig:ex-presnakegraph} shows the case
$t=\frac{2}{5}$.

\begin{figure}[ht]
    \centering
    \begin{tikzpicture}[x=1.2cm,y=1.2cm]
    \definecolor{segmentred}{RGB}{255,0,0}
    \tikzset{
      figgrid/.style={black,line width=0.7pt,line cap=butt,line join=miter},
      figsegment/.style={draw=segmentred,line width=1.0pt,line cap=round,line join=round}
    }
    \draw[figgrid] (0,0) -- (3,0);
    \draw[figgrid] (0,1) -- (5,1);
    \draw[figgrid] (2,2) -- (5,2);
    \draw[figgrid] (0,0) -- (0,1);
    \draw[figgrid] (1,0) -- (1,1);
    \draw[figgrid] (2,0) -- (2,2);
    \draw[figgrid] (3,0) -- (3,2);
    \draw[figgrid] (4,1) -- (4,2);
    \draw[figgrid] (5,1) -- (5,2);
    \draw[figgrid] (0,1) -- (1,0);
    \draw[figgrid] (1,1) -- (2,0);
    \draw[figgrid] (2,1) -- (3,0);
    \draw[figgrid] (2,2) -- (3,1);
    \draw[figgrid] (3,2) -- (4,1);
    \draw[figgrid] (4,2) -- (5,1);
    \draw[figsegment] (0,0) -- (5,2);
    \end{tikzpicture}
    \caption{Line segment $L_{\frac{2}{5}}$}
    \label{fig:ex-presnakegraph}
\end{figure}

We define the \emph{$(k_1,k_2,k_3,\sigma)$-generalized Markov sequence}
\[
s^{\circ}_{k_1,k_2,k_3,\sigma}(t)=(a_1,\dots,a_\ell)
\]
by a sign assignment along $L_t$.  The segment is oriented from left to right.  Write $L_t^\circ$ for $L_t$ with its endpoints removed.  A triangle is counted as crossed when the open segment $L_t^\circ$ meets the interior of the triangle.

First, assign a sign to each right-angled triangle crossed by $L_t$.  The sign is $-$ for the leftmost crossed triangle and for any crossed triangle whose left-hand side, cut by $L_t$, is a quadrilateral.  All other crossed triangles receive the sign $+$; see Figures~\ref{fig:minus-righttriangles} and~\ref{fig:plus-righttriangles}.

\begin{figure}[ht]
    \centering
\begin{tikzpicture}
\draw (0,0) -- (1,0) -- (0,1) -- cycle;
\draw[red] (0,0) -- (0.6,0.6);
\end{tikzpicture}
\hspace{0.7cm}
\begin{tikzpicture}[baseline=0mm]
\draw (0,0) -- (1,0) -- (0,1) -- cycle;
\draw[red] (0,-0.1) -- (1.1,0.4);
\end{tikzpicture}\hspace{0.4cm}
\begin{tikzpicture}[baseline=0mm]
\draw (1,0) -- (1,1) -- (0,1) -- cycle;
\draw[red] (1.1,0.6) -- (0.6,0.3);
\end{tikzpicture}
    \caption{Right-angled triangles with $-$}
    \label{fig:minus-righttriangles}
\end{figure}

\begin{figure}[ht]
    \centering
\rotatebox{180}{\begin{tikzpicture}[baseline=0mm]
\draw (1,0) -- (1,1) -- (0,1) -- cycle;
\draw[red] (1.1,0.6) -- (0.6,0.3);
\end{tikzpicture}}
\hspace{0.3cm}
\rotatebox{180}{
\begin{tikzpicture}[baseline=0mm]
\draw (0,0) -- (1,0) -- (0,1) -- cycle;
\draw[red] (0,-0.1) -- (1.1,0.4);
\end{tikzpicture}}\hspace{0.5cm}
\rotatebox{180}{\begin{tikzpicture}
\draw (0,0) -- (1,0) -- (0,1) -- cycle;
\draw[red] (0,0) -- (0.6,0.6);
\end{tikzpicture}}
    \caption{Right-angled triangles with $+$}
    \label{fig:plus-righttriangles}
\end{figure}

Second, assign a sign to each edge whose interior intersects $L_t$.  A horizontal edge contributes $k_{\sigma(1)}$ copies of that sign, a diagonal edge contributes $k_{\sigma(2)}$ copies, and a vertical edge contributes $k_{\sigma(3)}$ copies.  The sign is $-$ if the midpoint of the edge is not on the right-hand side of the oriented segment $L_t$, and it is $+$ if the midpoint is on the right-hand side; the phrase ``not on the right-hand side'' includes the case where the midpoint lies on $L_t$ itself.  See Figures~\ref{fig:minus-sign-edge} and~\ref{fig:plus-sign-edge}.

\begin{figure}[ht]
    \centering
\begin{tikzpicture}[baseline=0mm]
\draw (-0.5,0) -- (0.5,0);
\fill (0,0) circle (1.5pt);
\draw[red] (-0.5,-0.3) -- (0.5,0.3);
\end{tikzpicture}
\hspace{0.7cm}
\begin{tikzpicture}[baseline=0mm]
\draw (-0.5,0) -- (0.5,0);
\fill (0,0) circle (1.5pt);
\draw[red] (-0.5,-0.3) -- (0.5,0.1);
\end{tikzpicture}
\hspace{0.7cm}
\begin{tikzpicture}[baseline=0mm]
\draw (0,-0.5) -- (0,0.5);
\fill (0,0) circle (1.5pt);
\draw[red] (-0.5,-0.3) -- (0.5,0.3);
\end{tikzpicture}\hspace{0.6cm}
\begin{tikzpicture}[baseline=0mm]
\draw (0,-0.5) -- (0,0.5);
\fill (0,0) circle (1.5pt);
\draw[red] (-0.5,-0.3) -- (0.5,0);
\end{tikzpicture}\hspace{0.6cm}
\begin{tikzpicture}[baseline=0mm]
\draw (-0.5,0.5) -- (0.5,-0.5);
\fill (0,0) circle (1.5pt);
\draw[red] (-0.5,-0.3) -- (0.5,0.3);
\end{tikzpicture}\hspace{0.6cm}
\begin{tikzpicture}[baseline=0mm]
\draw (-0.5,0.5) -- (0.5,-0.5);
\fill (0,0) circle (1.5pt);
\draw[red] (-0.5,-0.3) -- (0.5,0);
\end{tikzpicture}
    \caption{Edges with $-$}
    \label{fig:minus-sign-edge}
\end{figure}

\begin{figure}[ht]
    \centering
\begin{tikzpicture}[baseline=0mm]
\draw (-0.5,0) -- (0.5,0);
\fill (0,0) circle (1.5pt);
\draw[red] (-0.5,-0.1) -- (0.5,0.3);
\end{tikzpicture}
\hspace{0.7cm}
\begin{tikzpicture}[baseline=0mm]
\draw (0,-0.5) -- (0,0.5);
\fill (0,0) circle (1.5pt);
\draw[red] (-0.5,-0) -- (0.5,0.3);
\end{tikzpicture}\hspace{0.6cm}
\begin{tikzpicture}[baseline=0mm]
\draw (-0.5,0.5) -- (0.5,-0.5);
\fill (0,0) circle (1.5pt);
\draw[red] (-0.5,-0) -- (0.5,0.3);
\end{tikzpicture}
    \caption{Edges with $+$}
    \label{fig:plus-sign-edge}
\end{figure}

Finally, record the sign of a triangle when $L_t$ passes through its interior, and record the sign of an edge when $L_t$ crosses that edge.  These events are ordered by their positions along the oriented segment.  Compress the resulting sign string into maximal consecutive blocks of equal signs.  If the block lengths are $a_1,\dots,a_\ell$, set
\[
s^{\circ}_{k_1,k_2,k_3,\sigma}(t):=(a_1,\dots,a_\ell).
\]

The following example will also be used after the fence-poset notation is introduced.

\begin{example}\label{ex:gm-sequence-two-fifths-before-fence}
Take $(k_1,k_2,k_3)=(1,2,0)$, $\sigma=\mathrm{id}$, and $t=\frac{2}{5}$.  The segment $L_t$ carries the signs shown in Figure~\ref{fig:Lt-signed}.  Since horizontal edges contribute one copy of their sign, diagonal edges contribute two copies, and vertical edges contribute no copies, the sign string obtained by reading from left to right is
\[
\underbrace{-\,-\,-\,-}_{4}\,
\underbrace{+\,+\,+}_{3}\,
\underbrace{-}_{1}\,
\underbrace{+\,+\,+\,+}_{4}\,
\underbrace{-\,-\,-\,-\,-}_{5}\,
\underbrace{+}_{1}\,
\underbrace{-\,-\,-}_{3}\,
\underbrace{+\,+\,+\,+}_{4}.
\]
Therefore the corresponding $(1,2,0,\mathrm{id})$-generalized Markov sequence is
\[
 s^{\circ}_{1,2,0,\mathrm{id}}\left(\frac{2}{5}\right)
 =(4,3,1,4,5,1,3,4).
\]
This illustrates the sign assignment before the endpoint completion used later.

\begin{figure}[ht]
\centering
\begin{tikzpicture}[x=0.01cm,y=0.01cm,scale=0.5]
\definecolor{annred}{RGB}{255,0,0}
\tikzset{
  gridline/.style={black,line width=0.7pt,line cap=butt,line join=miter},
  gammaL/.style={draw=annred,line width=1.1pt,line cap=round,line join=round},
  midpoint/.style={circle,fill=gray,inner sep=0pt,minimum size=1pt},
  enddot/.style={circle,fill=black,inner sep=0pt,minimum size=1.8pt},
  triwhite/.style={circle,fill=white,draw=none,inner sep=0.45pt,minimum size=8.0pt,outer sep=0pt},
  edgewhite/.style={circle,fill=white,draw=none,inner sep=0.35pt,minimum size=7.6pt,outer sep=0pt},
  trisign/.style={text=annred,font=\scriptsize\bfseries,inner sep=0.45pt,minimum size=5.2pt,outer sep=0pt},
  edgesign/.style={text=annred,font=\scriptsize\bfseries,inner sep=0.35pt,minimum size=4.9pt,outer sep=0pt}
}
\begin{scope}[shift={(300,170)}, x={(260,0)}, y={(0,260)}]
\begin{scope}
  \clip (0,0) rectangle (5,2);
  \foreach \i in {0,...,4}{
    \foreach \j in {0,1}{
      \draw[gridline] (\i,{\j+1}) -- ({\i+1},\j);
    }
  }
  \foreach \i in {0,...,5}{
    \draw[gridline] (\i,0) -- (\i,2);
  }
  \foreach \j in {0,1,2}{
    \draw[gridline] (0,\j) -- (5,\j);
  }
  \foreach \i in {0,...,4}{
    \foreach \j in {0,1,2}{
      \node[midpoint] at ({\i+0.5},\j) {};
    }
  }
  \foreach \i in {0,...,5}{
    \foreach \j in {0,1}{
      \node[midpoint] at (\i,{\j+0.5}) {};
    }
  }
  \foreach \i in {0,...,4}{
    \foreach \j in {0,1}{
      \node[midpoint] at ({\i+0.5},{\j+0.5}) {};
    }
  }
\end{scope}
\coordinate (A) at (0,0);
\coordinate (B) at (5,2);
\draw[gammaL] (A) -- (B);
\foreach \x/\y/\sig in {0.20/0.18/-,0.78/0.78/-,1.20/0.22/+,1.73/0.58/-,2.18/0.55/+,2.70/0.70/+,2.30/1.18/-,2.78/1.40/-,3.05/1.18/+,3.82/1.80/-,4.18/1.60/+,4.75/1.85/+}{%
  \node[triwhite] at (\x,\y) {};
}
\foreach \x/\y/\sig in {0.43/0.40/-,0.65/0.40/-,1.35/0.48/+,1.57/0.48/+,2.36/0.54/+,2.58/0.54/+,2.58/1.00/-,2.32/1.47/-,2.54/1.47/-,3.46/1.40/-,3.68/1.40/-,4.42/1.60/+,4.64/1.60/+}{%
  \node[edgewhite] at (\x,\y) {};
}
\foreach \x/\y/\sig in {0.20/0.18/-,0.78/0.78/-,1.20/0.22/+,1.73/0.58/-,2.18/0.55/+,2.70/0.70/+,2.30/1.18/-,2.78/1.40/-,3.05/1.18/+,3.82/1.80/-,4.18/1.60/+,4.75/1.85/+}{%
  \node[trisign] at (\x,\y) {$\sig$};
}
\foreach \x/\y/\sig in {0.43/0.40/-,0.65/0.40/-,1.35/0.48/+,1.57/0.48/+,2.36/0.54/+,2.58/0.54/+,2.58/1.00/-,2.32/1.47/-,2.54/1.47/-,3.46/1.40/-,3.68/1.40/-,4.42/1.60/+,4.64/1.60/+}{%
  \node[edgesign] at (\x,\y) {$\sig$};
}
\node[enddot] at (A) {};
\node[enddot] at (B) {};
\end{scope}
\end{tikzpicture}
\caption{The segment $L_t$ for $t=\frac{2}{5}$ with the signs assigned by the triangle-crossing and edge-crossing rules.}
\label{fig:Lt-signed}
\end{figure}
\end{example}

\subsection{Fence posets and order ideals}\label{subsec:fence-posets}
We record the poset notation associated with finite integer sequences.  This notation will be used later to express the entries of Markov-monodromy matrices in terms of order ideals.

Let $(P,\preceq)$ be a finite poset.  For $x,y\in P$, we write $x\lessdot y$ if $x\prec y$ and there is no $z\in P$ such that $x\prec z\prec y$.  This relation is called the \emph{cover relation}.  The \emph{Hasse diagram} of $P$ is the graph whose vertex set is $P$ and whose edges are the unordered pairs $\{x,y\}$ such that either $x\lessdot y$ or $y\lessdot x$.  It is drawn so that the larger element is placed above the smaller one.

\begin{definition}
A finite poset is called a \emph{fence poset} if the underlying undirected graph of its Hasse diagram is either empty or a path; in particular, the empty poset and the one-element poset are allowed.
\end{definition}

Let $S=(a_1,\dots,a_n)$ be a finite sequence of positive integers.  Put
\[
s_j:=\sum_{i=1}^j a_i \qquad (j=1,\dots,n),
\]
and set $s_0:=0$.  We define a fence poset
\[
P_S=P_{(a_1,\dots,a_n)}:=(\{1,2,\dots,s_n-1\},\preceq)
\]
as follows.  For $x\in \{1,\dots,s_n-1\}$, choose the unique $j\in\{1,\dots,n\}$ such that
\[
s_{j-1}\leq x<s_j,
\]
and set
\[
\varepsilon(x):=(-1)^{j-1}.
\]
For adjacent labels $x,x+1$ with $1\leq x<s_n-1$, the cover relations are defined by
\[
\begin{cases}
x\lessdot x+1 & \text{if }\varepsilon(x)=1,\\
x+1\lessdot x & \text{if }\varepsilon(x)=-1.
\end{cases}
\]
Thus the direction of the Hasse diagram alternates whenever one passes from one block of the sequence $S$ to the next.  Notice that the last label is $s_n-1$, not $s_n$; equivalently, the numbers of edges in the successive blocks are
\[
a_1-1,\ a_2,\ a_3,\dots,\ a_{n-1},\ a_n-1.
\]

\begin{definition}
Let $(P,\preceq)$ be a fence poset.  A subset $I\subset P$ is called an \emph{order ideal} of $P$ if, whenever $x\in I$, $y\in P$, and $y\preceq x$, one has $y\in I$.  We denote by $\mathcal J(P)$ the set of order ideals of $P$.  The empty set is also regarded as an order ideal.
\end{definition}

For a fence poset $P$, we write
\[
N(P):=\#\mathcal J(P).
\]
For a finite sequence of positive integers $(a_1,\dots,a_n)$, we define
\[
N(a_1,\dots,a_n):=N(P_{(a_1,\dots,a_n)}).
\]
We also use the convention $N()=1$ for the empty sequence.  More generally,
\[
N(a_i,\dots,a_j):=1\qquad (i>j).
\]
\begin{example}
Let $S=(2,2)$.  Then $s_2=4$, so $P_S$ has the three elements $1,2,3$.  The cover relations are
\[
1\lessdot 2,\qquad 3\lessdot 2.
\]
Equivalently, the Hasse diagram of $P_S$ is
\begin{center}
\begin{tikzpicture}[scale=1]
\node (two) at (0,1.2) {$2$};
\node (one) at (-0.8,0) {$1$};
\node (three) at (0.8,0) {$3$};
\draw (one) -- (two) -- (three);
\end{tikzpicture}
\end{center}
where larger elements are placed higher.  Its order ideals are
\[
\emptyset,\quad \{1\},\quad \{3\},\quad \{1,3\},\quad \{1,2,3\}.
\]
Hence $N(2,2)=5$.
\end{example}

We now relate the sequence obtained from the line segment $L_t$ to the
generalized Markov number attached to the same slope.

\begin{theorem}[{\cite{bana-gyo}}]\label{thm:gm-sequence-counts-gm}
For any $(k_1,k_2,k_3)\in \mathbb Z_{\geq 0}^3$, $\sigma\in \mathfrak S_3$, and $t\in (0,\infty)\cap \mathbb Q$, one has
\[
N(s^{\circ}_{k_1,k_2,k_3,\sigma}(t))=m_t,
\]
where $m_t$ is a $(k_1,k_2,k_3)$-GM number parameterized by $t$ in $\mathrm M\mathbb T(k_1,k_2,k_3,\sigma)$.
\end{theorem}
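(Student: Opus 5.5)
The plan is to prove the identity by induction along the Farey tree, using a matrix (transfer) interpretation of $N$. The basic tool is the standard fact that the number of order ideals of a fence poset is an entry of a product of $2\times2$ matrices. Concretely, write $N(a_1,\dots,a_n)$ as a continuant-type expression, i.e.\ an entry of a product $\prod_j A(a_j)$ with $A(a)=\begin{pmatrix} a&1\\1&0\end{pmatrix}$-type factors (with the appropriate sign or inversion for alternating blocks). This is the snake-graph / continued-fraction correspondence.

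From this we get an additive recursion for concatenations. If a sequence $S$ splits as $S=S'\,|\,S''$ at a block boundary, or with a merge of the two adjacent blocks, then $N(S)$ is expressed through $N(S')$, $N(S'')$ and the numbers $N$ of the sequences with a first or last block shortened. This is the usual "gluing" identity for fence posets.

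For the induction itself, take a Farey triple $(r,t,s)$ with $t=r\oplus s$. The segment $L_t$ from $(0,0)$ to $(q,p)$ passes near the lattice point $(q_r,p_r)$, where $r=p_r/q_r$, and the translates of $L_r$ and $L_s$ approximate it. Because $p_rq_s-p_sq_r=\pm1$, no lattice point lies strictly between $L_t$ and the broken path $L_r$ followed by the translate of $L_s$. Hence the sign string of $L_t$ is the concatenation of the sign string of $L_r$ and that of $L_s$, modified only locally near the junction point. That junction contributes a bounded correction involving the triangle and edges around $(q_r,p_r)$, including a diagonal edge and its $k$-multiplicity.

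This geometric decomposition, combined with the gluing identity, yields a relation of the form
\[
m_t m_{r\ominus}=m_r^2+k\,m_rm_s+m_s^2
\]
at the level of $N$. Here $r\ominus$ denotes the third vertex of the parent triple, and $k$ is the appropriate $k_{\sigma(i)}$. This matches exactly the recursion $b'=(a^2+k_jab+b^2)/c$ defining $\mathrm M\mathbb T(k_1,k_2,k_3,\sigma)$.

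In practice it is more convenient to carry a stronger induction hypothesis: the full $2\times2$ matrix attached to $L_t$, with the entries $N(S)$, $N(S$ minus first block$)$, and so on. One then shows that it satisfies a Cohn-type matrix recursion whose trace or specified entry reproduces the GM tree recursion, with the labels $i_t$ tracking which $k$ enters.

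The base cases are $t=1$, where the sign string has length giving $N=k_{\sigma(2)}+2$, and the boundary slopes $0/1$ and $1/0$, where $N=1$. Both are checked directly against the root $\big((1,\sigma(1)),(k_{\sigma(2)}+2,\sigma(2)),(1,\sigma(3))\big)$, by confirming that horizontal, diagonal and vertical edges carry $k_{\sigma(1)},k_{\sigma(2)},k_{\sigma(3)}$ respectively.

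The main obstacle is the junction analysis. I expect the hardest part to be proving precisely how the sign string of $L_t$ decomposes into those of $L_r$ and $L_s$, including the special first-triangle sign rule and the edge-midpoint tie convention. One must also confirm that the multiplicity at the junction is the $k$ dictated by the position label in the GM tree. I would first verify this decomposition on the example $t=2/5$ of Example~\ref{ex:gm-sequence-two-fifths-before-fence}, and then prove it in general by an affine unimodular change of coordinates sending $(r,s)$ to $(0/1,1/0)$.
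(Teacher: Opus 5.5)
The paper does not prove this statement; it quotes it from \cite{bana-gyo}. Your outline therefore has to stand on its own, and its key geometric step is false. Your lattice-point argument correctly shows that the cells crossed by $L_t$ split into those of $L_r$, one junction tile, and those of the translate of $L_s$ (this is Lemma~\ref{lem:decomposition-lemma}). It also controls the triangle signs away from the junction. But edge signs are decided by edge midpoints, i.e.\ points of $\tfrac12\mathbb Z^2$. The centre of $L_r$ and the centre of the translated $L_s$ are edge midpoints lying \emph{on} those segments, so they get sign $-$, yet they lie strictly to the right of $L_t$, so there they get sign $+$. Hence the central edge of each piece, with all its $k$ copies, flips sign in the middle of the piece, far from the junction. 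This is exactly the inversion $\beta\mapsto\beta^{-1}$, $\alpha\mapsto\alpha^{-1}$ in Proposition~\ref{prop:presnake-relation}(3), i.e.\ $\omega_t=\omega_r^{-1}\,xyz\,\omega_s^{-1}$. Concretely, take $(k_1,k_2,k_3)=(1,2,0)$, $\sigma=\mathrm{id}$, $t=\tfrac25$, $r=\tfrac13$, $s=\tfrac12$:
\begin{itemize}
\item $L_{1/3}$ reads $(-)^4(+)^1(-)^3(+)^4$, so $s^{\circ}_{1,2,0,\mathrm{id}}(\tfrac13)=(4,1,3,4)$, while the first twelve signs of $L_{2/5}$ (the $r$-piece) read $(-)^4(+)^3(-)^1(+)^4$;
\item the $\tfrac12$-piece starts with $+$ instead of the forced $-$.
\end{itemize}
Because the $r$-piece is the reversed, sign-swapped string of $L_r$, its top continuant survives: $N(4,3,1,4)=N(4,1,3,4)=81$. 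The shortened continuants that any gluing identity uses do not: $N(1,3,4)=17=u_r$, whereas $N(3,1,4)=19=u_r+k_r$. A ``bounded correction at the junction'' therefore feeds wrong numbers into your gluing.

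Second, even with the pieces corrected, gluing is bilinear, not quadratic. The junction tile always carries the letters $xyz$, hence all three $k_i$, as a single run of length $K-1$. In matrix form the decomposition is $M_t=M_r^{-1}(XYZ)M_s^{-1}$, whose $(2,1)$-entry is $m_t=Km_rm_s+u_rm_s-u_sm_r-k_sm_r$. This is not $m_tm_{r\ominus}=m_r^2+k_{i_{r\ominus}}m_rm_s+m_s^2$, and no local multiplicity at the junction can supply the label-dependent $k_{i_{r\ominus}}$. Converting one into the other needs an invariant that remembers the parent triple. Write $\ell$ for the lower-left entry. From $ABC=XYZ$, $\det=1$ and the traces one gets the Vieta identity $\ell(A)+\ell(B^{-1}AB)=K\,\ell(B)\ell(C)+\ell(B)\operatorname{tr}C+\ell(C)\operatorname{tr}B$, together with its mirror image for $BCB^{-1}$. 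Combined with the generalized Markov equation at the root, this gives the exchange rule of $\mathrm M\mathbb T(k_1,k_2,k_3,\sigma)$. Your ``stronger induction hypothesis'' never says which recursion the continuant matrix satisfies or why, and that is the substance of the theorem.

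A repaired version of your plan has two steps. First, show that the continuant matrix of $s^{\circ}_{k_1,k_2,k_3,\sigma}(t)$ equals the letter product $M_t$ (the second equality in Theorem~\ref{thm:mm-matrix-entries}), for instance by checking that your corrected decomposition multiplies out to $M_r^{-1}(XYZ)M_s^{-1}$. Then use Theorem~\ref{thm:ft-wt} and the Vieta identity to conclude $(M_t)_{21}=m_t$.
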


\begin{example}\label{ex:theorem-gm-sequence-two-fifths}
We continue Example~\ref{ex:gm-sequence-two-fifths-before-fence}.  Theorem~\ref{thm:gm-sequence-counts-gm} gives
\[
 N\left(4,3,1,4,5,1,3,4\right)=m_{\frac{2}{5}}.
\]
We compute this order-ideal number by using continued fractions.  By \cite[Corollary~13]{banaian-kmarkov}, it is the numerator of the continued fraction attached to the same shape.  Hence
\[
 N\left(4,3,1,4,5,1,3,4\right)
 =\operatorname{num}[4;3,1,4,5,1,3,4].
\]
Moreover,
\[
[4;3,1,4,5,1,3,4]
=4+\cfrac{1}{3+\cfrac{1}{1+\cfrac{1}{4+\cfrac{1}{5+\cfrac{1}{1+\cfrac{1}{3+\cfrac{1}{4}}}}}}}
=\frac{8227}{1930}.
\]
Therefore
\[
 N\left(4,3,1,4,5,1,3,4\right)=8227.
\]
The corresponding Farey triple is $(\frac{1}{3},\frac{2}{5},\frac{1}{2})$.  In the tree $\mathrm M\mathbb T(1,2,0,\mathrm{id})$, the corresponding vertex is
\[
 ((81,2),(8227,1),(17,3)).
\]
Thus $m_{\frac{2}{5}}=8227$.  In particular, the positioned triple
\[
 (x_1,x_2,x_3)=(8227,81,17)
\]
is a positive integer solution of the $(1,2,0)$-GM equation.  Indeed,
\[
\begin{aligned}
&8227^2+81^2+17^2+81\cdot17+2\cdot17\cdot8227\\
&\quad=67683529+6561+289+1377+279718\\
&\quad=67971474\\
&\quad=6\cdot8227\cdot81\cdot17.
\end{aligned}
\]
This gives a concrete instance in which the value computed from the sequence above is a GM number.
\end{example}

\section{Words from rational slopes and Markov-monodromy matrices}\label{sec:mm-words}
\subsection{The word tree and its geometric realization}

Let $\mathfrak F_3$ be the free group generated by $\{x,y,z\}$.  We consider the following rooted planar binary tree with vertices in $\mathfrak F_3^3$:
\begin{itemize}\setlength{\leftskip}{-10pt}
\item[(1)] The root vertex is $(x,y,z)\in \mathfrak F_3^3$.
\item[(2)] A vertex $(a,b,c)$ has the following two children:
\[\begin{xy}(0,0)*+{(a,b,c)}="1",(-20,-10)*+{(a,bcb^{-1},b)}="2",(20,-10)*+{(b,b^{-1}ab,c).}="3", \ar@{-}"1";"2"\ar@{-}"1";"3"
\end{xy}\]
\end{itemize}
We call this tree the \emph{word tree} and denote it by $\mathrm{W}\mathbb T$.
Its vertices provide the recursive source of the words considered below.
The first few vertices of $\mathrm{W}\mathbb T$ are as follows:
\[
\resizebox{0.98\textwidth}{!}{$\begin{xy}(-8,0)*+{(x,y,z)}="0",(0,20)*+{\left(y,y^{-1}xy,z\right)}="1",(0,-20)*+{\left(x,yzy^{-1},y\right)}="1'",(25,30)*+{\left(y^{-1}xy,y^{-1}x^{-1}yxy,z\right)}="20",(25,10)*+{\left(y,y^{-1}xyzy^{-1}x^{-1}y,y^{-1}xy\right)}="21",(25,-10)*+{\left(yzy^{-1},yz^{-1}y^{-1}xyzy^{-1},y\right)}="22",(25,-30)*+{\left(x,yzyz^{-1}y^{-1},yzy^{-1}\right)}="23",(103,35)*+{\left(y^{-1}x^{-1}yxy,y^{-1}x^{-1}y^{-1}xyxy,z\right)}="30",(103,25)*+{\left(y^{-1}xy,y^{-1}x^{-1}yxyzy^{-1}x^{-1}y^{-1}xy,y^{-1}x^{-1}yxy\right)}="31",(103,15)*+{\left(y^{-1}xyzy^{-1}x^{-1}y,y^{-1}xyz^{-1}y^{-1}x^{-1}yxyzy^{-1}x^{-1}y,y^{-1}xy\right)}="32",(103,5)*+{\left(y,y^{-1}xyzy^{-1}xyz^{-1}y^{-1}x^{-1}y,y^{-1}xyzy^{-1}x^{-1}y\right)}="33",(103,-5)*+{\left(yz^{-1}y^{-1}xyzy^{-1},yz^{-1}y^{-1}x^{-1}yzy^{-1}xyzy^{-1},y\right)}="34",(103,-15)*+{\left(yzy^{-1},yz^{-1}y^{-1}xyzyz^{-1}y^{-1}x^{-1}yzy^{-1},yz^{-1}y^{-1}xyzy^{-1}\right)}="35",(103,-25)*+{\left(yzyz^{-1}y^{-1},yzy^{-1}z^{-1}y^{-1}xyzyz^{-1}y^{-1},yzy^{-1}\right)}="36",(103,-35)*+{\left(x,yzyzy^{-1}z^{-1}y^{-1},yzyz^{-1}y^{-1}\right)}="37",\ar@{-}"0";"1"\ar@{-}"0";"1'"\ar@{-}"1";"20"\ar@{-}"1";"21"\ar@{-}"1'";"22"\ar@{-}"1'";"23"\ar@{-}"20";"30"\ar@{-}"20";"31"\ar@{-}"21";"32"\ar@{-}"21";"33"\ar@{-}"22";"34"\ar@{-}"22";"35"\ar@{-}"23";"36"\ar@{-}"23";"37"
\end{xy}$}.
\]
\begin{remark}
Even if $a,b,c$ are reduced words, the products $bcb^{-1}$ and $b^{-1}ab$ need not be reduced.
\end{remark}
We next describe the geometric realization of the words attached to slopes.  For $t\in(0,\infty)\cap\mathbb Q$, let $\Omega(t)$ be the element of $\mathfrak F_3$ located at the unique vertex $(r,t,s)$ of the Farey tree whose middle component is $t$.  For the boundary cases, set $\Omega(\frac{0}{1})=x$ and $\Omega(\frac{1}{0})=z$.

We construct a reduced representative $\omega_t$ of $\Omega(t)$.  Set $\omega_{\frac{0}{1}}=x$ and $\omega_{\frac{1}{0}}=z$.  For a positive reduced fraction $t=\frac{p}{q}\in(0,\infty)\cap\mathbb Q$, consider the segment $L_t$ from $(0,0)$ to $(q,p)$ in the modified lattice $\widetilde{\mathbb{R}^{2}}$, oriented from left to right. See Figure~\ref{fig:ex-presnakegraph}.

The word $\omega_t$ is obtained by reading the edges crossed by $L_t$.  A horizontal (resp., diagonal, vertical) edge contributes the letter $x^{\pm1}$ (resp., $y^{\pm1}$, $z^{\pm1}$).  The exponent is determined by the side on which the edge midpoint lies: if the midpoint is not on the right-hand side of the oriented segment $L_t$, including the case where it lies on $L_t$ itself, we use $x$, $y$, or $z$; if it is on the right-hand side, we use $x^{-1}$, $y^{-1}$, or $z^{-1}$.  The letters are then read in the order in which the corresponding edge-crossing events occur along the oriented segment.  See Figures~\ref{fig:mm-minus-edge} and~\ref{fig:mm-plus-edge}.

\begin{figure}[ht]
    \centering
\begin{tikzpicture}[baseline=0mm]
\draw (-0.5,0) -- (0.5,0);
\fill (0,0) circle (1.5pt);
\draw[red] (-0.5,-0.3) -- (0.5,0.3);
\end{tikzpicture}
\hspace{0.7cm}
\begin{tikzpicture}[baseline=0mm]
\draw (-0.5,0) -- (0.5,0);
\fill (0,0) circle (1.5pt);
\draw[red] (-0.5,-0.3) -- (0.5,0.1);
\end{tikzpicture}
\hspace{0.7cm}
\begin{tikzpicture}[baseline=0mm]
\draw (-0.5,0.5) -- (0.5,-0.5);
\fill (0,0) circle (1.5pt);
\draw[red] (-0.5,-0.3) -- (0.5,0.3);
\end{tikzpicture}\hspace{0.6cm}
\begin{tikzpicture}[baseline=0mm]
\draw (-0.5,0.5) -- (0.5,-0.5);
\fill (0,0) circle (1.5pt);
\draw[red] (-0.5,-0.3) -- (0.5,0);
\end{tikzpicture}
\hspace{0.7cm}
\begin{tikzpicture}[baseline=0mm]
\draw (0,-0.5) -- (0,0.5);
\fill (0,0) circle (1.5pt);
\draw[red] (-0.5,-0.3) -- (0.5,0.3);
\end{tikzpicture}\hspace{0.6cm}
\begin{tikzpicture}[baseline=0mm]
\draw (0,-0.5) -- (0,0.5);
\fill (0,0) circle (1.5pt);
\draw[red] (-0.5,-0.3) -- (0.5,0);
\end{tikzpicture}
    \caption{Edges with $x,y$ and $z$}
    \label{fig:mm-minus-edge}
\end{figure}

\begin{figure}[ht]
    \centering
\begin{tikzpicture}[baseline=0mm]
\draw (-0.5,0) -- (0.5,0);
\fill (0,0) circle (1.5pt);
\draw[red] (-0.5,-0.1) -- (0.5,0.3);
\end{tikzpicture}
\hspace{0.7cm}
\begin{tikzpicture}[baseline=0mm]
\draw (-0.5,0.5) -- (0.5,-0.5);
\fill (0,0) circle (1.5pt);
\draw[red] (-0.5,-0) -- (0.5,0.3);
\end{tikzpicture}\hspace{0.7cm}
\begin{tikzpicture}[baseline=0mm]
\draw (0,-0.5) -- (0,0.5);
\fill (0,0) circle (1.5pt);
\draw[red] (-0.5,-0) -- (0.5,0.3);
\end{tikzpicture}
    \caption{Edges with $x^{-1},y^{-1}$ and $z^{-1}$}
    \label{fig:mm-plus-edge}
\end{figure}

\begin{example}\label{ex:continued-fraction-Markov}
Let $t=\frac{2}{5}$. The letters assigned to the edges in $L_{\frac{2}{5}}$ are given as in Figure~\ref{fig:ex-signed-presnakegraph}, and we have $\omega_{\frac{2}{5}}=yzy^{-1}z^{-1}y^{-1}xyzyz^{-1}y^{-1}$.
\begin{figure}[ht]
    \centering
    \includegraphics[scale=0.08]{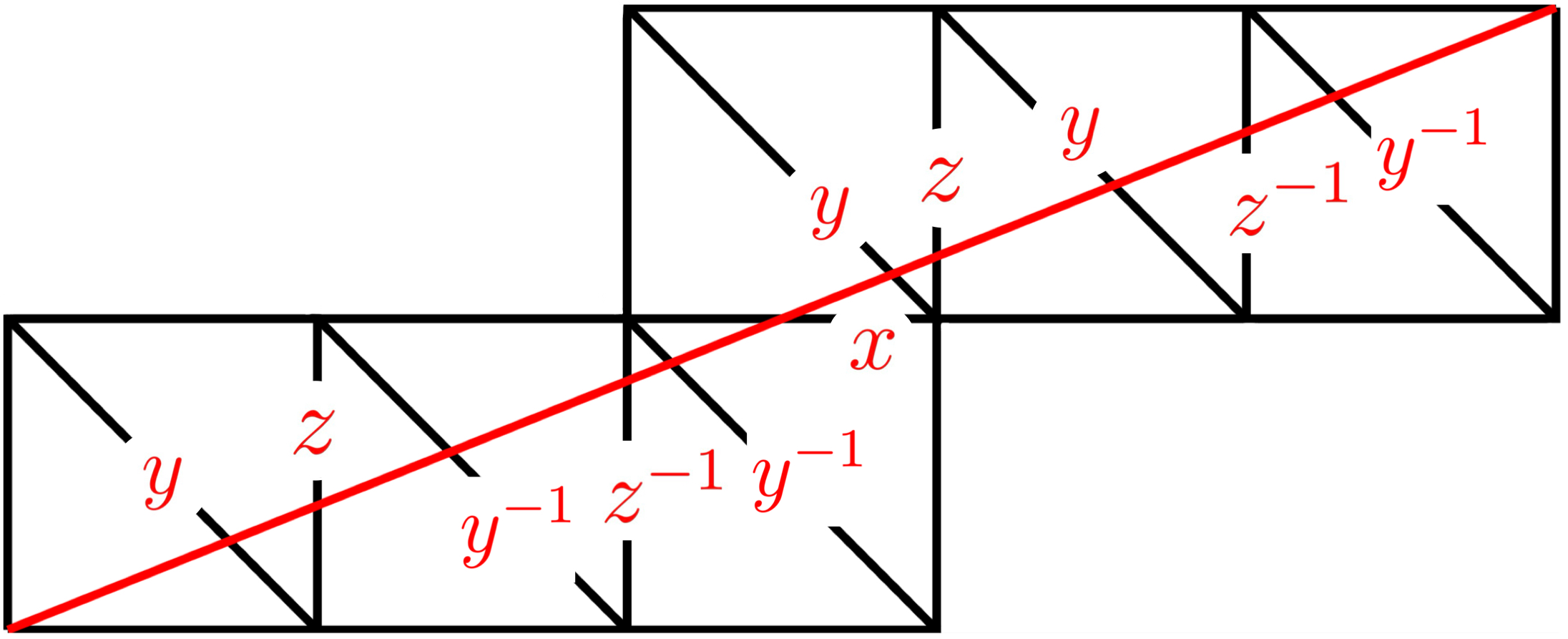}
    \caption{Letters assigned to edges in $L_{\frac{2}{5}}$}
    \label{fig:ex-signed-presnakegraph}
\end{figure}
Thus $\omega_{\frac{2}{5}}$ is the word attached to the slope $\frac{2}{5}$.
\end{example}
No two consecutive crossed edges have the same slope, and hence no adjacent inverse letters are created.  Thus we have the following proposition.
\begin{proposition}\label{prop:reduced-word}
For any irreducible fraction $t\in\mathbb Q_{\ge0}^{\mathrm{ext}}$, the word $\omega_{t}$ is a reduced word.
\end{proposition}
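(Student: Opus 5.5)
The plan is to prove the claim stated just before Proposition~\ref{prop:reduced-word}: along $L_t$, two consecutively crossed edges never have the same slope, and when one of them is horizontal the next is never horizontal again with opposite position. Once this holds, no letter in $\omega_t$ is followed by its own inverse, which is exactly reducedness. The boundary cases are immediate, since $\omega_{\frac{0}{1}}=x$ and $\omega_{\frac{1}{0}}=z$ are single letters and hence reduced. So I fix a reduced $t=\frac{p}{q}\in(0,\infty)$ and study the sequence of edges met by the open segment $L_t^\circ$.

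\textbf{Step 1: the segment avoids lattice points.} Because $\gcd(p,q)=1$, the open segment $L_t^\circ$ contains no lattice point. It also cannot run along an edge of $\widetilde{\mathbb R^2}$, since its slope is positive and finite while the edges have slopes $0$, $\infty$ and $-1$. Hence every crossing with an edge is a transversal crossing at an interior point of that edge, and consecutive crossings are separated by passages through the interior of a single triangle of the triangulation.

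\textbf{Step 2: consecutive letters differ.} Between two consecutive edge crossings, the segment passes through the interior of one triangle: it enters through one side and leaves through a different side. The three sides of any triangle in $\widetilde{\mathbb R^2}$ consist of exactly one horizontal, one vertical and one diagonal edge, because each unit square is cut by its slope $-1$ diagonal. The entry and exit sides are distinct sides of that triangle, so they are of different types and carry different letters among $x,y,z$. Consequently any two adjacent letters of $\omega_t$ involve different generators, so no subword of the form $aa^{-1}$ or $a^{-1}a$ can occur, and $\omega_t$ is reduced.

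\textbf{Main obstacle.} The only delicate point is justifying that the segment exits each triangle through a side different from the one it entered. A line meets a convex triangle's boundary in at most two points unless it contains a side. Containing a side is excluded by the slope argument in Step 1, and passing through a vertex is excluded by the absence of interior lattice points. I expect writing Step 1 carefully to be the main bookkeeping step; the rest is immediate.
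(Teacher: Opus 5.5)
Your proof is correct and follows the same route as the paper: both reduce reducedness to the fact that consecutive edges crossed by $L_t$ have distinct slopes. Your triangle argument supplies the justification the paper leaves implicit: the open segment avoids lattice points, so between consecutive crossings it traverses a single triangle, and it enters and leaves through distinct sides, which are of distinct types. The extra clause in your opening sentence about horizontal edges ``with opposite position'' is superfluous, since adjacent letters being powers of distinct generators already rules out any cancellation.
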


\begin{proof}
The boundary cases $t=\frac{0}{1}$ and $t=\frac{1}{0}$ are immediate because $\omega_{\frac{0}{1}}=x$ and $\omega_{\frac{1}{0}}=z$.  Assume $t\in(0,\infty)\cap\mathbb Q$.  Consecutive edges crossed by $L_t$ have distinct slopes.  Hence two consecutive letters in $\omega_t$ are never powers of the same generator, so no adjacent inverse pair can occur.
\end{proof}

The symmetry of the construction also gives the following useful form of $\omega_t$.
\begin{proposition}\label{prop:symmetric-decomposition}
For any irreducible fraction $t\in\mathbb Q_{\ge0}^{\mathrm{ext}}$, there exist a word $w$ and a letter $\alpha\in \{x,y,z\}$ such that $\omega_t=w\alpha w^{-1}$.
\end{proposition}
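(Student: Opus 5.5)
The plan is to use the point symmetry of $L_t$ about its midpoint. For $t=\frac{0}{1}$ and $t=\frac{1}{0}$ take $w$ empty and $\alpha=x$ or $z$; so assume $t=\frac{p}{q}\in(0,\infty)$ with $\gcd(p,q)=1$.

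\emph{The central symmetry.} Consider the rotation $\rho$ by $\pi$ about the midpoint $c=(\frac q2,\frac p2)$ of $L_t$. It maps $\mathbb Z^2$ to itself, since it sends $(u,v)$ to $(q-u,p-v)$. It preserves the slope classes of edges: horizontal, vertical and slope $-1$ segments go to segments of the same kind. Hence $\rho$ is an automorphism of the modified lattice $\widetilde{\mathbb{R}^{2}}$, and it maps $L_t$ to itself with the orientation reversed. Therefore, if the crossed edges are listed in order along $L_t$ as $e_1,\dots,e_n$, then $\rho(e_i)=e_{n+1-i}$, and $e_i$ and $e_{n+1-i}$ carry the same letter from $\{x,y,z\}$ up to exponent.

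\emph{Exponents.} A rotation by $\pi$ preserves the plane's orientation, but here it also reverses the segment's direction. So a midpoint strictly to the right of the oriented $L_t$ is sent to a midpoint strictly to the left of the oriented $L_t$, and conversely. Hence when the midpoint of $e_i$ is off $L_t$, the exponents of the letters at positions $i$ and $n+1-i$ are opposite. Writing $\omega_t=\ell_1\cdots\ell_n$, this gives $\ell_{n+1-i}=\ell_i^{-1}$ whenever the midpoint of $e_i$ does not lie on $L_t$.

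\emph{Midpoints on the segment.} This is the main step. Suppose the midpoint $m$ of a crossed edge lies on $L_t$. Then $2m\in\mathbb Z^2$ and $m\in L_t$. Since $\gcd(p,q)=1$, the only points of $L_t$ with coordinates in $\frac12\mathbb Z$ are the endpoints and $c$. An edge midpoint is never a lattice point, and the endpoints are not interior crossings, so $m=c$. The edge whose midpoint is $c$ is unique, because $c$ is fixed by $\rho$. Its type is determined by the parities of $(q,p)$:
\begin{itemize}
\item if $q$ is odd and $p$ is even, it is horizontal;
\item if $q$ is even and $p$ is odd, it is vertical;
\item if both are odd, it is the diagonal through $c$.
\end{itemize}
We must check that $L_t$ really crosses this edge in its interior rather than running along it. This holds because $L_t$ has positive slope, while the edge is horizontal, vertical, or of slope $-1$. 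So exactly one crossed edge, $e_k$, has its midpoint on $L_t$, and this forces $n=2k-1$ to be odd.

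\emph{Conclusion.} Set $w=\ell_1\cdots\ell_{k-1}$ and let $\alpha=\ell_k$. By the rule "not on the right-hand side," $\alpha$ has exponent $+1$, so $\alpha\in\{x,y,z\}$. The symmetry gives $\ell_{k+j}=\ell_{k-j}^{-1}$, hence
\[
\ell_{k+1}\cdots\ell_n=\ell_{k-1}^{-1}\cdots\ell_1^{-1}=w^{-1},
\]
and so $\omega_t=w\alpha w^{-1}$. The delicate point is the midpoint analysis. It needs the coprimality of $p$ and $q$, and it needs to rule out $L_t$ passing through lattice points other than its endpoints, which also follows from $\gcd(p,q)=1$.
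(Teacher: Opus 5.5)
Your proof is correct and follows the same route as the paper: the point reflection $\rho$ of $L_t$ about its midpoint pairs the crossed edges $e_i\leftrightarrow e_{n+1-i}$ with mutually inverse letters, and the middle crossing supplies $\alpha$. You also fill in details the paper's sketch leaves implicit: the coprimality argument that $c$ is the only edge midpoint on $L_t$, and the check that the central letter has exponent $+1$. One small wording fix: the edge with midpoint $c$ is unique simply because distinct edges have distinct midpoints, whereas $\rho(c)=c$ is what gives $\rho(e_k)=e_k$ and hence $n=2k-1$.
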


\begin{proof}
The boundary cases are immediate.  Assume $t\in(0,\infty)\cap\mathbb Q$.  The segment $L_t$ is centrally symmetric.  The crossed edges occur in symmetric pairs around the middle crossing, and the two letters in each pair are mutually inverse under the chosen orientation convention.  Reading from the two ends toward the center therefore gives $\omega_t=w\alpha w^{-1}$ for the letter $\alpha$ attached to the middle crossing.
\end{proof}

We call the decomposition $\omega_t=w\alpha w^{-1}$ in Proposition~\ref{prop:symmetric-decomposition} the \emph{symmetric decomposition of $\omega_t$}.

We use the following convention for isomorphisms of rooted planar binary trees.
\begin{definition}
Two rooted planar binary trees are said to be \emph{isomorphic} if there exists a graph isomorphism between them which sends the root to the root and sends the left, respectively right, child of each vertex to the left, respectively right, child of its image.  Such an isomorphism, if it exists, is necessarily unique.
\end{definition}
\begin{theorem}\label{thm:ft-wt}
Regard each $\omega_t$ as an element of $\mathfrak F_3$, and define
\[
\Phi:V(\mathrm{F}\mathbb T)\longrightarrow V(\mathrm{W}\mathbb T),
\qquad
\Phi(r,t,s)=(\omega_r,\omega_t,\omega_s).
\]
Then $\Phi$ is the isomorphism of rooted planar binary trees from $\mathrm{F}\mathbb T$ to $\mathrm{W}\mathbb T$.
\end{theorem}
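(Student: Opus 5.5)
Both trees are complete rooted planar binary trees, so there is exactly one isomorphism between them, sending the Farey vertex reached by a given left/right path from the root to the word-tree vertex reached by the same path. Call this vertex correspondence $\Psi$. The claim is that $\Psi=\Phi$. I will prove this by induction on depth.

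\emph{Base case.} The root $(\frac01,\frac11,\frac10)$ must go to $(x,y,z)$. Since $\omega_{\frac01}=x$ and $\omega_{\frac10}=z$ by convention, it remains to read $L_{1}$ from $(0,0)$ to $(1,1)$. It crosses only the diagonal edge of the unit square, at that edge's midpoint, so the letter is $y$ and $\omega_1=y$.

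\emph{Inductive step.} Suppose $\Psi(r,t,s)=(\omega_r,\omega_t,\omega_s)$ for a Farey triple $(r,t,s)$ in $\mathrm F\mathbb T$. I must show two identities in $\mathfrak F_3$:
\[
\omega_{r\oplus t}=\omega_t\,\omega_s\,\omega_t^{-1},\qquad \omega_{t\oplus s}=\omega_t^{-1}\,\omega_r\,\omega_t .
\]
These are genuinely geometric statements, which I plan to prove as follows.

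\begin{enumerate}
\item Write $r=\frac{a}{b}$, $t=\frac{c}{d}$, $s=\frac{e}{f}$. By Proposition~\ref{prop:property-farey} these satisfy $cb-ad=1$ and $ed-cf=1$, and the Farey triple gives $(c,d)=(a,b)+(e,f)$.
\item For $u=r\oplus t$, the segment $L_u$ runs from $(0,0)$ to $(b+d,a+c)=(d,c)+(b,a)$. I will compare $L_u$ with the broken path $(0,0)\to(d,c)\to(d+f,c+e)$, made of a translate of $L_t$ followed by a translate of $L_s$. That route does not land on $(b+d,a+c)$ directly, so I will instead use the decomposition $(b+d,a+c)=(d,c)+(f,e)-(d,c)+(b+d,a+c)$ only formally. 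More concretely, the unimodularity conditions imply there are no lattice points strictly between $L_u$ and the polygonal path $(0,0)\to(d,c)\to(b+d,a+c)$. Hence $L_u$ crosses exactly the edges crossed by $L_t$ followed by those crossed by the translate of $L_r$, except near the corner $(d,c)$.
\item The cleaner formulation is via the symmetric decomposition (Proposition~\ref{prop:symmetric-decomposition}). Here is the key lemma I would isolate: for a unimodular pair $v,v'$ (vectors $(b,a)$ and $(d,c)$), the word read along the segment $v+v'$ equals the product of the words read along $v$ and along $v'$ translated, each corrected by conjugation accounting for the corner, with the correction supplied by lattice points lying exactly to one side. I would prove it by a homotopy argument. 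Reading edge letters with signs gives a well-defined element of $\mathfrak F_3$ for any generic path between lattice points, and that element is invariant under homotopy that avoids lattice points. This holds because each triangle's boundary does not contribute a relation: moving a path across a triangle face without crossing a vertex changes the word only by free cancellation. In particular, the word depends only on the homotopy class of the path in the plane minus the lattice, together with endpoint conventions.
\item With that invariance, $L_{r\oplus t}$ is homotopic, rel endpoints slightly perturbed, to $L_t$ followed by a translate of $L_s$ traversed so as to produce the conjugate $\omega_t\omega_s\omega_t^{-1}$. Since $\omega_s$ need not literally appear, I would check the identity on the level of the group element: $\omega_t$ and the loop around the corner combine to give the conjugation.
\end{enumerate}

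The main obstacle is making the endpoint convention precise. Segments $L_t$ start and end at lattice points, where the ``midpoint on the line counts as non-right'' rule applies. So the homotopy invariance must be handled with small perturbations of the endpoints, and I must verify that the chosen convention yields exactly the conjugation pattern $bcb^{-1}$ and $b^{-1}ab$ rather than some other ordering. I would first test the claimed identities on small cases against the listed tree (for example $\omega_{1/2}=yzy^{-1}$, $\omega_2=y^{-1}xy$, and $\omega_{2/5}$ from Example~\ref{ex:continued-fraction-Markov}). That would fix the perturbation direction: start the path just to the left of the origin, and end it just to the right. Finally, each vertex of $\mathrm F\mathbb T$ is a Farey triple, so the induction covers all vertices, and $\Phi=\Psi$ is the isomorphism.
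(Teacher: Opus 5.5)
\textbf{There is a genuine gap in steps 2--4.} Your base case is correct, and the two target identities $\omega_{r\oplus t}=\omega_t\omega_s\omega_t^{-1}$ and $\omega_{t\oplus s}=\omega_t^{-1}\omega_r\omega_t$ are exactly what the tree isomorphism requires. The mechanism you propose for proving them fails for two reasons.

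\emph{The homotopy invariance in step 3 is false.} The exponent of a letter is not determined by the edge and the crossing direction alone. When a positive-slope segment crosses a horizontal edge upward, it reads $x$ if it crosses the right half (or the midpoint) and $x^{-1}$ if it crosses the left half. For example, $L_{3/5}$ reads $x$ at $(\frac53,1)$ and $x^{-1}$ at $(\frac{10}{3},2)$; this matches $\omega_{3/5}=yz^{-1}y^{-1}xyzyzy^{-1}x^{-1}yzy^{-1}$ from the word tree. The letters $y$ and $z$ behave the same way. So sliding a single crossing along one edge past its midpoint changes the word without touching any lattice point. Any local extension of the rule therefore labels half-edges, and the edge midpoints are punctures just as the lattice points are.

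This is where the real content sits. Deforming $L_{r\oplus t}$ across a lattice-point-free unimodular triangle to a broken path sweeps across the centres of the segments. These centres are edge midpoints lying on the segments, and each sweep inverts a central letter, turning $w\alpha w^{-1}$ into $w\alpha^{-1}w^{-1}$. That is precisely the central flip in Proposition~\ref{prop:presnake-relation}. Your step 2 claim, that $L_{r\oplus t}$ reads like $L_t$ followed by a translate of $L_r$ apart from the corner, misses it.

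\emph{Even with the invariance corrected, the target cannot come from a homotopy rel endpoints.} With $r=\frac ab$, $t=\frac cd$, $s=\frac ef$, the word $\omega_t\omega_s\omega_t^{-1}$ is read along $(0,0)\to(d,c)\to(d+f,c+e)\to(f,e)$. That path does not end at $(b+d,a+c)$, the endpoint of $L_{r\oplus t}$. What a correct deformation across the triangle $(0,0),(b,a),(b+d,a+c)$ actually gives is $\omega_{r\oplus t}=\omega_r^{-1}xyz\,\omega_t^{-1}$: the corner just above $(b,a)$ contributes $xyz$, and the two swept centres invert $\omega_r$ and $\omega_t$.

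To convert this into $\omega_t\omega_s\omega_t^{-1}$ you need the parent relation $\omega_t=\omega_r^{-1}xyz\,\omega_s^{-1}$, i.e.\ $\omega_r\omega_t\omega_s=xyz$. Your inductive hypothesis does not carry this relation and you never state it; the unspecified ``loop around the corner'' in step 4 is exactly this missing relation.

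\emph{How the paper organizes the proof.} It is built around this relation:
\begin{enumerate}
\item Lemma~\ref{prop:presnake-relation-W} proves $\Omega(t)=\Omega(r)^{-1}xyz\,\Omega(s)^{-1}$ algebraically from the word-tree recursion.
\item Proposition~\ref{prop:presnake-relation} proves $\omega_t=\omega_r^{-1}xyz\,\omega_s^{-1}$ geometrically, using the pre-snake graph decomposition together with the central-letter flip.
\item Induction over the Farey tree then finishes the argument.
\end{enumerate}
Your homotopy route can be repaired along the same lines. Puncture at midpoints as well as lattice points, strengthen the induction to $\omega_r\omega_t\omega_s=xyz$, and treat separately the cases $r=\frac01$ or $s=\frac10$, where $\omega_r$ or $\omega_s$ is fixed by convention. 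The result would be a legitimate alternative proof of that proposition.
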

We record the following immediate consequence.
\begin{corollary}
For $t\in(0,\infty)\cap\mathbb Q$, the word $\omega_t$ is the reduced representative of $\Omega(t)$ obtained from the line segment $L_t$; for the boundary cases, the representatives are the conventions $\omega_{\frac{0}{1}}=x$ and $\omega_{\frac{1}{0}}=z$.
\end{corollary}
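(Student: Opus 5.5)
The plan is to read off the corollary directly from Theorem~\ref{thm:ft-wt} together with Proposition~\ref{prop:reduced-word}; no new geometric input is needed. First, fix $t\in(0,\infty)\cap\mathbb Q$ and let $(r,t,s)$ be the unique vertex of $\mathrm{F}\mathbb T$ with middle component $t$, which exists by Proposition~\ref{prop:property-farey}(2). By definition, $\Omega(t)$ is the middle entry of the vertex of $\mathrm{W}\mathbb T$ occupying the same position in the tree as $(r,t,s)$ occupies in $\mathrm{F}\mathbb T$.

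Since $\Phi$ is the (unique) isomorphism of rooted planar binary trees by Theorem~\ref{thm:ft-wt}, it sends $(r,t,s)$ to the vertex of $\mathrm{W}\mathbb T$ reached by the same sequence of left/right moves from the root. That vertex is therefore $\Phi(r,t,s)=(\omega_r,\omega_t,\omega_s)$. Comparing middle entries gives the equality $\omega_t=\Omega(t)$ in $\mathfrak F_3$. Here I would note explicitly that the identification ``same position'' is exactly what root-preserving, left/right-preserving isomorphism means, so that no ambiguity arises from the planar structure.

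Second, Proposition~\ref{prop:reduced-word} says that the word $\omega_t$ read off from $L_t$ is reduced. An element of a free group has exactly one reduced representative, so $\omega_t$ is \emph{the} reduced representative of $\Omega(t)$, and by construction it is obtained from the segment $L_t$.

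Finally, the boundary cases $t=\frac01,\frac10$ hold by the conventions $\Omega(\frac01)=x=\omega_{\frac01}$ and $\Omega(\frac10)=z=\omega_{\frac10}$; these single letters are trivially reduced. There is no genuine obstacle: the only point requiring care is the bookkeeping that $\Omega$ is defined through tree positions, so that it matches $\Phi$. All the real work sits in Theorem~\ref{thm:ft-wt}, which we are allowed to assume.
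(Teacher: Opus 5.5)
Your proposal is correct and is essentially the paper's argument: Theorem~\ref{thm:ft-wt} gives $\omega_t=\Omega(t)$ in $\mathfrak F_3$, and Proposition~\ref{prop:reduced-word} gives reducedness, with the boundary cases holding by convention. Your extra remarks, that a root- and left/right-preserving isomorphism matches vertices at the same tree position and that reduced forms in a free group are unique, only make explicit what the paper's two-line proof leaves implicit.
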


\begin{proof}
By Theorem~\ref{thm:ft-wt}, the word $\omega_t$ represents $\Omega(t)$, and by Proposition~\ref{prop:reduced-word} it is reduced.
\end{proof}

The proof of Theorem~\ref{thm:ft-wt} uses the following two auxiliary facts: one algebraic fact and one geometric fact.

\begin{lemma}\label{prop:presnake-relation-W}
For a Farey triple $(r,t,s)$, we have $\Omega(t)=\Omega(r)^{-1}xyz\Omega(s)^{-1}$.
\end{lemma}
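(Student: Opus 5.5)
Since $\Omega$ is defined through the word tree $\mathrm{W}\mathbb T$ together with the Farey tree, the identity is best proved as a statement about vertices of $\mathrm{W}\mathbb T$. Write $\mathrm{W}\mathbb T$-vertices as $(a,b,c)$; we will show by induction on depth that every vertex $(a,b,c)$ of $\mathrm{W}\mathbb T$ satisfies
\[
abc = xyz \qquad\text{in } \mathfrak F_3 .
\]
Since the vertex of $\mathrm{W}\mathbb T$ corresponding to the Farey triple $(r,t,s)$ (under the obvious tree-shape identification of $\mathrm{F}\mathbb T$ with $\mathrm{W}\mathbb T$) is $(\Omega(r),\Omega(t),\Omega(s))$, this immediately gives $\Omega(t)=\Omega(r)^{-1}xyz\,\Omega(s)^{-1}$. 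Here one must check that the outer entries of that vertex are indeed $\Omega(r)$ and $\Omega(s)$ as defined: $\Omega$ is defined only via the middle entry, so we additionally need that the outer entries of a vertex of $\mathrm{W}\mathbb T$ coincide with the middle entries at the positions of $r$ and $s$ (or with $x$, $z$ in the boundary cases).

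The base case is the root $(x,y,z)$, where $abc=xyz$ trivially. For the inductive step, if $abc=xyz$ then the left child gives $a\cdot bcb^{-1}\cdot b=abc$, and the right child gives $b\cdot b^{-1}ab\cdot c=abc$, so the product is preserved. This is a one-line free-group computation.

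The main obstacle is the bookkeeping step that the outer entries are the $\Omega$-values of the outer Farey fractions. We will prove by induction the stronger statement: if the Farey vertex $(r,t,s)$ sits at the same position as the $\mathrm{W}\mathbb T$ vertex $(a,b,c)$, then $a=\Omega(r)$, $b=\Omega(t)$, $c=\Omega(s)$. For the root this is the convention $\Omega(\tfrac01)=x$, $\Omega(\tfrac11)=y$, $\Omega(\tfrac10)=z$. For the children, the Farey children are $(r,r\oplus t,t)$ and $(t,t\oplus s,s)$, whose outer entries are among $r,t,s$. The $\mathrm{W}\mathbb T$ children $(a,\ast,b)$ and $(b,\ast,c)$ have outer entries among $a,b,c$ in exactly the same pattern. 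The middle entries agree by the definition of $\Omega$ (Proposition~\ref{prop:property-farey}(2) guarantees each $t$ appears as a middle entry exactly once, so $\Omega$ is well defined). This completes the induction and hence the lemma.
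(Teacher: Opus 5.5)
Your proof is correct and is essentially the paper's argument. The paper also inducts down the Farey tree: it writes the left child's middle entry as $\Omega(t)\Omega(s)\Omega(t)^{-1}$ and cancels $\Omega(s)^{-1}\Omega(s)$, which is exactly your observation that $abc=xyz$ is preserved by both child operations; your explicit check that the vertex of $\mathrm{W}\mathbb T$ at the position of $(r,t,s)$ is $(\Omega(r),\Omega(t),\Omega(s))$ spells out a step the paper uses only implicitly.
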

\begin{proof}
If $(r,t,s)=\left(\frac{0}{1},\frac{1}{1},\frac{1}{0}\right)$, we have $\Omega(r)=x,\Omega(t)=y,\Omega(s)=z$ by definition, and this identity satisfies $\Omega(t)=\Omega(r)^{-1}xyz\Omega(s)^{-1}$. We proceed by mathematical induction. Assume that $\Omega(t)=\Omega(r)^{-1}xyz\Omega(s)^{-1}$ for a triple $(r,t,s)\in \mathrm{F}\mathbb{T}$. It suffices to show that the statement holds for the children $(r,r\oplus t, t)$ and $(t,t\oplus s,s)$ of $(r,t,s)$, that is,
\[\Omega(r\oplus t)=\Omega(r)^{-1}xyz\Omega(t)^{-1},\quad \Omega(t\oplus s)=\Omega(t)^{-1}xyz\Omega(s)^{-1}.\]
We show the former equality. By definition of $\mathrm{W}\mathbb{T}$, we have $\Omega(r\oplus t)=\Omega(t)\Omega(s)\Omega(t)^{-1}$. By the assumption of induction, we have
\[\Omega(r\oplus t)=\Omega(r)^{-1}xyz\Omega(s)^{-1}\Omega(s)\Omega(t)^{-1}=\Omega(r)^{-1}xyz\Omega(t)^{-1},\]
as desired.  The latter can be proved in the same way.
\end{proof}

\begin{proposition}\label{prop:presnake-relation}
For a Farey triple $(r,t,s)$ with $t\in (0,\infty)\cap\mathbb Q$, the following three statements hold:
 \begin{itemize}\setlength{\leftskip}{-10pt}
     \item[(1)] Assume that $r=\frac{0}{1}$ and $s\neq\frac{1}{0}$. If the symmetric decomposition of $\omega_s$ is $w\alpha w^{-1}$, then
     \[\omega_t=yzw\alpha^{-1}w^{-1}.\]
     \item[(2)] Assume that $r\neq \frac{0}{1}$ and $s= \frac{1}{0}$. If the symmetric decomposition of $\omega_r$ is $u\beta u^{-1}$, then
     \[\omega_t=u\beta^{-1}u^{-1}xy.\]
     \item[(3)] Assume that $r\neq \frac{0}{1}$ and $s\neq \frac{1}{0}$. If the symmetric decomposition of $\omega_r$ (resp. $\omega_s$) is $u\beta u^{-1}$ (resp. $w\alpha w^{-1}$), then
     \[\omega_t=u\beta^{-1}u^{-1}xyzw\alpha^{-1} w^{-1}.\]
 \end{itemize}
\end{proposition}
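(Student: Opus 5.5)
This is a geometric statement about the segments $L_r,L_t,L_s$, so the plan is to prove it directly from the lattice picture, with no induction on the tree. Write $t=\frac{p}{q}$, $r=\frac{a}{b}$, $s=\frac{c}{d}$. Since $(r,t,s)$ is a Farey triple, $p=a+c$, $q=b+d$ and $ad-bc=\pm1$; by Proposition~\ref{prop:property-farey}(3), $bc-ad=1$. Hence $L_t$ runs from $(0,0)$ to $(q,p)=(b,a)+(d,c)$. The lattice point $(b,a)$ lies at signed distance $1/\sqrt{p^2+q^2}$ on one side of $L_t$, and no lattice point lies strictly between $L_t$ and the broken path $(0,0)\to(b,a)\to(q,p)$. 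That broken path is $L_r$ followed by a translate of $L_s$. I would therefore compare the crossings of $L_t$ with the crossings of the broken path. Because no lattice point lies in the thin triangle between them, the two paths cross the same edges, in the same order, except near the corner $(b,a)$.

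\textbf{Local analysis at the corner.} Take a small neighbourhood of $(b,a)$ in $\widetilde{\mathbb R^2}$. Six edges meet there: horizontal to the left and right, vertical up and down, and diagonal up-left and down-right. The broken path passes through the vertex itself, while $L_t$ passes just below it (or above it; the sign of $bc-ad$ decides which). Thus $L_t$ crosses exactly those edges at $(b,a)$ that lie in the sector between the incoming direction of slope $r$ and the outgoing direction of slope $s$, on the side away from $L_t$. Since $r<1\cdots$ in general, I would split into slopes: $r<t<s$ with $r\le1\le s$ or otherwise. I expect these sector edges to be the right horizontal, the down-right diagonal and the down vertical, read in the order $x,y,z$. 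Their midpoints lie on the non-right side of $L_t$, so they contribute $xyz$ with positive exponents. This matches the middle factor $xyz$ in (3).

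\textbf{Endpoint corrections.} The word read along $L_r$ from $(0,0)$ to $(b,a)$ uses the midpoint-side convention relative to $L_r$. Midpoints lying exactly on $L_r$, which happens only at the central crossing by the central symmetry used in Proposition~\ref{prop:symmetric-decomposition}, are counted as positive. Relative to $L_t$, every midpoint has the same side as relative to $L_r$, except the one lying on $L_r$, which now lies strictly on one definite side of $L_t$. For every other edge the sides agree, because no lattice midpoint lies between the two lines; I would verify this by a determinant estimate with $bc-ad=1$, since midpoints have half-integer coordinates. This flips exactly the central letter, so $u\beta u^{-1}$ becomes $u\beta^{-1}u^{-1}$, and likewise $w\alpha w^{-1}$ becomes $w\alpha^{-1}w^{-1}$ for the translated $L_s$. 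Cases (1) and (2) are the degenerate versions. When $r=\frac01$, the path $L_r$ is a single horizontal step whose crossed interior contributes nothing, and the corner $(1,0)$ contributes only $yz$ (the horizontal edge $x$ is now $L_r$ itself, so it is not crossed). Case (2) is the symmetric situation, with the corner contributing $xy$.

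\textbf{Main obstacle.} The hard step is the midpoint-side bookkeeping: showing that only the central letter flips and that the corner edges all receive positive exponents. This includes handling midpoints lying exactly on $L_t$, which is allowed by the tie-breaking convention. I would first check it against Example~\ref{ex:continued-fraction-Markov}, with $(r,t,s)=(\frac13,\frac25,\frac12)$: here $\omega_{1/3}=yzyz^{-1}y^{-1}$ flips to $yzy^{-1}z^{-1}y^{-1}$, and $\omega_{1/2}=yzy^{-1}$ flips to $yz^{-1}y^{-1}$. Together with the corner factor $xyz$ this gives exactly $\omega_{2/5}$. I would then give the general lattice-distance argument.
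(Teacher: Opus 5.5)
Your plan has the same overall shape as the paper's proof: an $r$-part, the tile $xyz$, and an $s$-part, with only the central letters changing. The justification is different, though. The paper proves (1) by comparing heights of intersection points in $\mathcal{PG}(\frac1{p+1})$ and $\mathcal{PG}(\frac1p)$. It proves (3) by citing the decomposition of $\mathcal{PG}(t)$ into $\mathcal{PG}(r)$, one tile and $\mathcal{PG}(s)$ (Lemma~\ref{lem:decomposition-lemma}), together with the offset of Lemma~\ref{lem:height-in-pg}. You instead derive everything from the fact that the triangle $T$ with vertices $(0,0),(b,a),(q,p)$ has area $\frac12$. That is self-contained, treats (1)--(3) uniformly, in effect reproves the decomposition lemma, and explains why exactly the central letters flip. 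It works once the corner step is corrected.

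\textbf{The corner analysis is wrong as written.} Put $\delta(\xi,\eta)=q\eta-p\xi$, so a point is not on the right of $L_t$ exactly when $\delta\ge0$. Since $\delta(b,a)=qa-pb=-1$, the corner lies strictly below $L_t$; there is no ``or above''.

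The edges at $(b,a)$ that $L_t$ crosses are those pointing into the angle of $T$, i.e.\ \emph{towards} $L_t$, not away from it. They are the horizontal edge to $(b-1,a)$, the diagonal to $(b-1,a+1)$ and the vertical edge to $(b,a+1)$. Their far endpoints have $\delta=p-1,\ p+q-1,\ q-1$. They meet $L_t$ at abscissae $b-\frac1p<b-\frac1{p+q}<b$, which gives the order $x,y,z$. Their midpoints have $\delta=\frac p2-1,\ \frac{p+q}2-1,\ \frac q2-1\ge0$, which gives positive exponents, with equality exactly when the midpoint is the centre of $L_t$.

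The edges you named (right, down-right, down) have far endpoints with $\delta=-1-p,\,-1-p-q,\,-1-q$. So $L_t$ never crosses them, and even if it did they would be read $z,y,x$. Your own case (1) already uses the left horizontal edge. With the correct edges, cases (1) and (2) are just $p=1$ and $q=1$, where the horizontal, respectively vertical, corner edge drops out. So no case split according to $r\le1\le s$ is needed.

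\textbf{Two further claims need tightening.} First, ``no midpoint between the two lines'' is false for the full lines. What is true is that $2T$ has area $2$ and exactly six boundary lattice points, so by Pick it has no interior lattice points. Hence the only non-lattice half-integer points of $T$ are the side centres $(\frac b2,\frac a2)$, $(\frac q2,\frac p2)$, $(\frac{b+q}2,\frac{a+p}2)$, with $\delta=-\frac12,\,0,\,-\frac12$.
\begin{itemize}
\item The first and third are counted positive on $L_r$ and on the translate of $L_s$, but lie on the right of $L_t$; this gives the flips of $\beta$ and $\alpha$.
\item The centre of $L_t$ lies on the left of both $L_r$ and the translate of $L_s$ (the cross products equal $\frac12$), so it does not flip.
\item Every other crossed edge is a chord of $T$ whose midpoint lies outside $T$, hence on the same side of both lines.
\end{itemize}
Second, ``same edges in the same order'' needs the remark that no edge can cross both $L_r^\circ$ and the translate of $L_s^\circ$ without meeting $L_t$. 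This holds because the diagonal from $(b,a)$ to $(b-1,a+1)$ always cuts across $T$ from the corner to $L_t$, and edges of $\widetilde{\mathbb R^2}$ meet only at endpoints.
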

To prove Proposition~\ref{prop:presnake-relation}, we introduce the pre-snake graph. For a given irreducible fraction $t\in (0,\infty)\cap\mathbb Q$, we take the union of the edges of all triangles crossed by $L_t$ and call the resulting graph the \emph{pre-snake graph}. Here a triangle is crossed when the open segment $L_t^\circ$ meets its interior, and a tile means one of the right-angled triangular cells of the modified lattice. We denote by $\mathcal{PG}(t)$ the pre-snake graph associated with $t$. We note that $\mathcal{PG}(t)$ does not contain $L_t$. For example, $\mathcal{PG}(\frac{2}{5})$ is the graph formed by the black lines in Figure~\ref{fig:ex-presnakegraph}.
\begin{proof}[Proof of Proposition~\ref{prop:presnake-relation} (1) and (2)]
We first prove (1).  Under the assumptions $r=\frac{0}{1}$ and $s\neq \frac{1}{0}$, there exists $p\in \mathbb Z_{>0}$ such that $s=\frac{1}{p}$ and $t=\frac{1}{p+1}$.  The case $p=1$ is checked directly from the construction, so assume $p\geq 2$.  Consider the letters $x,y,z,x^{-1},y^{-1},z^{-1}$ assigned to the edges of $\mathcal{PG}(t)$ using $L_t$.  Then the first two letters in $\mathcal{PG}(\frac{1}{p+1})$ are $y$ and $z$; see Figure~\ref{fig:presnake-1-8}.
\begin{figure}[ht]
    \centering
    \includegraphics[scale=0.09]{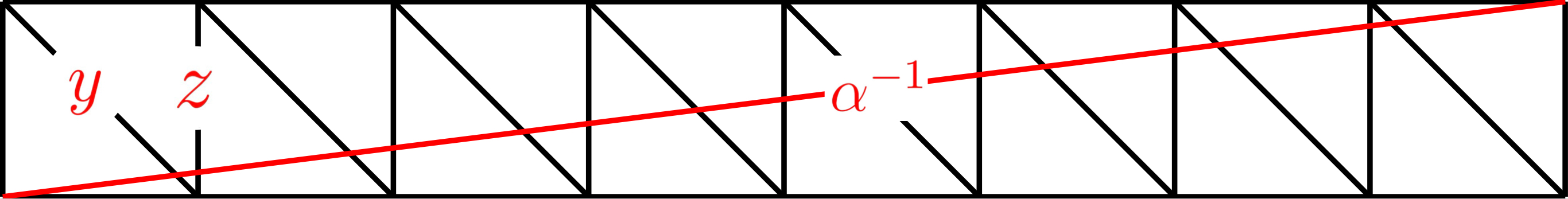}
    \caption{$\mathcal{PG}(\frac{1}{p+1})$ with $p=7$}
    \label{fig:presnake-1-8}
\end{figure}

Let us compare the letter sequence of $\mathcal{PG}(\frac{1}{p+1})$ after removing the first tile and that of $\mathcal{PG}(\frac{1}{p})$ (compare Figures~\ref{fig:presnake-1-8} and~\ref{fig:presnake-1-7}). We denote by $\mathcal{SPG}(\frac{1}{p+1})$ the former graph.
\begin{figure}[ht]
    \centering
    \includegraphics[scale=0.08]{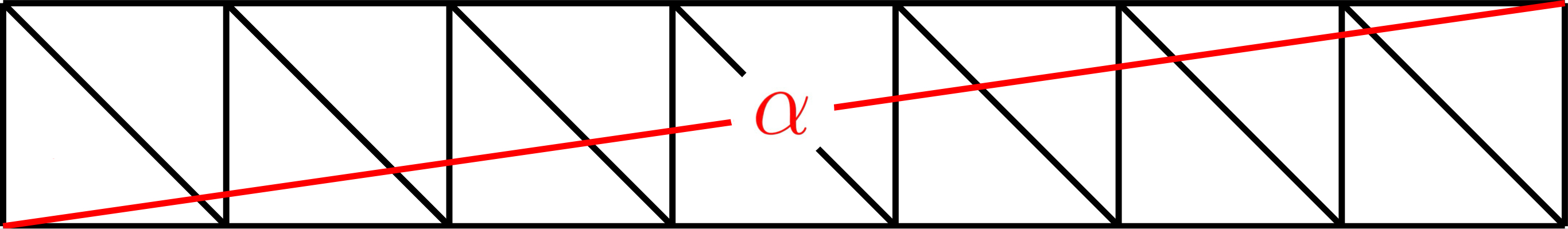}
    \caption{$\mathcal{PG}(\frac{1}{p})$ with $p=7$}
    \label{fig:presnake-1-7}
\end{figure}

We prove that only the letters associated with the central edges in $\mathcal{SPG}(\frac{1}{p+1})$ and $\mathcal{PG}(\frac{1}{p})$ differ. The central letters differ: the corresponding letter of $\mathcal{PG}(\frac{1}{p})$ is $\alpha$ (where $\alpha=y$ or $z$) by assumption, whereas the corresponding letter of $\mathcal{SPG}(\frac{1}{p+1})$ is $\alpha^{-1}$.  It remains to show that all other letters coincide.  We first consider vertical edges. The height of the intersection point between the $(a+1)$-th vertical edge from the left of $\mathcal{PG}(\frac{1}{p})$ and the line segment $L_{\frac{1}{p}}$ is $\frac{a}{p}$. Moreover, the height of the intersection point between the $(a+1)$-th vertical edge from the left of $\mathcal{SPG}(\frac{1}{p+1})$ and the line segment $L_{\frac{1}{p+1}}$ is $\frac{a+1}{p+1}$. Since $\frac{a}{p} < \frac{a+1}{p+1}$, it is sufficient to show $\frac{a+1}{p+1} \leq \frac{1}{2}$ if $\frac{a}{p} < \frac{1}{2}$. Since $2a\leq p-1$, we have
\[\frac{1}{2}-\frac{a+1}{p+1}=\frac{p-2a-1}{2(p+1)}\geq 0,\]
as desired.  The same argument applies to the letters assigned to diagonal edges.  Hence only the central letters differ, and the claim follows.

The proof of (2) is the reflected version of the proof of (1).  More precisely, apply the symmetry of the modified lattice obtained by reflecting the picture across the line of slope $1$ and reversing the orientation of the boundary strip.  Under this symmetry, horizontal and vertical edges are interchanged, diagonal edges are preserved, and the right-hand-side convention is reversed.  Hence the left boundary case $r=\frac{0}{1}$ is changed into the right boundary case $s=\frac{1}{0}$; the initial block $yz$ in (1) becomes the terminal block $xy$ in (2), while the central letter is inverted in the same way.  This gives
\[
\omega_t=u\beta^{-1}u^{-1}xy.
\]
\end{proof}

To prove Proposition~\ref{prop:presnake-relation}(3), we use two lemmas.
\begin{lemma}\label{lem:height-in-pg}
If $r=\frac{a}{b}$ and $s=\frac{c}{d}$, so that $t=\frac{a+c}{b+d}$, then $y_{b+1}-\lfloor y_{b+1} \rfloor=\frac{1}{b+d}$, where $y_i$ is the height of the intersection point of $L_t$ and the $i$-th vertical line from the left in $\mathcal{PG}(t)$.
\end{lemma}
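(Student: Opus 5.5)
The plan is a direct computation. Since $t=\frac{a+c}{b+d}$, the segment $L_t$ runs from $(0,0)$ to $(b+d,a+c)$, and the $i$-th vertical line from the left in $\mathcal{PG}(t)$ is $X=i-1$. First I would pin down this indexing: the leftmost vertical line of $\mathcal{PG}(t)$ is the line through the starting point $(0,0)$, since the first crossed triangle has $(0,0)$ as a vertex. Then the $(b+1)$-th vertical line is $X=b$, and the height of its intersection with $L_t$ is
\[
y_{b+1}=\frac{b(a+c)}{b+d}.
\]

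Next I would bring in the Farey-triple relation. Since $(r,t,s)$ is a vertex of $\mathrm{F}\mathbb T$, the neighbours $r=\frac ab$ and $s=\frac cd$ satisfy the unimodularity condition
\[
bc-ad=1.
\]
This holds because $(r,t,s)$ is a Farey triple (Proposition~\ref{prop:property-farey}(1)), and in any case it follows by an immediate induction from the root $\bigl(\frac01,\frac11,\frac10\bigr)$. The sign is fixed by $r<s$ (Proposition~\ref{prop:property-farey}(3)). Using it,
\[
b(a+c)=ab+bc=ab+ad+1=a(b+d)+1,
\]
so
\[
y_{b+1}=a+\frac{1}{b+d}.
\]
Since $b+d\ge2$ when $t\in(0,\infty)$, the fractional part is exactly $\frac1{b+d}$.

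The only delicate point is the indexing convention. The $(b+1)$-th vertical line must really be $X=b$, and the possible endpoints need separate attention. In the case $r=\frac01$ we have $b=1$, giving the line $X=1$, and that line does belong to $\mathcal{PG}(t)$. I would settle the convention by checking Figure~\ref{fig:ex-presnakegraph}: there $t=\frac25$ with $r=\frac13$, $b=3$, and the fourth vertical line is $X=3$, at height $\frac65$, whose fractional part is $\frac15$. This confirms the indexing. The rest is the unimodularity computation above.
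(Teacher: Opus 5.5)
Your proposal is essentially the paper's proof: you evaluate $L_t$ on the line $X=b$ to get $y_{b+1}=\frac{b(a+c)}{b+d}$, then use the Farey-neighbour relation $bc-ad=1$ to rewrite this as $a+\frac{1}{b+d}$. One small correction: $b+d\ge 2$ does not hold for every $t\in(0,\infty)$, since for integer $t$ one has $s=\frac{1}{0}$, $d=0$, $b+d=1$, and the fractional part is $0$ rather than $1$; so the lemma actually needs $t\notin\mathbb Z$ (equivalently $d\ge 1$), which is exactly the setting of Proposition~\ref{prop:presnake-relation}(3), where the lemma is used.
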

\begin{proof}
The $(b+1)$-st vertical line from the left is the vertical line through $x=b$.  Since $L_t$ has slope $\frac{a+c}{b+d}$, its intersection with this vertical line has height
\[
y_{b+1}=\frac{b(a+c)}{b+d}=a+\frac{bc-ad}{b+d}.
\]
Because $r$ and $s$ are neighboring entries in a Farey triple with $r<s$, we have $bc-ad=1$.  Hence $y_{b+1}=a+\frac{1}{b+d}$, and the claim follows.
\end{proof}
\begin{lemma}[{\cite[Corollary~7.15]{gyoda-maruyama-sato}}]\label{lem:decomposition-lemma}
For $(r,t,s)\in \mathrm{F}\mathbb{T}$, $\mathcal{PG}(t)$ is decomposed into $\mathcal{PG}(r)$, one intervening tile, and $\mathcal{PG}(s)$ in the order from the lower left to the upper right.
\end{lemma}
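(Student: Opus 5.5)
The plan is a lattice-point argument showing that $L_t$ is a small perturbation of the broken line $L_r$ followed by a translate of $L_s$. Let $r=\frac ab$ and $s=\frac cd$ with $bc-ad=1$, so that $t=\frac{a+c}{b+d}$. Put $P=(b,a)$, and note that $L_t$ runs from $O=(0,0)$ to $Q=(b+d,a+c)$. Consider the path $\Gamma$ consisting of $L_r$ from $O$ to $P$ followed by the translate $P+L_s$ from $P$ to $Q$.

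\textbf{Step 1: no lattice points between $\Gamma$ and $L_t$.} The lattice triangle $OPQ$ has doubled area $|b(a+c)-a(b+d)|=bc-ad=1$. By Pick's theorem it is unimodular, so it contains no lattice points other than its three vertices, either in its interior or on its edges. By Lemma~\ref{lem:height-in-pg}, $L_t$ passes above $P$ at height $a+\frac{1}{b+d}$.

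\textbf{Step 2: from lattice points to crossed tiles.} Every edge of the modified lattice $\widetilde{\mathbb R^2}$ joins two lattice points, and it lies on one of the lines $x=\mathrm{const}$, $y=\mathrm{const}$ or $x+y=\mathrm{const}$. An open segment meets the interior of such an edge exactly when the two endpoints of the edge lie strictly on opposite sides of the segment's supporting line, within the range of the segment. The tiles crossed are then determined by the consecutive crossed edges.

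Using Step 1, I compare $L_t$ with $\Gamma$ piece by piece. Over the strip $0\le x\le b$ (away from $P$), no lattice point lies between $L_r$ and $L_t$. Hence the edges crossed by $L_r^\circ$ are exactly the edges crossed by $L_t$ before it reaches the neighbourhood of $P$, and in the same order. This identifies the initial portion of $\mathcal{PG}(t)$ with $\mathcal{PG}(r)$. Symmetrically, translating by $-P$, the final portion of $\mathcal{PG}(t)$ is identified with $P+\mathcal{PG}(s)$.

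\textbf{Step 3: local analysis at $P$.} $L_r$ ends at the lattice point $P$ and $P+L_s$ starts there, but $L_t$ misses $P$, passing just above it (by Lemma~\ref{lem:height-in-pg}, at vertical distance $\frac1{b+d}$). Because the slopes satisfy $r<t<s$ and $P$ is the only lattice point near the corner, I will check directly which tile containing $P$ as a vertex is crossed by $L_t$ but by neither $L_r^\circ$ nor $(P+L_s)^\circ$. The candidate is the tile with vertices $(b-1,a+1),(b,a),(b,a+1)$, or the adjacent one $(b,a),(b+1,a),(b,a+1)$, depending on how $L_t$ meets the diagonal through $P$; I will determine this from the sign of the relevant $2\times2$ determinants. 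I will then verify that exactly one such tile occurs.

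\textbf{Main obstacle.} I expect the main obstacle to be the boundary bookkeeping at $P$. The last tile of $\mathcal{PG}(r)$ and the first tile of $P+\mathcal{PG}(s)$ both have $P$ as a vertex, and I must show that $L_t$ crosses precisely one further tile between them. In particular, it must neither skip a tile nor cross two. I would handle this by listing the six tiles around $P$ and comparing the directions $r<t<s$ against the directions of the lattice lines through $P$. Finally, I would confirm that the ordering from lower left to upper right follows from the monotonicity of $L_t$.
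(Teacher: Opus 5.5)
\textbf{Step 3 aims at a false target.} Steps 1 and 2 are sound. Over $0\le x\le b$, the region between $L_r$ and $L_t$ is the triangle with vertices $O$, $P$, $(b,a+\frac{1}{b+d})$, which lies inside the unimodular triangle $OPQ$. But $L_t$ never crosses exactly one extra triangular cell between $\mathcal{PG}(r)$ and $P+\mathcal{PG}(s)$; it always crosses exactly two.

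A count shows this at once. For $t=\frac pq$ the open segment $L_t^\circ$ meets $q-1$ vertical, $p-1$ horizontal and $p+q-1$ diagonal edges, so it meets $2(p+q)-2$ triangles. The same formula gives $0$ for $\frac01$ and $\frac10$. Hence $\mathcal{PG}(t)$ has two more triangles than $\mathcal{PG}(r)$ and $\mathcal{PG}(s)$ together. The use of the lemma in Proposition~\ref{prop:presnake-relation}(3) forces the same conclusion: the intervening piece carries the three letters $xyz$, whereas a single triangle sandwiched between the two pieces would contribute only two crossed edges.

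Concretely, with $bc-ad=1$:
\begin{itemize}
\item $L_t$ meets $y=a$ at $x=b-\frac{1}{a+c}$;
\item it meets $x+y=a+b$ at $x=\frac{(a+b)(b+d)}{a+b+c+d}$;
\item it meets $x=b$ at height $a+\frac{1}{b+d}$.
\end{itemize}
The inequality $ad<bc$ gives $b-\frac{1}{a+c}<\frac{(a+b)(b+d)}{a+b+c+d}<b$. So there is no case distinction ``depending on how $L_t$ meets the diagonal through $P$''. $L_t$ always crosses the horizontal, diagonal and vertical edges incident to $P$ in this order. The first of these degenerates to the endpoint $O$ when $r=\frac01$, and the last to the endpoint $Q$ when $s=\frac10$.

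Between these crossings, $L_t$ passes through both triangles $(b-1,a),(b,a),(b-1,a+1)$ and $(b-1,a+1),(b,a),(b,a+1)$ of the unit square $[b-1,b]\times[a,a+1]$. Your first candidate is only one of them, and you omit the other. Your second candidate $(b,a),(b+1,a),(b,a+1)$ is never intervening: it is the first triangle of $P+\mathcal{PG}(s)$ when $s\neq\frac10$, and it is not met by $L_t^\circ$ at all when $s=\frac10$. The verification you plan ("exactly one such tile occurs") would therefore fail.

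\textbf{How to repair it.} The lemma is true only if ``tile'' means this unit square cut by its diagonal, i.e.\ a pair of triangular cells. That is evidently the intended sense. The paper's parenthetical gloss of a tile as a single right-angled triangular cell invites the reading you adopted, but under that reading the statement is false. The paper itself gives no proof and only cites Gyoda--Maruyama--Sato, Corollary~7.15.

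Replace Step 3 by the computation above: the three crossings at $P$ and the two triangles between them. Every other edge met by $L_t^\circ$ avoids $P$, and your lattice-point argument matches it with an edge met by $L_r^\circ$ or $(P+L_s)^\circ$, in order. With that change your Pick's-theorem approach gives a complete, self-contained proof.
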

\begin{proof}[Proof of Proposition~\ref{prop:presnake-relation} (3)]
By Lemma~\ref{lem:decomposition-lemma}, $\mathcal{PG}(t)$ is decomposed into $\mathcal{PG}(r)$, one intervening tile, and $\mathcal{PG}(s)$ (see Figure~\ref{fig:decomposition}). We denote by $\mathcal{SPG}(r)$ (resp. $\mathcal{SPG}(s)$) the $\mathcal{PG}(r)$- (resp. $\mathcal{PG}(s)$-)part in $\mathcal{PG}(t)$. Note that letters assigned to the tile of the second component in the decomposition are $xyz$.
\begin{figure}[ht]
    \centering
    \includegraphics[scale=0.17]{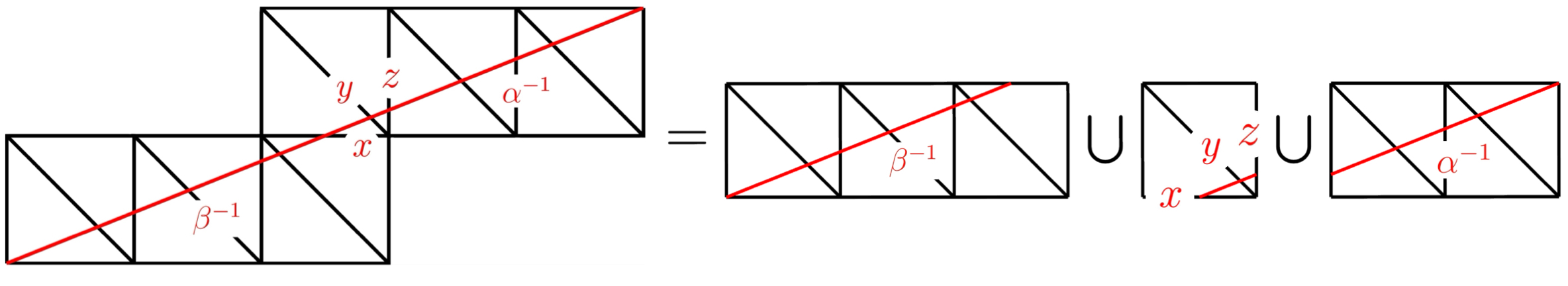}
    \caption{Decomposition of pre-snake graph with $r=\frac{1}{3}$, $t=\frac{2}{5}$, $s=\frac{1}{2}$}
    \label{fig:decomposition}
\end{figure}

Assume that $r=\frac{a}{b}$ and $s=\frac{c}{d}$.  By Lemma~\ref{lem:height-in-pg}, the vertical distance from the intersection of $L_t$ with the first vertical edge of $\mathcal{SPG}(s)$ to the lower endpoint of that edge is $\frac{1}{b+d}$.  Similarly, the horizontal distance from the intersection of $L_t$ with the rightmost horizontal edge of $\mathcal{SPG}(r)$ to the right endpoint of that edge is $\frac{1}{a+c}$. Comparing the letter sequence of $\mathcal{SPG}(r)$ and that of $\mathcal{PG}(r)$, the same argument as in case (1) shows that all letters except the central ones coincide.  The comparison between $\mathcal{SPG}(s)$ and $\mathcal{PG}(s)$ is identical.  This proves the claim.
\end{proof}

\begin{proof}[Proof of Theorem~\ref{thm:ft-wt}]
It is immediate that $\omega_{\frac{0}{1}}=x$, $\omega_{\frac{1}{1}}=y$, and $\omega_{\frac{1}{0}}=z$.  In the remaining cases, we prove that $\omega_t$ represents $\Omega(t)$.  By Lemma~\ref{prop:presnake-relation-W} and induction, it suffices to show that $\omega_t$ represents $\omega_r^{-1}xyz\omega_s^{-1}$ in $\mathfrak F_3$ for each Farey triple $(r,t,s)$.  First consider the case $r=\frac{0}{1}$.  If $\omega_s=w\alpha w^{-1}$, then Proposition~\ref{prop:presnake-relation}(1) and $\omega_r=x$ give
\[
\omega_t
=yzw\alpha^{-1}w^{-1}
=x^{-1}xyzw\alpha^{-1}w^{-1}
=\omega_r^{-1}xyz\omega_s^{-1},
\]
as desired.  In case (2), if $\omega_r=u\beta u^{-1}$, Proposition~\ref{prop:presnake-relation}(2) gives
\[
\omega_t=u\beta^{-1}u^{-1}xy
=u\beta^{-1}u^{-1}xyz z^{-1}
=\omega_r^{-1}xyz\omega_s^{-1}.
\]
In case (3), if $\omega_r=u\beta u^{-1}$ and $\omega_s=w\alpha w^{-1}$, Proposition~\ref{prop:presnake-relation}(3) gives
\[
\omega_t=u\beta^{-1}u^{-1}xyzw\alpha^{-1}w^{-1}
=\omega_r^{-1}xyz\omega_s^{-1}.
\]
Therefore, $\omega_t$ represents $\Omega(t)$ for any $t\in\mathbb Q_{\ge0}^{\mathrm{ext}}$.  Thus the map $\Phi$ sends the root of $\mathrm{F}\mathbb T$ to the root of $\mathrm{W}\mathbb T$ and preserves left and right children.  Hence $\Phi$ is the isomorphism of rooted planar binary trees from $\mathrm{F}\mathbb T$ to $\mathrm{W}\mathbb T$.
\end{proof}

\subsection{Markov-monodromy matrices}\label{sec:mm-matrices}
We now evaluate the words constructed above as matrices.  This is the point where the parameters $(k_1,k_2,k_3)$ and the label permutation $\sigma$ enter the construction.  Fix $(k_1,k_2,k_3)\in\mathbb Z_{\ge0}^3$ and $\sigma\in\mathfrak S_3$, and set $K=3+k_1+k_2+k_3$.  Assign to the letters $x,y,z$ the following matrices in $\mathrm{SL}(2,\mathbb Z)$:
\begin{align*}
X=\begin{bmatrix}-k_{\sigma(1)}&-1\\1&0\end{bmatrix},\quad
Y=\begin{bmatrix}1&-1\\ k_{\sigma(2)}+2&-k_{\sigma(2)}-1\end{bmatrix},\quad
Z=\begin{bmatrix}1&-k_{\sigma(3)}-2\\1&-k_{\sigma(3)}-1\end{bmatrix}.
\end{align*}
For an irreducible fraction $t\in\mathbb Q_{\ge0}^{\mathrm{ext}}$, define
\[
M_t:=\omega_t\big|_{x\mapsto X,\;y\mapsto Y,\;z\mapsto Z}.
\]
We call $M_t$ the \emph{Markov-monodromy matrix} attached to $t$ for
the fixed data $(k_1,k_2,k_3,\sigma)$.  We use the normalization in which
the generalized Markov number appears in the $(2,1)$-entry and the
characteristic number appears in the $(1,1)$-entry.  This differs from the
normalization used in \cite{gyoda-maruyama-sato} and \cite{bana-gyo}, where the generalized
Markov number is placed in the $(1,2)$-entry.  The present convention is
chosen so as to match the word construction and the continued-fraction
formula below.

Under this substitution, the word-tree recursion is transported to triples
of matrices.  If $(a,b,c)$ is a vertex of the word tree and $(A,B,C)$ is
the corresponding substituted matrix triple, then the two child operations
become
\[
(A,B,C)\mapsto(A,BCB^{-1},B),
\qquad
(A,B,C)\mapsto(B,B^{-1}AB,C).
\]
The root triple has lower-left entries $
  (1,k_{\sigma(2)}+2,1)$.
Moreover, under the above two operations, these lower-left entries transform
by the same exchange relations as those in the
$(k_1,k_2,k_3,\sigma)$-GM tree.  The position labels are transported along
the tree according to $\sigma$.  Thus the matrix recursion realizes the
same labeled GM tree.

The resulting entry formula is as follows.  In the equal-parameter case
$k_1=k_2=k_3=k$, this is the $k$-Markov-monodromy entry formula of
\cite[Theorems~7.10, 7.27(1)]{gyoda-maruyama-sato}, rewritten in the
present $(2,1)$-entry convention; the three-parameter version is treated in
\cite[Theorem~10.1]{bana-gyo}.
\begin{theorem}\label{thm:mm-matrix-entries}
Let $(k_1,k_2,k_3)\in \mathbb Z_{\geq 0}^3$ and
$\sigma\in \mathfrak S_3$.  For $t\in (0,\infty)\cap\mathbb Q$, assume that
\[
  s^{\circ}_{k_1,k_2,k_3,\sigma}(t)=(a_1,\dots,a_n).
\]
Let $m_t$ be the corresponding $(k_1,k_2,k_3)$-GM number, let $u_t$
be its characteristic number, and let $k_t=k_{i_t}$ be the coefficient
selected by the position label of $m_t$.  Then
\[
M_t=
\begin{bmatrix}
    u_t&-\dfrac{u_t^2+k_tu_t+1}{m_t}\\[6pt]
    m_t&-(u_t+k_t)
\end{bmatrix}
=
\begin{bmatrix}
 N(a_2,\dots,a_n)&-N(a_2,\dots,a_{n-1})\\
 N(a_1,\dots,a_n)&-N(a_1,\dots,a_{n-1})
\end{bmatrix}.
\]
\end{theorem}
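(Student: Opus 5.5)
The plan is to prove the two equalities separately. I would establish the first matrix form by induction along the Farey tree, using the word-tree recursion transported to matrices. The second, combinatorial form I would obtain by factoring $M_t$ into elementary matrices read off from the sign string of $L_t$.

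For the first equality, I would prove by induction on the depth in $\mathrm F\mathbb T$ a strengthened statement about the whole triple $(M_r,M_t,M_s)$ attached to a Farey vertex $(r,t,s)$. The statement has three parts:
\begin{itemize}
\item[(a)] each matrix has lower-left entry equal to the corresponding GM number;
\item[(b)] each has trace $-k$ for the appropriate position label, i.e.\ $\operatorname{tr}M_t=-k_t$;
\item[(c)] the product relation $M_rM_tM_s=XYZ$ holds, which follows from Lemma~\ref{prop:presnake-relation-W} and Theorem~\ref{thm:ft-wt}.
\end{itemize}

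The root triple $(X,Y,Z)$ is checked directly, with lower-left entries $(1,k_{\sigma(2)}+2,1)$ and traces $-k_{\sigma(1)},-k_{\sigma(2)},-k_{\sigma(3)}$. Conjugation preserves trace, so the labels are carried as in the GM tree. For the lower-left entry of $BCB^{-1}$, I would use the trace identities in $\mathrm{SL}(2)$ together with (c) to get the exchange relation $m'm_s=m_r^2+k m_r m_t+m_t^2$. Expressing the $(2,1)$-entry via a Fricke-type identity is the computation I expect to be the main obstacle; I would first try writing $BCB^{-1}=B\,C\,(\operatorname{tr}B-B)$ and expanding.

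Once $m_t$ sits in the $(2,1)$-entry and $\operatorname{tr}M_t=-k_t$, write the $(1,1)$-entry as $u$. The $(2,2)$-entry is then $-(u+k_t)$, and $\det=1$ forces the $(1,2)$-entry to be $-(u^2+k_tu+1)/m_t$. It remains to identify $u=u_t$, i.e.\ $0<u<m_t$ and $m_ru\equiv m_s\pmod{m_t}$. For the congruence, I would read off the $(2,1)$-entries of $M_r^{-1}XYZ=M_tM_s$ modulo $m_t$: the $(2,1)$-entry of $M_tM_s$ is $m_t(\ast)-(u+k_t)m_s$, so the congruence comes down to an identity among the known entries of $M_r$ and of $XYZ$. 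The bound $0<u<m_t$ I would obtain from the second equality, since the $N$'s there are positive and monotone under deleting end blocks of the sequence.

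For the second equality, I would factor $M_t$ along $L_t$. Each letter $x^{\pm1},y^{\pm1},z^{\pm1}$ crossed, together with the triangles between crossings, contributes a factor. Regrouping according to the sign string, maximal blocks of equal sign become products of matrices of the form $\begin{bmatrix}a&1\\1&0\end{bmatrix}$, up to a fixed conjugation and sign. This should give $M_t=\pm E\begin{bmatrix}a_1&1\\1&0\end{bmatrix}\cdots\begin{bmatrix}a_n&1\\1&0\end{bmatrix}E'$. The continuant entries then equal order-ideal counts by the continued-fraction identity of \cite[Corollary~13]{banaian-kmarkov} used in Example~\ref{ex:theorem-gm-sequence-two-fifths}. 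A cleaner route may be to prove this by the same Farey induction, using Lemma~\ref{lem:decomposition-lemma}: the sequence for $t$ is the concatenation of the sequences for $r$ and $s$ joined through the middle tile. This yields the standard continuant gluing formula for $N$ of concatenated fences. As a consistency check, $N(a_1,\dots,a_n)=m_t$ must agree with Theorem~\ref{thm:gm-sequence-counts-gm}.
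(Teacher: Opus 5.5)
Your argument for the first matrix form is sound. It is also more explicit than the paper, which does not prove this theorem: it only remarks that the lower-left entries of the transported triples follow the GM exchange relations, and cites \cite{gyoda-maruyama-sato,bana-gyo} for the entry formula (and \cite{banaian-kmarkov} for $N$ as a continued-fraction numerator).

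The computation you flag as the obstacle is short once you use $P:=XYZ=\begin{bmatrix}-1&K\\0&-1\end{bmatrix}$. Since $(PM)_{21}=(MP)_{21}=-M_{21}$ and $(M^{-1})_{21}=-M_{21}$ in $\mathrm{SL}(2)$, the relation $ABC=P$ gives $C_{21}=(AB)_{21}$, and hence
\[
(BCB^{-1})_{21}=(A^{-1}PB^{-1})_{21}=Km_rm_t-k_Am_t-k_Bm_r-m_s .
\]
This is the Vieta partner of $m_s$ in the labelled GM equation satisfied by the parent triple. Similarly, $(M_rM_t)_{21}=(PM_s^{-1})_{21}=m_s$ gives $m_ru+(M_r)_{22}m_t=m_s$, which is the congruence directly. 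Your route via $M_r^{-1}P=M_tM_s$ also works, but it additionally needs $u(u+k_t)\equiv-1\pmod{m_t}$ from the determinant.

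The genuine gap is the second equality. You also rely on it for $0<u<m_t$, so the identification $u=u_t$ is not settled either. Granting that $N$ equals the continuant, the second equality says
\[
M_t=L(a_1)U(a_2)\cdots L(a_{n-1})U(a_n)\,S,\qquad L(a)=\begin{bmatrix}1&0\\a&1\end{bmatrix},\ U(a)=\begin{bmatrix}1&a\\0&1\end{bmatrix},\ S=\begin{bmatrix}0&-1\\1&0\end{bmatrix},
\]
with one unipotent factor per sign. Your first route asserts this (``should give'') without any mechanism. Note that $M_t$ has exactly one factor per crossed edge and none for triangles, so you would need an explicit local gauge lemma that absorbs the triangle signs.

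Your Farey route rests on an incorrect description. By Proposition~\ref{prop:presnake-relation}, the $r$- and $s$-parts of $L_t$ read $\omega_r^{-1}$ and $\omega_s^{-1}$, and $M_t=M_r^{-1}PM_s^{-1}$, so nothing simply concatenates. For $(1,2,0,\mathrm{id})$ one has $s^\circ(\frac13)=(4,1,3,4)$ and $s^\circ(\frac12)=(4,4)$, but $s^\circ(\frac25)=(4,3,1,4,5,1,3,4)$. Here the $r$-part appears reversed (its central edge flips), and the first triangle of the $s$-part turns from $-$ to $+$.

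What the induction actually needs is the following:
\begin{itemize}
\item[(i)] the inversion formula $M_r^{-1}=-L(a_n)U(a_{n-1})\cdots U(a_1)S$, which follows from $S^{-1}U(c)S=L(-c)$, $S^{-1}L(c)S=U(-c)$ and $S^2=-I$;
\item[(ii)] the junction identity $SP\,L(b)=L(K-1)U(1)L(b-1)$;
\item[(iii)] a geometric gluing lemma
\[
s^\circ(t)=(a_n,\dots,a_1,\,K-1,\,1,\,b_m-1,b_{m-1},\dots,b_1)
\]
for $s^\circ(r)=(a_1,\dots,a_n)$ and $s^\circ(s)=(b_1,\dots,b_m)$ with $r,s$ interior. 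Blocks merge when $b_m=1$; for example $s^\circ(\frac23)=(4,4,5,4)$.
\end{itemize}
The boundary cases $r=\frac01$ or $s=\frac10$ must be handled separately. The lemma in (iii) is the real content and has to be proved from the picture, in the style of Proposition~\ref{prop:presnake-relation}; it is absent from your plan.
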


\begin{example}\label{ex:mm-matrix-entries-two-fifths}
We continue Example~\ref{ex:theorem-gm-sequence-two-fifths}.  Take
$(k_1,k_2,k_3)=(1,2,0)$, $\sigma=\mathrm{id}$, and
$t=\frac{2}{5}$.  Then $K=6$, and the matrices assigned to the letters are
\[
X=\begin{bmatrix}-1&-1\\1&0\end{bmatrix},\quad
Y=\begin{bmatrix}1&-1\\4&-3\end{bmatrix},\quad
Z=\begin{bmatrix}1&-2\\1&-1\end{bmatrix}.
\]
Using the word
\[
\omega_{\frac{2}{5}}=yzy^{-1}z^{-1}y^{-1}xyzyz^{-1}y^{-1}
\]
from Example~\ref{ex:continued-fraction-Markov}, substitution gives
\[
\begin{aligned}
M_{\frac{2}{5}}
&=YZY^{-1}Z^{-1}Y^{-1}XYZYZ^{-1}Y^{-1}\\
&=\begin{bmatrix}1930&-453\\8227&-1931\end{bmatrix}.
\end{aligned}
\]
The denominator of the continued fraction $[4;3,1,4,5,1,3,4]$ also
explains the characteristic number.  In the equal-parameter setting, this
denominator interpretation follows from
\cite[Theorem~7.25]{gyoda-maruyama-sato}.  The same argument applies to the
present three-parameter setting, although we do not reproduce the proof here.
In the present notation, Theorem~\ref{thm:mm-matrix-entries} gives
\[
 N\left(3,1,4,5,1,3,4\right)
 =\operatorname{den}[4;3,1,4,5,1,3,4]
 =1930,
\]
so this computation gives the characteristic number.
On the other hand, Example~\ref{ex:theorem-gm-sequence-two-fifths} gives
$m_{\frac{2}{5}}=8227$, and the position label of $m_{\frac{2}{5}}$ is $1$, so
$k_{\frac{2}{5}}=k_1=1$.  Hence Theorem~\ref{thm:mm-matrix-entries}
reads
\[
M_{\frac{2}{5}}=
\begin{bmatrix}
u_{\frac{2}{5}}&-\dfrac{u_{\frac{2}{5}}^2+u_{\frac{2}{5}}+1}{8227}\\[6pt]
8227&-(u_{\frac{2}{5}}+1)
\end{bmatrix}.
\]
Comparing the entries gives
\[
u_{\frac{2}{5}}=1930,\qquad
\frac{1930^2+1930+1}{8227}=453.
\]
Thus the $(2,1)$-entry is the GM number $8227$, while the
$(1,1)$-entry is the characteristic number $1930$, as asserted by
Theorem~\ref{thm:mm-matrix-entries}.
\end{example}

\section{\texorpdfstring{Obtaining Cohn words from $\omega_t$}{Obtaining Cohn words from omega t}}\label{sec:mm-cohn}
This section specializes the word comparison to the classical Cohn-word side. 
\subsection{Cohn words and Cohn tree}
Let $\mathfrak M_3$ be the free (non-commutative) monoid of rank $3$ generated by $\{p,q,r\}$.
We consider the following tree, whose vertices are $\mathfrak M_3^3$:
\begin{itemize}\setlength{\leftskip}{-10pt}
\item[(1)] The root vertex is $(p,q,r)\in \mathfrak M_3^3$.
\item[(2)] For a vertex $(a,b,c)$, its two children are defined as
\[\begin{xy}(0,0)*+{(a,b,c)}="1",(-20,-10)*+{(a,ab,b)}="2",(20,-10)*+{(b,bc,c).}="3", \ar@{-}"1";"2"\ar@{-}"1";"3"
\end{xy}\]
\end{itemize}
This tree is called the \emph{Cohn word tree}, and we denote it by $\mathrm{CoW}\mathbb T$.
We call an element of $\mathfrak M_3$ appearing in $\mathrm{CoW}\mathbb T$ a \emph{Cohn word}.

The first few Cohn words in $\mathrm{CoW}\mathbb T$ are as follows:
\begin{align*}
\begin{xy}(0,0)*+{(p,q,r)}="0",(20,20)*+{\left(q,qr,r\right)}="1",(20,-20)*+{\left(p,pq,q\right)}="1'",(40,30)*+{\left(qr,qr^2,r\right)}="20",(40,10)*+{\left(q,q^2r,qr\right)}="21",(40,-10)*+{\left(pq,pq^2,q\right)}="22",(40,-30)*+{\left(p,p^2q,pq\right)}="23",(80,35)*+{\left(qr^2,qr^3,r\right)}="30",(80,25)*+{\left(qr,qrqr^2,qr^2\right)}="31",(80,15)*+{\left(q^2r,q^2rqr,qr\right)}="32",(80,5)*+{\left(q,q^3r,q^2r\right)}="33",(80,-5)*+{\left(pq^2,pq^3,q\right)}="34",(80,-15)*+{\left(pq,pqpq^2,pq^2\right)}="35",(80,-25)*+{\left(p^2q,p^2qpq,pq\right)}="36",(80,-35)*+{\left(p,p^3q,p^2q\right)}="37",\ar@{-}"0";"1"\ar@{-}"0";"1'"\ar@{-}"1";"20"\ar@{-}"1";"21"\ar@{-}"1'";"22"\ar@{-}"1'";"23"\ar@{-}"20";"30"\ar@{-}"20";"31"\ar@{-}"21";"32"\ar@{-}"21";"33"\ar@{-}"22";"34"\ar@{-}"22";"35"\ar@{-}"23";"36"\ar@{-}"23";"37"
\end{xy}.
\end{align*}

We now recall the geometric realization of Cohn words.
\begin{remark}
Usually, Cohn words are defined using two letters. In this text, however, we use three letters, $p,q,r$, for compatibility with other properties. The Cohn word tree used in texts such as \cite{aig} corresponds to the full subtree of the Cohn word tree in this paper rooted at $(p,pq,q)$.
\end{remark}
For $t\in(0,\infty)\cap\mathbb Q$, let $c(t)$ denote the element of $\mathfrak M_3$ in $\mathrm{CoW}\mathbb T$ located at the unique vertex $(r,t,s)$ of the Farey tree whose middle component is $t$.  For the boundary cases, set $c(\frac{0}{1})=p$ and $c(\frac{1}{0})=r$.
We recall a geometric construction of $c(t)$.

Set $c_{\frac{0}{1}}:=p$ and $c_{\frac{1}{0}}:=r$.  For an irreducible fraction $t=\frac{a}{b}\in (0,\infty)\cap\mathbb Q$, we define a word $c_t$ as follows.
Let $L_t$ be the line segment in $\mathbb{R}^2$ from $(0,0)$ to $(b,a)$, oriented from $(0,0)$ to $(b,a)$.  Each time $L_t$ intersects a line of the integer lattice, take the lattice point adjacent to that intersection on the right-hand side of the oriented line.  If the intersection is itself a lattice point, we use the convention that this lattice point is recorded; repeated lattice points will be discarded below.  Record the resulting lattice points in order as
\[
 P_1,P_2,\dots,P_n.
\]
We set $P_1=(0,0)$ and $P_n=(b,a)$, and we discard repeated lattice points if they occur.  Connect the points $P_1,P_2,\dots,P_n$ in order by line segments.  To each segment we assign a letter according to its slope:
\[
 0\mapsto p,\qquad 1\mapsto q,\qquad \infty\mapsto r.
\]
The sequence of these letters, read in order, is defined to be $c_t$.

\begin{remark}
The word $c_t$ is a Christoffel word in the classical terminology.
\end{remark}

\begin{example}
The word $c_{\frac{2}{5}}$ is $p^2qpq$, and $c_{\frac{5}{2}}$ is $qrqr^2$. See Figure~\ref{fig:christoffel}.
\begin{figure}[ht]
    \centering
    \begin{tikzpicture}
    \draw[thick] (0,0) grid (5,2);
    \draw[red,thick] (0,0) -- (5,2);
    \draw[line width=2pt] (0,0) -- (2,0)-- (3,1) -- (4,1) -- (5,2);
    \node at (0.5,-0.5) {$p$};
    \node at (1.5,-0.5) {$p$};
    \node at (2.5,-0.5) {$q$};
    \node at (3.5,-0.5) {$p$};
    \node at (4.5,-0.5) {$q$};
    \node[red] at (1.5,0.6) {\rotatebox{20}{$>$}};
    \node[red] at (3.5,1.4) {\rotatebox{20}{$>$}};
\end{tikzpicture}
\hspace{1cm}
 \begin{tikzpicture}
    \draw[thick] (0,0) grid (2,5);
    \draw[red,thick] (0,0) -- (2,5);
    \draw[line width=2pt] (0,0) -- (1,1)-- (1,2) -- (2,3) -- (2,5);
    \node at (2.5,0.5) {$q$};
    \node at (2.5,1.5) {$r$};
    \node at (2.5,2.5) {$q$};
    \node at (2.5,3.5) {$r$};
    \node at (2.5,4.5) {$r$};
    \node[red] at (0.6,1.5) {\rotatebox{65}{$>$}};
    \node[red] at (1.4,3.5) {\rotatebox{65}{$>$}};
\end{tikzpicture}
    \caption{The words $c_{\frac{2}{5}}$ and $c_{\frac{5}{2}}$}
    \label{fig:christoffel}
\end{figure}
\end{example}

We use the following standard description of Cohn words.

\begin{theorem}[{\cite[Theorem~7.6]{aig}}]\label{thm:cohn-geometric}
For any irreducible fraction $t\in(0,\infty)\cap\mathbb Q$, we have $c_t=c(t)$.  The boundary cases $t=\frac{0}{1}$ and $t=\frac{1}{0}$ hold by the defining conventions $c_{\frac{0}{1}}=c(\frac{0}{1})=p$ and $c_{\frac{1}{0}}=c(\frac{1}{0})=r$.
\end{theorem}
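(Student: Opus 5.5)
The plan is to run an induction over the Farey tree, in the same spirit as the proof of Theorem~\ref{thm:ft-wt}. Concretely, I will show that the map $(r,t,s)\mapsto(c_r,c_t,c_s)$ sends the root of $\mathrm{F}\mathbb T$ to the root of $\mathrm{CoW}\mathbb T$ and commutes with the two child operations. By the uniqueness in Proposition~\ref{prop:property-farey}(2), this gives $c_t=c(t)$.

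\textbf{Base case.} For the root $(\frac01,\frac11,\frac10)$, the segment $L_1$ from $(0,0)$ to $(1,1)$ passes through no interior lattice crossings other than its endpoints. The recorded points are therefore $P_1=(0,0)$ and $P_2=(1,1)$, which gives one segment of slope $1$. So $c_1=q$, and together with the conventions $c_{\frac01}=p$ and $c_{\frac10}=r$ the root triple is $(p,q,r)$.

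\textbf{Key step: the concatenation identity.} For every Farey triple $(r,t,s)$ in $\mathrm{F}\mathbb T$ I will prove
\[
c_t=c_r\,c_s .
\]
This suffices. The left child of $(r,t,s)$ is $(r,r\oplus t,t)$, and the identity applied to it gives $c_{r\oplus t}=c_rc_t$, which matches the Cohn rule $(a,b,c)\mapsto(a,ab,b)$. The right child works the same way.

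The identity is geometric. Write $r=\frac ab$, $s=\frac cd$ and $t=\frac{a+c}{b+d}$, with $bc-ad=1$ as in the proof of Lemma~\ref{lem:height-in-pg}.
\begin{itemize}
\item The lattice point $(b,a)$ lies strictly to the right of $L_t$, at the minimal possible distance. Indeed, the cross product of $(b,a)$ and $(b+d,a+c)$ equals $bc-ad=1$, so no lattice point lies strictly between $L_t$ and the parallel line through $(b,a)$.
\item Consequently the region between $L_t$ and the polygonal path $(0,0)\to(b,a)\to(b+d,a+c)$ contains no lattice points in its interior (Pick's theorem, area $\tfrac12$).
\item On the piece $0\le x\le b$, the lattice points adjacent on the right of $L_t$ coincide with those adjacent on the right of $L_r$. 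On the piece $b\le x\le b+d$, they coincide with those of the translate $(b,a)+L_s$.
\item Hence the recorded sequence of points for $t$ is the sequence for $r$ followed by the translated sequence for $s$, sharing the point $(b,a)$. Reading slopes then gives $c_t=c_rc_s$.
\end{itemize}

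\textbf{Boundary cases.} When $r=\frac01$ or $s=\frac10$, the same argument applies with $L_{\frac01}$ (respectively $L_{\frac10}$) taken to be a unit horizontal (respectively vertical) step contributing $p$ (respectively $r$). For example, $t=\frac1{n+1}$ gives $c_t=p\,c_{\frac1n}$, which can be checked directly.

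\textbf{Main obstacle.} I expect the hardest step to be the precise matching of recorded points near the splitting point $(b,a)$. The paper's rule takes the right-adjacent lattice point at each crossing, has a special convention when a crossing is itself a lattice point, and discards repetitions. I must check that all of these interact correctly there, and that no spurious point appears between the two halves.

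To handle this, I would translate the recording rule into the equivalent description of the lower Christoffel path. That is the lattice path of steps $(1,0)$, $(1,1)$, $(0,1)$ which stays weakly to the right of, and below, $L_t$ and encloses no lattice points with it. I would prove this equivalence once, using a unimodularity argument like Lemma~\ref{lem:height-in-pg}. The concatenation property then follows from the empty-region statement above. This is essentially the standard proof of \cite[Theorem~7.6]{aig}, which I would follow.
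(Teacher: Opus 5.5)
Your strategy is the right one: induct down the Farey tree using the factorizations $c_{r\oplus t}=c_rc_t$ and $c_{t\oplus s}=c_tc_s$, and prove them with the unimodular empty triangle. The paper gives no proof of its own; it only cites Aigner, and your argument is the standard one for Christoffel words.

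However, the key identity as you state it is false. You claim $c_t=c_rc_s$ for \emph{every} Farey triple of $\mathrm{F}\mathbb T$. At the root this reads $q=c_{\frac11}=c_{\frac01}c_{\frac10}=pr$.

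Your sketch does not detect this. Take $L_{\frac01}$ and $L_{\frac10}$ as unit horizontal and vertical steps. Then the triangle $(0,0),(1,0),(1,1)$ has area $\tfrac12$, and every bullet appears to go through.

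The bullet that actually fails is the third one. The point $(b,a)$ is recorded by $L_t$ only because of how $L_t$ crosses the lattice lines there:
\begin{itemize}
\item for $t<1$, $L_t$ meets $x=b$ at height $a+\frac1{b+d}$, which is a non-lattice crossing only when $b+d\ge2$;
\item for $t>1$, $L_t$ meets $y=a$ at abscissa $b-\frac1{a+c}$, which is a non-lattice crossing only when $a+c\ge2$.
\end{itemize}
At the root both of these crossings are endpoints of the segment, so $(1,0)$ is never recorded.

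The reformulation in your last paragraph has the same defect. If all three steps $(1,0),(1,1),(0,1)$ are allowed, a lattice path weakly below $L_t$ enclosing no lattice points is not unique. For $t=1$ both $q$ and $pr$ qualify, and for $t=\frac12$ both $pq$ and $p^2r$ do. So the equivalence ``recording rule $=$ that path'' cannot be proved as stated.

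The repair is short, and your induction only uses the identity at children, never at the root. For a non-root triple, all entries lie in $[0,1]$ or all lie in $[1,\infty]$, since these are the two subtrees of the root. Hence $c_r,c_t,c_s$ are all words in $\{p,q\}$ or all words in $\{q,r\}$. Moreover, $b+d\ge2$ when $t<1$, and $a+c\ge2$ when $t>1$.

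Next, make the recording rule explicit.
\begin{itemize}
\item For $t=\frac{a+c}{b+d}<1$, the recorded points are exactly $(i,\lfloor it\rfloor)$ for $0\le i\le b+d$. Horizontal crossings add nothing new, because $\lfloor\lceil j/t\rceil t\rfloor=j$.
\item For $t>1$, the recorded points are $(\lceil j/t\rceil,j)$ for $0\le j\le a+c$.
\end{itemize}

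Your empty-triangle argument then proves precisely $\lfloor it\rfloor=\lfloor ir\rfloor$ for $0\le i\le b$, and $\lfloor (b+i)t\rfloor=a+\lfloor is\rfloor$ for $0\le i\le d$. When $t>1$ it proves the analogous identities with ceilings. Either way, this is $c_t=c_rc_s$.

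Alternatively, for $t<1$ the shear $(x,y)\mapsto(x-y,y)$ turns $c_t$ into the ordinary two-letter lower Christoffel word of slope $\frac{a+c}{b-a+d-c}$. For $t>1$ use $(x,y)\mapsto(x,y-x)$. This reduces the claim to the standard factorization of Christoffel words.
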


We shall use the following consequence.

\begin{theorem}[{\cite{aig}}]
For any irreducible fraction $t\in ((0,1)\cup(1,\infty))\cap\mathbb Q$, there exists a palindrome word $w$ such that
\begin{itemize}\setlength{\leftskip}{-10pt}
    \item[(1)] $c_t=pwq$ if $t\in (0,1)$,
    \item[(2)] $c_t=qwr$ if $t\in (1,\infty)$.
\end{itemize}
\end{theorem}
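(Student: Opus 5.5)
We argue by induction along the Cohn word tree $\mathrm{CoW}\mathbb T$, using Theorem~\ref{thm:cohn-geometric} to identify $c_t$ with $c(t)$. We prove a slightly stronger statement. For a vertex $(a,b,c)$ of $\mathrm{CoW}\mathbb T$ whose middle entry corresponds to $t\ne\frac11$, we show $b=pwq$ with $w$ a palindrome if $t<1$, and $b=qwr$ with $w$ a palindrome if $t>1$. We also track the shape of the outer entries: each outer entry is either a boundary letter ($p$ or $r$), the word $q$ itself, or a word of the same form $pw'q$ or $qw'r$.

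The key identity is the classical Christoffel fact. Let $(r',t,s')$ be a Farey triple with $t$ in $(0,1)$. Write $c(r')=pw_1q$ and $c(s')=pw_2q$ (with the conventions $c(\frac01)=p$ and $c(\frac11)=q$ handled separately). Then
\[
c(t)=c(r')c(s')=pw_1qpw_2q=p\,(w_2\,qp\,w_1)\,q .
\]
Hence we must show $w_1qpw_2=w_2qpw_1$, since the palindromic middle is $w_1qpw_2$ and its reverse is $w_2qpw_1$ when $w_1,w_2$ are palindromes.

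So the stronger inductive invariant is: for each Farey triple $(r',t,s')$ with both outer words in the palindromic form, $w_1qpw_2=w_2pqw_1$. More precisely, the middle palindrome is $w_t=w_1qpw_2=w_2pqw_1$. With this form, the two children are handled as follows.
\begin{itemize}
\item For the left child $(r',r'\oplus t,t)$, we get $c=pw_1q\,pw_tq$. The middle is $w_1qpw_t=w_1qpw_2pqw_1$, which is visibly a palindrome.
\item For the right child $(t,t\oplus s',s')$, the middle is $w_tqpw_2=w_2pqw_1qpw_2$, again a palindrome.
\end{itemize}
We also need the new "two factorizations" identity for the children. For the left child it reads $w_1qpw_t=w_tpqw_1$, that is,
\[
w_1qpw_2pqw_1=w_2pqw_1\,pq\,w_1 ,
\]
which must be rechecked. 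This is where one uses the alternative factorization of $w_t$ rather than literal equality. The cleaner route is to carry the invariant "$w_t$ is a palindrome and $w_t=w_1qpw_2$," then verify that reversal gives $w_2pqw_1$ and that both children's middles are palindromes as computed above. This needs only palindromicity of $w_1,w_2,w_t$, so no extra identity is required.

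Base cases: $c(\frac12)=pq$ gives $w=\varnothing$; $c(\frac21)=qr$ gives $w=\varnothing$. Triples touching $\frac01$ give $c(\frac1{n})=p^{n}q$, where $w=p^{n-1}$ is a palindrome. The case $t>1$ is the mirror image under $p\leftrightarrow r$ combined with word reversal, which swaps the roles of $q$ and the endpoints.

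The main obstacle is bookkeeping at the boundary triples, where an outer entry is $p$, $q$ or $r$ rather than of the form $pwq$. There the concatenation $c(r')c(s')$ does not split as above. Examples are the triple $(\frac01,t,s')$, where $c(t)=p\cdot pw_2q$, and the triple $(\frac1{n},\dots,\frac11)$, where $c(t)=p^nq\cdot q$. Both must be checked separately, but in each case the middle word is a palindrome by direct inspection.
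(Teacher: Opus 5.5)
The paper does not prove this statement; it quotes it from Aigner. Your argument therefore has to stand on its own.

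\textbf{The core step is correct.} Take a triple $(r',t,s')$ with all entries in $(0,1)$.
\begin{itemize}
\item The relation $c(t)=c(r')c(s')$ gives $w_t=w_{r'}qpw_{s'}$.
\item If $w_{r'},w_t,w_{s'}$ are palindromes, reversing gives $w_t=w_{s'}pqw_{r'}$.
\item The middles of the two children are then $w_{r'}qpw_t=w_{r'}qpw_{s'}pqw_{r'}$ and $w_tqpw_{s'}=w_{s'}pqw_{r'}qpw_{s'}$, and both are palindromes.
\item Reversal composed with $p\leftrightarrow r$ sends $c(t)$ to $c(1/t)$, which gives the case $t>1$.
\end{itemize}
This route is longer than the direct geometric proof from the definition of $c_t$. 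There, for $t=\frac{a}{b}<1$, the path vertices $(i,\lfloor ia/b\rfloor)$ with $1\le i\le b-1$ are exchanged by $(x,y)\mapsto(b-x,a-1-y)$, and the first and last steps are $p$ and $q$. In return, your induction also yields the factorization $w_t=w_{r'}qpw_{s'}=w_{s'}pqw_{r'}$.

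\textbf{Errors in the write-up.}
\begin{itemize}
\item The algebra in your key identity is wrong. One has $pw_1qpw_2q=p(w_1qpw_2)q$, not $p(w_2qpw_1)q$. The reverse of $w_1qpw_2$ is $w_2pqw_1$, not $w_2qpw_1$. The paragraph about a ``two factorizations identity for the children'' is unnecessary: palindromicity of the three middles is the entire invariant.
\item Your boundary data are incorrect. In fact $c(\frac{1}{n})=p^{n-1}q$. The right boundary branch consists of the triples $(\frac{n-1}{n},\frac{n}{n+1},\frac{1}{1})$, with $c(\frac{n}{n+1})=pq^{n}$. There is no Farey triple $(\frac{1}{n},t,\frac{1}{1})$ for $n\ge3$. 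The word $p^nq\cdot q$ you list has middle $p^{n-1}q$, which is not a palindrome, so ``direct inspection'' fails as stated.
\item The boundary problem is not confined to the boundary triples themselves. The right child of $(\frac{0}{1},\frac{1}{n},\frac{1}{n-1})$ and the left child of $(\frac{n-1}{n},\frac{n}{n+1},\frac{1}{1})$ have all entries in $(0,1)$. Yet the inductive step for them uses the $w$ of a boundary letter, which you have not defined.
\end{itemize}

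\textbf{Repair.} Work in the free group on $p,q$ and set $w_t:=p^{-1}c(t)q^{-1}$ for all $t\in[0,1]$, so that $w_{0/1}=q^{-1}$ and $w_{1/1}=p^{-1}$. Then:
\begin{itemize}
\item $w_t=w_{r'}qpw_{s'}$ holds at every vertex of the subtree rooted at $(\frac{0}{1},\frac{1}{2},\frac{1}{1})$.
\item The root triple $(q^{-1},1,p^{-1})$ is fixed by the reversal anti-automorphism.
\item Your inductive step therefore runs verbatim, with no case distinctions.
\item For $t\in(0,1)$, $c(t)$ begins with $p$ and ends with $q$, so $w_t$ is a positive word fixed by reversal, i.e.\ a palindrome.
\end{itemize}
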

We now compare $\omega_t$ with the corresponding Cohn word.  The next lemma records the elementary local pattern of edge crossings that will be used in the comparison.

\begin{lemma}\label{lem:cohn-subword-pattern}
Let $t\in(0,\infty)\cap\mathbb Q$ be a positive reduced fraction.
\begin{itemize}\setlength{\leftskip}{-10pt}
\item[(1)] If $0<t<1$, then each subword of $\omega_t$ lying between two consecutive occurrences of $z^{\pm1}$ is either of the form $y^{\pm1}$ or of the form $y^{\pm1}x^{\pm1}y^{\pm1}$.
\item[(2)] If $1<t<\infty$, then each subword of $\omega_t$ lying between two consecutive occurrences of $x^{\pm1}$ is either of the form $y^{\pm1}$ or of the form $y^{\pm1}z^{\pm1}y^{\pm1}$.
\end{itemize}
\end{lemma}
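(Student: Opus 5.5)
The plan is to argue directly from the geometric definition of $\omega_t$ as the reading of edges crossed by $L_t$ in $\widetilde{\mathbb R^2}$, rather than from the word-tree recursion. For part (1), with $0<t=\frac pq<1$, we have $q>p$. The segment $L_t$ runs from $(0,0)$ to $(q,p)$ and crosses the open vertical lines $x=1,\dots,q-1$ at non-lattice points, since $\gcd(p,q)=1$. Each such crossing contributes one letter $z^{\pm1}$. Consecutive occurrences of $z^{\pm1}$ therefore correspond to consecutive vertical lines $x=i$ and $x=i+1$ with $1\le i\le q-2$. The subword between them records exactly the horizontal and diagonal edges crossed while $L_t$ traverses the open unit column $i<x<i+1$. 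So the whole proof reduces to classifying what happens inside one column.

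Inside the column, the segment enters at height $y_i=ip/q$ and leaves at height $y_{i+1}=y_i+p/q$, and the total rise is less than $1$. There are two cases.

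\begin{enumerate}
\item \emph{$\lfloor y_i\rfloor=\lfloor y_{i+1}\rfloor=j$.} No horizontal line is crossed, so no $x^{\pm1}$ appears. The segment stays inside the unit square $[i,i+1]\times[j,j+1]$, entering through its left side and leaving through its right side. The diagonal $x+y=i+j+1$ separates the left-side interior from the right-side interior, so the segment crosses it exactly once. The subword is a single $y^{\pm1}$.
\item \emph{$\lfloor y_{i+1}\rfloor=\lfloor y_i\rfloor+1=j+1$.} Exactly one horizontal line $y=j+1$ is crossed, at some point $(x_0,j+1)$ with $i<x_0<i+1$, since heights at integer abscissae are never integers. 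In the lower square the segment goes from the left side to the top side. The diagonal of that square joins $(i,j+1)$ to $(i+1,j)$, and the left side and the top side (minus their common corner) lie in opposite closed halves of it, so it is crossed exactly once. Symmetrically, in the upper square the segment goes from the bottom side to the right side. The diagonal joining $(i,j+2)$ to $(i+1,j+1)$ again separates these, so it is crossed once. The crossings occur in order diagonal, horizontal, diagonal, giving $y^{\pm1}x^{\pm1}y^{\pm1}$.
\end{enumerate}

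Since the rise is less than $1$, no other case occurs. The sign exponents are irrelevant to the statement, so the local pattern is established.

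The one delicate point is the degenerate possibility that $L_t$ passes through a lattice vertex, where several edges meet and the reading order or multiplicity could become ambiguous. This is excluded because $\gcd(p,q)=1$: the only lattice points on $L_t$ are its endpoints. One must also check that no diagonal is crossed tangentially or at an endpoint. This holds because diagonal endpoints are lattice points, so every crossing is transversal at an interior point. With these checks the column analysis is rigorous.

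Part (2) then follows from the reflection symmetry already used in the proof of Proposition~\ref{prop:presnake-relation}(2). Reflecting across the line of slope $1$ sends $t$ to $1/t$, interchanges horizontal and vertical edges, preserves diagonal edges, and only alters exponents. So $x^{\pm1}\leftrightarrow z^{\pm1}$ letterwise, up to reversal of the reading and inversion of signs, neither of which affects the shape of the patterns. Alternatively, one can repeat the argument with unit rows $j<y<j+1$ in place of columns.
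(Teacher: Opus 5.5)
Your proof is correct and takes the same approach as the paper. You classify the crossings inside each unit column between consecutive vertical crossings: one diagonal if $L_t$ stays in a row, and diagonal, horizontal, diagonal if it rises one row. You get (2) from the reflection that interchanges horizontal and vertical edges, or from the analogous row argument. Your version adds the explicit inequalities and the non-degeneracy checks via $\gcd(p,q)=1$, which the paper's short proof leaves implicit.
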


\begin{proof}
Assume first that $0<t<1$.  Between two successive crossings of vertical lattice edges, the segment lies in one vertical strip.  In such a strip it crosses a diagonal edge once if it stays in the same horizontal row, and it crosses two diagonal edges with one horizontal edge between them if it moves to the next row.  Translating these three edge types into the letters $x,y,z$ gives the two forms in (1).  The proof of (2) is the reflected argument: use horizontal strips instead of vertical strips, so that the roles of horizontal and vertical edges are interchanged while diagonal edges remain diagonal.
\end{proof}

The next result shows that the palindrome part of the Cohn word is obtained directly from $\omega_t$.

\begin{theorem}\label{thm:mm-cohn}
 For the word $\omega_t$ attached to an irreducible fraction $t\in((0,1)\cup(1,\infty))\cap\mathbb Q$, define a word $w$ in the alphabet $\{p,q,r\}$ by the following rule.
\begin{description}
  \item[\textnormal{Case $0 < t < 1$}]
  For each subword of $\omega_t$ lying between two consecutive occurrences of $z^{\pm1}$, assign
  \[
  p \text{ if the subword is } y^{\pm1}, \quad
  q \text{ if the subword is } y^{\pm1}x^{\pm1}y^{\pm1}.
  \]
  The word $w$ is obtained by concatenating these assigned letters in the order in which they appear in $\omega_t$.

  \item[\textnormal{Case $1 < t < \infty$}]
  For each subword of $\omega_t$ lying between two consecutive occurrences of $x^{\pm1}$, assign
  \[ q \text{ if the subword is } y^{\pm1}z^{\pm1}y^{\pm1},
  \quad r \text{ if the subword is } y^{\pm1}.
  \]
  The word $w$ is again obtained by concatenating the assigned letters in order.
\end{description}

Then the Cohn word $c_t$ is given by
\[c_t=
\begin{cases}
pwq, & \text{if } 0 < t < 1, \\[4pt]
qwr, & \text{if } 1 < t < \infty.
\end{cases}
\]
\end{theorem}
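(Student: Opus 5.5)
We prove the case $0<t<1$ directly from the geometry and obtain the case $1<t<\infty$ from it by the reflection across the line of slope $1$. This is the symmetry already used in Proposition~\ref{prop:presnake-relation}(2) and Lemma~\ref{lem:cohn-subword-pattern}(2). It interchanges horizontal and vertical edges, so $x\leftrightarrow z$, and it interchanges $p\leftrightarrow r$ while fixing $q$. It also reverses the orientation of the word, which turns $pwq$ into $qw'r$ with $w'$ the reversed, relabelled word.

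Let $t=\frac{a}{b}$ with $0<a<b$. The segment $L_t$ crosses the interior vertical lines $X=1,\dots,b-1$, so $\omega_t$ contains exactly $b-1$ letters $z^{\pm1}$. Hence there are $b-2$ subwords lying between consecutive occurrences of $z^{\pm1}$, one for each strip $j\le X\le j+1$ with $1\le j\le b-2$. By Lemma~\ref{lem:cohn-subword-pattern}(1), the subword for strip $j$ is $y^{\pm1}x^{\pm1}y^{\pm1}$ exactly when $L_t$ crosses a horizontal line $Y=k$ with $j<X<j+1$. Otherwise it is a single $y^{\pm1}$. The crossing cannot occur at an integer abscissa in the interior, since $\gcd(a,b)=1$.

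Next we read off $c_t$ from the same strips. For $0<t<1$, the lattice points $P_i$ in the definition of $c_t$ are $(j,\lceil ja/b\rceil)$ for $j=0,\dots,b$, after discarding repetitions. Thus $c_t$ has length $b$, and its $(j+1)$-st letter is $q$ if $\lceil (j+1)a/b\rceil>\lceil ja/b\rceil$, which happens exactly when $L_t$ crosses a horizontal line in $j<X\le j+1$. Otherwise that letter is $p$. Consequently the $(j+1)$-st letter of $c_t$, for $1\le j\le b-2$, is $q$ precisely when the $j$-th $z$-gap subword of $\omega_t$ has length three. The first letter of $c_t$ corresponds to the strip $0\le X\le 1$, which contains no horizontal crossing because $a/b<1$ and $0<Y<1$ there, so it is $p$. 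The last letter corresponds to the strip $b-1\le X\le b$, which ends at the lattice point $(b,a)$, so it is $q$. This gives $c_t=pwq$, and it agrees with the palindrome theorem of \cite{aig}. That theorem is not needed here, except as a consistency check.

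The main obstacle is the boundary bookkeeping, in two places. First, we must confirm that the ``between consecutive $z^{\pm1}$'' subwords correspond bijectively to the strips $1\le j\le b-2$, with the initial segment before the first $z$ and the final segment after the last $z$ excluded. Second, we must check that the ceiling description of the $P_i$ agrees with the right-hand-side lattice-point convention, including the convention at the endpoint $(b,a)$. We would verify both by the strip-by-strip reading of $L_t$ used in the proof of Lemma~\ref{lem:cohn-subword-pattern}, and check against $t=\frac{2}{5}$: there $\omega_{\frac{2}{5}}$ gives the gap subwords $y^{-1}$, $y^{-1}xy$, $y$, so $w=pqp$ and $c_{\frac{2}{5}}=p\,pqp\,q=p^2qpq$, as expected.
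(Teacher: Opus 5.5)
Your route is the paper's: gaps between consecutive $z^{\pm1}$ correspond to interior vertical strips, Lemma~\ref{lem:cohn-subword-pattern} gives the shape of each gap, and the end strips supply the outer letters. Your version is more explicit than the paper's sketch. However, your formula for the Cohn path is wrong, and it is wrong exactly where the statement is sensitive.

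For a segment of positive slope oriented left to right, the right-hand side is the side \emph{below} it. So the recorded lattice points are $(j,\lfloor ja/b\rfloor)$, not $(j,\lceil ja/b\rceil)$. With the ceiling, your equivalence fails at both end strips:
\begin{itemize}
\item at the first strip, $\lceil a/b\rceil=1>0$ would make the first letter $q$;
\item at the last strip, $\lceil (b-1)a/b\rceil=a=\lceil a\rceil$ would make the last letter $p$.
\end{itemize}
The ceiling points trace the path above $L_t$ and produce the upper Christoffel word $qwp$; for $t=\frac{2}{5}$ you would get $qpqpp$ instead of $p^2qpq$. With the floor, the $(j+1)$-st letter is $q$ if and only if $\lfloor (j+1)a/b\rfloor>\lfloor ja/b\rfloor$, i.e.\ if and only if $L_t$ meets a horizontal line at some $X\in(j,j+1]$. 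This is the criterion you actually apply to the end strips. For $1\le j\le b-2$ it agrees with the open-strip criterion, because $\gcd(a,b)=1$ rules out horizontal crossings at interior integer abscissae. Replacing $\lceil\cdot\rceil$ by $\lfloor\cdot\rfloor$ therefore repairs the case $0<t<1$ completely. This is precisely the check you flagged as the main obstacle, and it does not come out the way you wrote it.

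For $1<t<\infty$, watch what the reflection does to the reading order. The reflection $(X,Y)\mapsto(Y,X)$ carrying $L_t$ to $L_{1/t}$ fixes the origin. It therefore sends the gaps of $\omega_t$ to the gaps of $\omega_{1/t}$ in the \emph{same} order. On the Cohn side, however, it sends the lower path to the upper one, so $c_{1/t}$ is the \emph{reversal} of the relabelled $c_t$. The two sides match only because the rule-word $w$ is a palindrome. This is easy to supply: forgetting exponents, the sequence of generators in $\omega_t$ is a palindrome by Proposition~\ref{prop:symmetric-decomposition}, and the rule ignores exponents. Still, it is a step you need, contrary to your remark that palindromicity is only a consistency check. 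Alternatively, run the strip argument directly with horizontal strips and the right-hand points $(\lceil kb/a\rceil,k)$ for $t=\frac{a}{b}>1$, which is essentially how the paper treats this case.
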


\begin{proof}
The letters in $\omega_t$ record, in order, the three types of edges crossed by $L_t$.  By Lemma~\ref{lem:cohn-subword-pattern}, the subwords specified in the statement are exactly the local pieces occurring between consecutive boundary letters.  These local pieces correspond to the letters in the middle part of the classical Cohn word: for $0<t<1$ the pieces between vertical boundary letters give $p$ or $q$, and for $1<t<\infty$ the pieces between horizontal boundary letters give $q$ or $r$.  The endpoint letters give the outer factors $p,q$ or $q,r$.  The geometric description of Cohn words in Theorem~\ref{thm:cohn-geometric} then identifies the resulting word with $c_t$.
\end{proof}

\begin{example}\label{ex:mm-cohn-two-fifths}
We again take $t=\frac{2}{5}$.  From Example~\ref{ex:continued-fraction-Markov},
\[
\omega_{\frac{2}{5}}=yzy^{-1}z^{-1}y^{-1}xyzyz^{-1}y^{-1}.
\]
Since $0<\frac{2}{5}<1$, we look at the subwords between consecutive occurrences of
$z^{\pm1}$.  These subwords are $y^{-1}, y^{-1}xy, y$.
By the rule in Theorem~\ref{thm:mm-cohn}, they correspond respectively to $
p, q, p$.
Thus $w=pqp$, and hence
\[
c_{\frac{2}{5}}=pwq=p^2qpq.
\]
This agrees with the direct geometric computation of the Cohn word in
Figure~\ref{fig:christoffel}.
\end{example}

\begin{remark}
The remaining central slope and the boundary cases are handled by the defining conventions: $\omega_{\frac{1}{1}}=y$ corresponds to $q=c_{\frac{1}{1}}$, while $\omega_{\frac{0}{1}}=x$ and $\omega_{\frac{1}{0}}=z$ correspond to $p=c_{\frac{0}{1}}$ and $r=c_{\frac{1}{0}}$, respectively.
\end{remark}

\section{Generalized Cohn matrices and endpoint-completed words}\label{sec:gc-extended}

In this section we put the preceding word construction together with the generalized Cohn matrix construction.  The point is that the shifted segment $\overline L_t$ carries two compatible readings.  The sign reading gives a generalized strongly admissible sequence and hence a generalized Cohn matrix.  The edge-letter reading gives the endpoint-completed word $\overline{\omega}_t$.  We explain both readings and then spell out how the generalized Cohn matrix is obtained from the endpoint-completed word.

\subsection{Generalized Cohn matrices}\label{subsec:gc-matrices}
Fix $(k_1,k_2,k_3)\in\mathbb Z_{\ge0}^3$ and $\sigma\in\mathfrak S_3$, and set
\[
K:=3+k_1+k_2+k_3.
\]
The initial generalized Cohn matrices are
\begin{align}
C_{\frac{0}{1}}&=\begin{bmatrix}K&-Kk_{\sigma(1)}-1\\1&-k_{\sigma(1)}\end{bmatrix},\nonumber\\
C_{\frac{1}{1}}&=\begin{bmatrix}K(k_{\sigma(2)}+2)-k_{\sigma(2)}-1&K-1\\k_{\sigma(2)}+2&1\end{bmatrix},\label{eq:gc-initial}\\
C_{\frac{1}{0}}&=\begin{bmatrix}K-k_{\sigma(3)}-1&K-k_{\sigma(3)}-2\\1&1\end{bmatrix}.\nonumber
\end{align}
For a Farey triple $(r,t,s)$, write the corresponding labeled vertex as
\[
((m_r,i_r),(m_t,i_t),(m_s,i_s)).
\]
Put $k_r:=k_{i_r}$, and define
\[
D_r:=\begin{bmatrix}k_r&Kk_r\\0&k_r\end{bmatrix}.
\]
The generalized Cohn matrices are then defined recursively by
\begin{equation}\label{eq:gc-recursion}
C_{r\oplus t}=C_rC_t-D_s,
\qquad
C_{t\oplus s}=C_tC_s-D_r.
\end{equation}
We call $C_t$ the \emph{$(k_1,k_2,k_3,\sigma)$-generalized Cohn matrix}, or simply the \emph{GC matrix}, attached to $t$.

The entries of $C_t$ are described by the corresponding generalized Markov number and characteristic number.  We record this as a theorem.  In the equal-parameter case $k_1=k_2=k_3=k$, this is the generalized Cohn matrix formula of \cite[Theorem~7.27(2)]{gyoda-maruyama-sato}, rewritten in the present lower-row convention; the three-parameter matrixization is treated in \cite[Theorem~9.3]{bana-gyo}.
\begin{theorem}\label{thm:gc-explicit}
Let $t\in(0,\infty)\cap\mathbb Q$.  Let $m_t$ be the GM number at $t$, let $u_t$ be its characteristic number, and let $k_t=k_{i_t}$ be the coefficient selected by the position label of $m_t$.  Then
\begin{equation}\label{eq:gc-explicit}
C_t=
 \begin{bmatrix}
 Km_t-k_t-u_t & \displaystyle\frac{Km_tu_t-k_tu_t-u_t^2-1}{m_t}\\[6pt]
 m_t&u_t
 \end{bmatrix}.
\end{equation}
\end{theorem}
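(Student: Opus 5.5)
We argue by induction along the Farey tree $\mathrm F\mathbb T$, using the recursion \eqref{eq:gc-recursion} together with the arithmetic of the labeled GM tree. Write $C_t$ in the shape \eqref{eq:gc-explicit} and define $\widetilde C_t$ to be the right-hand side of \eqref{eq:gc-explicit}. The proof has three steps.

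\emph{Step 1: base cases.} We check directly that $C_{\frac01},C_{\frac11},C_{\frac10}$ in \eqref{eq:gc-initial} have the form \eqref{eq:gc-explicit}, with suitable boundary values of $u$ and the labels from the root vertex. For $t=\frac11$ we have $m=k_{\sigma(2)}+2$ and $u=1$ (since $m_r=m_s=1$), and the bottom row is $(k_{\sigma(2)}+2,\,1)$. The top-left entry is $Km-k_t-1$, as required. For the top-right entry, $(Km-k_t-1-1)/m=K-1$ because $k_t+2=m$. For the boundary slopes, take $u_{\frac01}=-k_{\sigma(1)}$ and $u_{\frac10}=1$. These agree with the lower rows of $C_{\frac01},C_{\frac10}$ and of $M_t$ in Theorem~\ref{thm:mm-matrix-entries}, and the top-left entries then match.

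\emph{Step 2: reduction to the bottom row.} We show that every $C_t$ has determinant $1$, trace $Km_t-k_t$, and lower-left entry $m_t$. Once these hold, the lower-right entry $u$ determines the whole matrix: the top-left entry is $Km_t-k_t-u$, and the top-right entry is then forced by $\det=1$ to be $(Km_tu-k_tu-u^2-1)/m_t$. So it remains to identify the bottom row $(m_t,u_t)$ and the trace.

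\emph{Step 3: the inductive step.} Suppose $(r,t,s)$ satisfies the statement, and consider the child $r\oplus t$. Since $D_s$ has zero lower row, the bottom row of $C_{r\oplus t}$ equals the bottom row of $C_rC_t$. This gives the lower-left entry
\[
m_r(Km_t-k_t-u_t)+u_rm_t,
\]
and the lower-right entry is computed similarly. We compare these with $m_{r\oplus t}=(m_r^2+k_{i_s}m_rm_t+m_t^2)/m_s$ and with the congruence defining $u_{r\oplus t}$. For this we use the Vieta relation $m_s+m_{s'}=Km_rm_t-k_{i_s}(\cdots)$, which comes from the GM equation, and the congruences $m_ru_t\equiv m_s \pmod{m_t}$ and $m_su_r\equiv m_t\pmod{m_r}$.

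For the trace and determinant we use the Fricke-type identity $\operatorname{tr}(AB)+\operatorname{tr}(AB^{-1})=\operatorname{tr}A\operatorname{tr}B$, together with the explicit subtraction of $D_s$, whose trace is $2k_s$. The determinant of $C_rC_t-D_s$ is expanded using the fact that $D_s$ is $k_s$ times a unipotent matrix.

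\emph{Main obstacle and alternative route.} The hardest part is verifying that the lower-right entry is exactly $u_{r\oplus t}$ with $0<u<m$, rather than merely congruent to it. The simplest way around this is to avoid entrywise induction: prove instead that $C_t=J\,M_t^{-1}$-type conjugate/sign correction $\overline M_t$ relation already available, namely that $C_t$ and $M_t$ share the bottom-row data. Concretely, we would show by the same induction that the bottom row of $C_t$ is $(m_t,\,K m_t\cdot 0+u_t)$, where $u_t$ is the $(1,1)$-entry of $M_t$. Then Theorem~\ref{thm:mm-matrix-entries} identifies this entry as the characteristic number, and its range $0<u_t<m_t$ comes from the fence-poset count $N(a_2,\dots,a_n)<N(a_1,\dots,a_n)$.
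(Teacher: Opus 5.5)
The paper gives no proof of this theorem (it is imported from \cite{gyoda-maruyama-sato} and \cite{bana-gyo}), so your argument has to stand on its own, and it has a genuine gap. Steps~1 and~2 are fine, but the inductive step is never carried out. Moreover, the exactness problem you flag for the lower-right entry already blocks the lower-left entry. Vieta for the GM equation reads $m_s+m_{r\oplus t}=Km_rm_t-k_tm_r-k_rm_t$ (the linear coefficients are $k_{i_t},k_{i_r}$, not $k_{i_s}$). So the required identity $m_r(Km_t-k_t-u_t)+u_rm_t=m_{r\oplus t}$ is equivalent to the \emph{exact} relation $m_ru_t-(u_r+k_r)m_t=m_s$. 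Your congruences give this only modulo $m_rm_t$. Note also that the second one, $m_su_r\equiv m_t\pmod{m_r}$, is not the definition of $u_r$ (which refers to the Farey triple centred at $r$) and itself needs an induction. The a priori range of the left-hand side has length close to $2m_rm_t$, so the bounds $0<u<m$ do not force equality. The trace and determinant claims have the same defect: given the bottom row, both reduce to $\operatorname{tr}(C_rC_t)=Km_{r\oplus t}+k_s$, and the Fricke identity only trades this for $\operatorname{tr}(C_rC_t^{-1})$, which you never control. The ``alternative route'' does not repair this. As written it is not well formed, and ``by the same induction'' reintroduces the same problem. The only $\overline M_t$-relation in the paper, Theorem~\ref{thm:gc-from-extended-word}, is proved \emph{from} the statement you are proving, so invoking it is circular.

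What is missing is to run the induction on the monodromy side, where everything is exact. Evaluating Lemma~\ref{prop:presnake-relation-W} and the child rules of $\mathrm W\mathbb T$ (via Theorem~\ref{thm:ft-wt}) gives, for every Farey triple $(r,t,s)$,
\[
M_rM_tM_s=W:=XYZ=\begin{bmatrix}-1&K\\0&-1\end{bmatrix},\qquad
M_{r\oplus t}=M_tM_sM_t^{-1},\qquad
M_{t\oplus s}=M_t^{-1}M_rM_t .
\]
Put $\overline M_t:=WM_t^{-1}$. Then $\overline M_r\overline M_t=WM_tM_sM_t^{-1}$ and $\overline M_{r\oplus t}=WM_tM_s^{-1}M_t^{-1}$. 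Cayley--Hamilton therefore gives $\overline M_r\overline M_t+\overline M_{r\oplus t}=(\operatorname{tr}M_s)W=-k_sW$, because $\operatorname{tr}M_s=-k_s$ by Theorem~\ref{thm:mm-matrix-entries} (and $\operatorname{tr}X=-k_{\sigma(1)}$, $\operatorname{tr}Z=-k_{\sigma(3)}$ at the boundary). Likewise $\overline M_t\overline M_s+\overline M_{t\oplus s}=-k_rW$. Take $J=\mathrm{diag}(-1,1)$ and $C'_t:=-J\overline M_tJ$. The identity $-k_sJWJ=D_s$ turns the two relations into $C'_{r\oplus t}=C'_rC'_t-D_s$ and $C'_{t\oplus s}=C'_tC'_s-D_r$. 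A direct check shows that $C'_{\frac01},C'_{\frac11},C'_{\frac10}$ are the matrices in \eqref{eq:gc-initial}. Hence $C_t=-JWM_t^{-1}J$ for all $t$; this is Theorem~\ref{thm:gc-from-extended-word}, now obtained without circularity. Reading off the entries of $M_t$ from Theorem~\ref{thm:mm-matrix-entries} then yields \eqref{eq:gc-explicit}, including the identification of the lower-right entry with the characteristic number, with no congruence or size estimates at all.
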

We use this lower-row placement of $m_t$ and $u_t$ as the Cohn-compatible convention for the generalized Cohn matrix $C_t$.

\subsection{\texorpdfstring{Strongly admissible sequences and generalized Cohn matrices from $\overline L_t$}{Strongly admissible sequences and generalized Cohn matrices from overline Lt}}\label{subsec:sequence-to-gc}
Let $t=\frac{p}{q}\in(0,\infty)\cap\mathbb Q$ be a positive reduced fraction.  In $\widetilde{\mathbb{R}^{2}}$ take $A=(0,0)$ and $B=(q,p)$, and let $L_t$ be the oriented segment from $A$ to $B$.  Choose $\varepsilon>0$ sufficiently small so that the shifted segment has the same order of edge and triangle events away from the endpoints and does not pass through any lattice vertex or any midpoint of an edge.  Translate the whole segment horizontally by the vector $(-\varepsilon,0)$.  The resulting segment, from $(-\varepsilon,0)$ to $(q-\varepsilon,p)$, is denoted by $\overline L_t$.  We call it the endpoint-completed segment associated with $t$; see Figure~\ref{fig:overlineLt} for the case $t=\frac{2}{5}$.

The sequence attached to $\overline L_t$ is obtained by applying the sign-reading rules of Subsection~\ref{subsec:gm-sequences-from-line-segments} to this shifted segment.  Thus all triangle and edge contributions along the interior of $\overline L_t$ are read exactly as for $L_t$.  We only modify the treatment of the two endpoints.  Orient $\overline L_t$ from its lower-left endpoint to its upper-right endpoint.  The first lower-left edge met at the initial endpoint is counted as crossed, whereas the last upper-right edge containing the terminal endpoint is not counted as crossed.  After this half-open endpoint convention is imposed, the resulting signs are recorded in their order along $\overline L_t$.

\begin{figure}[ht]
    \centering
    \begin{tikzpicture}[x=1.2cm,y=1.2cm]
    \definecolor{segmentred}{RGB}{255,0,0}
    \tikzset{
      figgrid/.style={black,line width=0.7pt,line cap=butt,line join=miter},
      figsegment/.style={draw=segmentred,line width=1.0pt,line cap=round,line join=round}
    }
    \draw[figgrid] (0,0) -- (3,0);
    \draw[figgrid] (0,1) -- (5,1);
    \draw[figgrid] (2,2) -- (5,2);
    \draw[figgrid] (0,0) -- (0,1);
    \draw[figgrid] (1,0) -- (1,1);
    \draw[figgrid] (2,0) -- (2,2);
    \draw[figgrid] (3,0) -- (3,2);
    \draw[figgrid] (4,1) -- (4,2);
    \draw[figgrid] (5,1) -- (5,2);
    \draw[figgrid] (0,1) -- (1,0);
    \draw[figgrid] (1,1) -- (2,0);
    \draw[figgrid] (2,1) -- (3,0);
    \draw[figgrid] (2,2) -- (3,1);
    \draw[figgrid] (3,2) -- (4,1);
    \draw[figgrid] (4,2) -- (5,1);
    \draw[figsegment] (-0.18,0) -- (4.82,2);
    \end{tikzpicture}
    \caption{The shifted segment $\overline L_t$ associated with $t=\frac{2}{5}$}
    \label{fig:overlineLt}
\end{figure}

We define the \emph{$(k_1,k_2,k_3,\sigma)$-generalized strongly admissible sequence} attached to $t$ by compressing this signed list.  Namely, if the signs obtained from the above reading form maximal consecutive blocks of equal signs of lengths $
a_0,a_1,\dots,a_\ell$,
then we set
\[
s_{k_1,k_2,k_3,\sigma}(t):=(a_0,a_1,\dots,a_\ell).
\]
The resulting sequence $s_{k_1,k_2,k_3,\sigma}(t)$ is the generalized strongly admissible sequence associated with $\overline L_t$.

For example, when $(k_1,k_2,k_3)=(1,2,0)$, $\sigma=\mathrm{id}$, and $t=\frac{2}{5}$, the shifted segment carries the signs shown in Figure~\ref{fig:overlineLt-signed}.  Reading the maximal runs of equal signs gives
\[
 s_{k_1,k_2,k_3,\sigma}\left(\frac{2}{5}\right)=(5,1,3,3,1,5,4,1,3,4).
\]

\begin{figure}[ht]
\centering
\begin{tikzpicture}[x=0.01cm,y=0.01cm,scale=0.5]
\definecolor{annblue}{RGB}{255,0,0}
\tikzset{
  gridline/.style={black,line width=0.7pt,line cap=butt,line join=miter},
  gammaL/.style={draw=annblue,line width=1.1pt,line cap=round,line join=round},
  midpoint/.style={circle,fill=gray,inner sep=0pt,minimum size=1pt},
  enddot/.style={circle,fill=black,inner sep=0pt,minimum size=1.8pt},
  triwhite/.style={circle,fill=white,draw=none,inner sep=0.45pt,minimum size=8.0pt,outer sep=0pt},
  edgewhite/.style={circle,fill=white,draw=none,inner sep=0.35pt,minimum size=7.6pt,outer sep=0pt},
  trisign/.style={text=annblue,font=\scriptsize\bfseries,inner sep=0.45pt,minimum size=5.2pt,outer sep=0pt},
  edgesign/.style={text=annblue,font=\scriptsize\bfseries,inner sep=0.35pt,minimum size=4.9pt,outer sep=0pt}
}
\begin{scope}[shift={(300,170)}, x={(260,0)}, y={(0,260)}]
\begin{scope}
  \clip (-1,0) rectangle (5,2);
  \foreach \i in {-1,...,4}{
    \foreach \j in {0,1}{
      \draw[gridline] (\i,{\j+1}) -- ({\i+1},\j);
    }
  }
  \foreach \i in {-1,...,5}{
    \draw[gridline] (\i,0) -- (\i,2);
  }
  \foreach \j in {0,1,2}{
    \draw[gridline] (-1,\j) -- (5,\j);
  }
  \foreach \i in {-1,...,4}{
    \foreach \j in {0,1,2}{
      \node[midpoint] at ({\i+0.5},\j) {};
    }
  }
  \foreach \i in {-1,...,5}{
    \foreach \j in {0,1}{
      \node[midpoint] at (\i,{\j+0.5}) {};
    }
  }
  \foreach \i in {-1,...,4}{
    \foreach \j in {0,1}{
      \node[midpoint] at ({\i+0.5},{\j+0.5}) {};
    }
  }
\end{scope}
\coordinate (A) at (0,0);
\coordinate (B) at (5,2);
\def\xleftshift{0.18}
\coordinate (ALshift) at ({-\xleftshift},0);
\coordinate (BLshift) at ({5-\xleftshift},2);
\draw[gammaL] (ALshift) -- (BLshift);
\foreach \x/\y/\sig in {-0.5/0.18/-,-0.18/0.3/-,0.20/0.22/+,0.78/0.78/-,1.20/0.22/+,1.72/0.86/-,2.18/0.55/+,2.70/0.70/+,2.30/1.18/-,2.78/1.4/-,3.15/1.03/+,3.82/1.80/-,4.18/1.6/+,4.68/1.78/+}{%
  \node[triwhite] at (\x,\y) {};
}
\foreach \x/\y/\sig in {-0.3/-0.1/-,-0.58/0.40/-,-0.36/0.40/-,0.31/0.48/-,0.54/0.48/-,1.36/0.45/+,1.60/0.45/+,2.36/0.54/+,2.60/0.54/+,2.58/1/+,2.32/1.47/-,2.55/1.47/-,3.46/1.4/-,3.70/1.4/-,4.41/1.6/+,4.65/1.6/+}{%
  \node[edgewhite] at (\x,\y) {};
}
\foreach \x/\y/\sig in {-0.5/0.18/-,-0.18/0.3/-,0.20/0.22/+,0.78/0.78/-,1.20/0.22/+,1.72/0.86/-,2.18/0.55/+,2.70/0.70/+,2.30/1.18/-,2.78/1.4/-,3.15/1.03/+,3.82/1.80/-,4.18/1.6/+,4.68/1.78/+}{%
  \node[trisign] at (\x,\y) {$\sig$};
}
\foreach \x/\y/\sig in {-0.3/-0.1/-,-0.58/0.40/-,-0.36/0.40/-,0.31/0.48/-,0.54/0.48/-,1.36/0.45/+,1.60/0.45/+,2.36/0.54/+,2.60/0.54/+,2.58/1/+,2.32/1.47/-,2.55/1.47/-,3.46/1.4/-,3.70/1.4/-,4.41/1.6/+,4.65/1.6/+}{%
  \node[edgesign] at (\x,\y) {$\sig$};
}
\node[enddot] at (A) {};
\node[enddot] at (B) {};
\end{scope}
\end{tikzpicture}
\caption{The shifted segment $\overline L_t$ for $t=\frac{2}{5}$ with the signs assigned by the triangle-crossing and edge-crossing rules.}
\label{fig:overlineLt-signed}
\end{figure}

We now recall how the corresponding matrix is obtained from this run-length sequence.  For a finite sequence $S=(a_0,a_1,\dots,a_\ell)$ of positive integers, set
\begin{equation}\label{eq:continued-fraction-matrix-F}
F_S:=
\begin{bmatrix}a_0&1\\1&0\end{bmatrix}
\begin{bmatrix}a_1&1\\1&0\end{bmatrix}
\cdots
\begin{bmatrix}a_\ell&1\\1&0\end{bmatrix}.
\end{equation}
The generalized Cohn matrix is obtained by applying this continued-fraction matrix product to the generalized strongly admissible sequence.
\begin{theorem}[{\cite{bana-gyo,gyoda2025}}]\label{thm:gc-from-strongly-admissible}
For every positive reduced fraction $t\in(0,\infty)\cap\mathbb Q$, one has
\begin{equation}\label{eq:gc-cf}
C_t=F_{s_{k_1,k_2,k_3,\sigma}(t)}.
\end{equation}
Equivalently, if $s_{k_1,k_2,k_3,\sigma}(t)=(a_0,a_1,\dots,a_\ell)$, then
\begin{equation}\label{eq:gc-cf-N}
C_t=
\begin{bmatrix}
N(a_0,\dots,a_\ell)&N(a_0,\dots,a_{\ell-1})\\
N(a_1,\dots,a_\ell)&N(a_1,\dots,a_{\ell-1})
\end{bmatrix}.
\end{equation}
\end{theorem}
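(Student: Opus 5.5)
The approach is to reduce the identity \eqref{eq:gc-cf} to the explicit description of $C_t$ in Theorem~\ref{thm:gc-explicit}, rather than to run an induction on the recursion \eqref{eq:gc-recursion}. With $s_{k_1,k_2,k_3,\sigma}(t)=(a_0,\dots,a_\ell)$, the standard continuant identity gives the second form \eqref{eq:gc-cf-N} of $F_S$ immediately. Indeed, $N(a_0,\dots,a_\ell)$ is the numerator of $[a_0;a_1,\dots,a_\ell]$, by the same fact \cite[Corollary~13]{banaian-kmarkov} used in Example~\ref{ex:theorem-gm-sequence-two-fifths}, and the matrix product \eqref{eq:continued-fraction-matrix-F} is the usual convergent matrix. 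So it suffices to prove three identities:
\[
N(a_1,\dots,a_\ell)=m_t,\qquad N(a_1,\dots,a_{\ell-1})=u_t,\qquad N(a_0,\dots,a_\ell)=Km_t-k_t-u_t.
\]
The fourth entry then follows from determinants. We have $\det C_t=1$ by a direct computation from \eqref{eq:gc-explicit}, while $\det F_S=(-1)^{\ell+1}$. Part of the argument is therefore to check that $\ell$ is odd, i.e.\ that the signed list along $\overline L_t$ begins with $-$ and ends with $+$. This is read off from the endpoint convention: the first counted lower-left edge has its midpoint on the left, and the last event is a $+$ triangle.

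The heart of the proof is a comparison between the signed list along $\overline L_t$ and the signed list along $L_t$ defining $s^{\circ}_{k_1,k_2,k_3,\sigma}(t)=(b_1,\dots,b_n)$. Away from the endpoints the two lists agree, since $\varepsilon$ is small and the shift avoids lattice vertices and midpoints. Near the endpoints the differences are local:
\begin{itemize}
\item at the lower-left end, $\overline L_t$ picks up the horizontal edge through $(-\varepsilon,0)$ together with the adjacent triangle and diagonal edge to the left of the origin;
\item at the upper-right end, it loses the events inside the last tile, because the terminal edge is not counted.
\end{itemize}
I would describe these modifications explicitly, in terms of $k_{\sigma(1)},k_{\sigma(2)},k_{\sigma(3)}$, by a case analysis on the first and last tiles of $\mathcal{PG}(t)$. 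This reuses the observation in the proof of Proposition~\ref{prop:presnake-relation}(1) that the first tile always carries the letters $y,z$, and similarly at the other end. The outcome should be that $(a_1,\dots,a_\ell)$ is obtained from $(b_1,\dots,b_n)$, or from its reversal, by changing one or two boundary blocks. The example $t=\frac{2}{5}$, with $s^\circ=(4,3,1,4,5,1,3,4)$ and $s=(5,1,3,3,1,5,4,1,3,4)$, is a check on the precise rule.

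Having this comparison, the lower-row identities follow from Theorem~\ref{thm:mm-matrix-entries}, which gives $N(b_1,\dots,b_n)=m_t$ and $N(b_2,\dots,b_n)=u_t$. I would also use that $N$ is invariant under reversal, since a fence poset and its reversal have the same order ideals, and the continuant recursion
\[
N(\dots,c+d,\dots)=\ldots
\]
in its standard splitting form, which lets me absorb the modified boundary blocks.

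For the top-left entry, I would use the central symmetry of $\overline L_t$ about the midpoint of $L_t$ (shifted). Reversing the segment reverses the sign list and transposes $F_S$. Combined with $N(a_0,\dots,a_\ell)=a_0N(a_1,\dots,a_\ell)+N(a_2,\dots,a_\ell)$, where the initial block $a_0$ should equal $K$ minus a small boundary correction, this should yield $Km_t-k_t-u_t$. The value $\det=1$ then pins down the remaining entry.

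I expect the main obstacle to be the endpoint bookkeeping in the comparison step. The difficulty is determining exactly which boundary edges and triangles, with which multiplicities $k_{\sigma(i)}$, are gained and lost, and verifying that the resulting block manipulations turn $(m_t,u_t)$ into the required lower row and $a_0$ into the right quantity, uniformly for $t<1$, $t=1$, and $t>1$. If this direct comparison becomes unwieldy, my fallback is induction on $\mathrm{F}\mathbb T$. Using Lemma~\ref{lem:decomposition-lemma}, $\overline L_{r\oplus t}$ decomposes as $\overline L_r$, one tile, and $\overline L_t$. I would then show that concatenating the signed lists corresponds to $F_{S_r}F_{S_t}-D_s$, where the correction $D_s$ comes from the merging of blocks at the junction tile.
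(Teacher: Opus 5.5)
This statement is not proved in the paper; it is imported from \cite{bana-gyo,gyoda2025}. Deriving it from Theorems~\ref{thm:mm-matrix-entries} and~\ref{thm:gc-explicit} by comparing the run-length sequences of $L_t$ and $\overline L_t$ is a sound strategy, and your continuant and determinant reductions are fine. (The list does begin with $-$ even when $k_{\sigma(1)}=0$, because the leftmost triangle is $-$.) But the comparison, which carries the whole argument, rests on a false claim: the two lists do \emph{not} agree away from the endpoints. Since $\gcd(p,q)=1$, the centre $(\frac{q}{2},\frac{p}{2})$ of $L_t$ is the midpoint of an edge. That edge gets $-$ for $L_t$, because its midpoint lies on the segment, and $+$ for $\overline L_t$, so its copies flip sign. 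This is the sign-level version of $\alpha\mapsto\alpha^{-1}$ in Lemma~\ref{lem:endpoint-completed-word}, and it is why the blocks $4,5$ of $s^{\circ}(\frac{2}{5})$ become $5,4$ in $s(\frac{2}{5})$.

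Your endpoint bookkeeping is also wrong. Nothing is lost at the upper-right end: both segments finish in the same $+$ triangle and record no terminal edge. At the lower-left, $\overline L_t$ gains the horizontal edge, both triangles of $[-1,0]\times[0,1]$, the diagonal edge and the vertical edge on $x=0$. All of these carry $-$, giving exactly $K-1$ signs. In addition, the first triangle of $L_t$ loses its forced $-$ and becomes $+$. Finally, your top-left argument via a central symmetry of $\overline L_t$ fails. The lattice is symmetric about $(\frac{q}{2},\frac{p}{2})$, not about the midpoint of $\overline L_t$, and the half-open endpoint rule and the triangle rule are orientation dependent.

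The missing idea is the antisymmetry of the $L_t$-list under the $180^\circ$ rotation about $(\frac{q}{2},\frac{p}{2})$. This rotation preserves edge types, reverses the reading order and swaps the two sides. The forced $-$ on the first triangle matches the $+$ on the last one. So reading the list backwards with all signs negated returns it unchanged, except on the copies of the central edge. Combined with the changes above, this gives, for $s^{\circ}(t)=(b_1,\dots,b_n)$,
\[
s_{k_1,k_2,k_3,\sigma}(t)=(K-1,\,1,\,b_n-1,\,b_{n-1},\dots,b_1).
\]
When $b_n=1$, which happens for $t\ge1$, this is to be read as $(K-1,\,b_{n-1}+1,\,b_{n-2},\dots,b_1)$. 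For example, the tail $(3,1,5,4,1,3,4)$ of $s(\frac{2}{5})$ comes from the \emph{reversal} $(4,3,1,5,4,1,3,4)$ of $s^{\circ}(\frac{2}{5})$.

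The reversal is essential, because it makes the last block correspond to $b_1$. Then $N(a_1,\dots,a_{\ell-1})=N(b_2,\dots,b_n)=u_t$. An order-preserving comparison ending in $b_n$ would instead produce $N(b_1,\dots,b_{n-1})=u_t+k_t$. Using $N(1,c,\dots)=N(c+1,\dots)$ and reversal invariance, you get the lower row $(m_t,u_t)$ and
\[
N(a_0,\dots,a_\ell)=(K-1)m_t+m_t-N(b_1,\dots,b_{n-1})=Km_t-k_t-u_t .
\]
So your ``small boundary correction'' is really the $(2,2)$-entry $-(u_t+k_t)$ of $M_t$, which your plan never uses. The top-right entry then follows from your determinant argument, or directly from the $(1,2)$-entry of $M_t$.
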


\begin{example}\label{ex:gc-cf-two-fifths}
Let $(k_1,k_2,k_3)=(1,2,0)$, $\sigma=\mathrm{id}$, and $t=\frac{2}{5}$.
The generalized strongly admissible sequence computed from
$\overline L_{\frac{2}{5}}$ is
\[
s_{1,2,0,\mathrm{id}}\left(\frac{2}{5}\right)
=(5,1,3,3,1,5,4,1,3,4).
\]
Therefore Theorem~\ref{thm:gc-from-strongly-admissible} gives
\[
C_{\frac{2}{5}}
=F_{(5,1,3,3,1,5,4,1,3,4)}
=\begin{bmatrix}47431&11127\\8227&1930\end{bmatrix}.
\]
Thus the $(2,1)$-entry is the GM number $m_{\frac{2}{5}}=8227$ from
Example~\ref{ex:theorem-gm-sequence-two-fifths}, and the $(2,2)$-entry is
the characteristic number $u_{\frac{2}{5}}=1930$ found in
Example~\ref{ex:mm-matrix-entries-two-fifths}.
\end{example}

Thus the construction of $C_t$ from $\overline L_t$ is completely explicit: write down the signs along $\overline L_t$, pass to the run-length sequence $s_{k_1,k_2,k_3,\sigma}(t)$, and multiply the elementary continued-fraction matrices in \eqref{eq:continued-fraction-matrix-F}.  For $t\in(0,\infty)\cap\mathbb Q$, comparing \eqref{eq:gc-cf} with \eqref{eq:gc-explicit} shows that the GM number $m_t$ appears as the $(2,1)$-entry and the characteristic number $u_t$ appears as the $(2,2)$-entry of $F_{s_{k_1,k_2,k_3,\sigma}(t)}$.

\subsection{Endpoint-completed words and generalized Cohn matrices}\label{subsec:extended-mm-word}
We next extend the construction of $\omega_t$ from $L_t$ to $\overline L_t$ and explain how to obtain the generalized Cohn matrix directly from the resulting word.  Read the edges crossed by $\overline L_t$ in order from the lower-left endpoint to the upper-right endpoint, using the same half-open endpoint convention described above: the first edge is counted, while the final edge containing the terminal endpoint is not.  Assign letters by the same edge-letter rule as in Section~\ref{sec:mm-words}:
\begin{itemize}
\item a horizontal edge contributes $x$ or $x^{-1}$,
\item a diagonal edge contributes $y$ or $y^{-1}$,
\item a vertical edge contributes $z$ or $z^{-1}$,
\end{itemize}
where the exponent is determined, as before, by whether the midpoint of the edge lies on the right-hand side of the oriented segment; the chosen shift ensures that no edge midpoint lies on the segment.  The word obtained in this way is denoted by $\overline{\omega}_t$.

The following lemma records the relation between the original word and the endpoint-completed word.

\begin{lemma}\label{lem:endpoint-completed-word}
For every positive reduced fraction $t\in(0,\infty)\cap\mathbb Q$, one has
\begin{equation}\label{eq:extended-mm-word}
\overline{\omega}_t=xyz\omega_t^{-1}.
\end{equation}
\end{lemma}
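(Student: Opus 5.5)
The plan is to compare, crossing by crossing, the edges met by $\overline L_t$ with those met by $L_t$. The word $\overline{\omega}_t$ should turn out to be $\omega_t$ with three letters $xyz$ prepended and its central letter inverted. The identity then follows from the symmetric decomposition in Proposition~\ref{prop:symmetric-decomposition}. If $\omega_t=w\alpha w^{-1}$, then $\omega_t^{-1}=w\alpha^{-1}w^{-1}$, so it suffices to prove
\[
\overline{\omega}_t=xyz\,w\alpha^{-1}w^{-1}.
\]

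First I treat the interior of the segment. Since $\varepsilon$ is chosen small, every edge whose interior is crossed by $L_t^\circ$ is also crossed by $\overline L_t$, in the same order. Any edge midpoint lying strictly on one side of $L_t$ stays on the same side of $\overline L_t$, so these crossings contribute the same letters. The only letters that can change come from edge midpoints lying on $L_t$ itself. Midpoints of edges are points with coordinates in $\frac12\mathbb Z$, not both integers. Since $\gcd(p,q)=1$, the only such point on $L_t^\circ$ is the centre $(q/2,p/2)$. This centre is precisely the crossing giving the central letter $\alpha$ in Proposition~\ref{prop:symmetric-decomposition}, and under the convention there it was read with exponent $+1$. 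After the translation by $(-\varepsilon,0)$ the point $(q/2,p/2)$ lies strictly to the right of $\overline L_t$. Indeed, the cross product of $(q,p)$ with $(q/2+\varepsilon,p/2)-(0,0)$, taking the start of $\overline L_t$ at $(-\varepsilon,0)$, has sign $-p\varepsilon<0$. So that letter becomes $\alpha^{-1}$, and all other interior letters are unchanged.

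Next come the two endpoint neighbourhoods. At the start, $L_t$ emanates from the lattice vertex $(0,0)$ and crosses no edge incident to it. The segment $\overline L_t$, by contrast, starts in the interior of the horizontal edge from $(-1,0)$ to $(0,0)$, which is counted by the half-open convention. It then crosses the diagonal edge from $(-1,1)$ to $(0,0)$ and the vertical edge from $(0,0)$ to $(0,1)$, in this order. A cross-product computation with direction $(q,p)$ puts the three midpoints $(-\tfrac12,0)$, $(-\tfrac12,\tfrac12)$, $(0,\tfrac12)$ on the left of $\overline L_t$ for small $\varepsilon$. These crossings therefore give the prefix $xyz$. At the end, $\overline L_t$ terminates at $(q-\varepsilon,p)$ on the horizontal edge from $(q-1,p)$ to $(q,p)$, which is not counted. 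Just before that it lies inside the triangle with vertices $(q-1,p),(q,p),(q,p-1)$. It never reaches the vertical edge $x=q$, and the diagonal edge of this triangle was already crossed by both segments. Hence no extra letter appears at the end and none is lost.

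Combining the two steps gives $\overline{\omega}_t=xyz\,w\alpha^{-1}w^{-1}=xyz\,\omega_t^{-1}$. The main obstacle I expect is the bookkeeping near the endpoints. One has to make sure that the shift creates exactly the three initial crossings, in the order $x,y,z$, and no spurious crossing near $(q,p)$. This needs a careful look at which triangles adjacent to the two lattice vertices are entered, especially in the extreme cases $t=\frac1n$ and $t=n$, where the first or last tile is thin. I would check those configurations directly, and also confirm the claim that only the centre midpoint lies on $L_t^\circ$, using $\gcd(p,q)=1$.
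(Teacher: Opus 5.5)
Your proposal is correct and follows essentially the same route as the paper: the three initial endpoint crossings give the prefix $xyz$, only the central letter of the symmetric decomposition $\omega_t=w\alpha w^{-1}$ flips to $\alpha^{-1}$, and so the remaining reading is $w\alpha^{-1}w^{-1}=\omega_t^{-1}$. You also supply details the paper leaves implicit, all of which check out: that $\gcd(p,q)=1$ forces $(q/2,p/2)$ to be the only edge midpoint on $L_t^\circ$, the cross-product sign $-p\varepsilon<0$ at the centre, and the verification that nothing is gained or lost near $(q,p)$.
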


\begin{proof}
By the choice of the endpoint-completed segment, the only new initial
crossings are the three endpoint crossings of types horizontal, diagonal, and
vertical.  These give the initial factor $xyz$.  It remains to identify the
word contributed by the remaining, non-endpoint crossings.

Use the symmetric decomposition of Proposition~\ref{prop:symmetric-decomposition}
and write
\[
 \omega_t=w\alpha w^{-1},
\]
where $\alpha\in\{x,y,z\}$ is the letter attached to the central crossing of
$L_t$.  The crossings on the two sides of the central crossing occur in the
same symmetric pairs for $L_t$ and for $\overline L_t$; hence they still
contribute the outer factors $w$ and $w^{-1}$.  The effect of the endpoint
completion is on the central crossing: the central letter $\alpha$ is replaced
by $\alpha^{-1}$.  Therefore the non-endpoint part of the reading of
$\overline L_t$ is $
 w\alpha^{-1}w^{-1}.$
 Since $
 (w\alpha w^{-1})^{-1}=w\alpha^{-1}w^{-1},$
this non-endpoint part is precisely $\omega_t^{-1}$.  Combining this with the
initial endpoint factor $xyz$ gives
\[
 \overline{\omega}_t=xyz\omega_t^{-1}.
\]
\end{proof}

\begin{example}\label{ex:endpoint-completed-word-two-fifths}
For $t=\frac{2}{5}$, Lemma~\ref{lem:endpoint-completed-word} gives
\[
\overline{\omega}_{\frac{2}{5}}=xyzyzy^{-1}z^{-1}y^{-1}x^{-1}yzyz^{-1}y^{-1}.
\]
This edge-reading construction is illustrated in Figure~\ref{fig:extended-word}.
\end{example}

\begin{figure}[ht]
    \centering
    \includegraphics[scale=0.11]{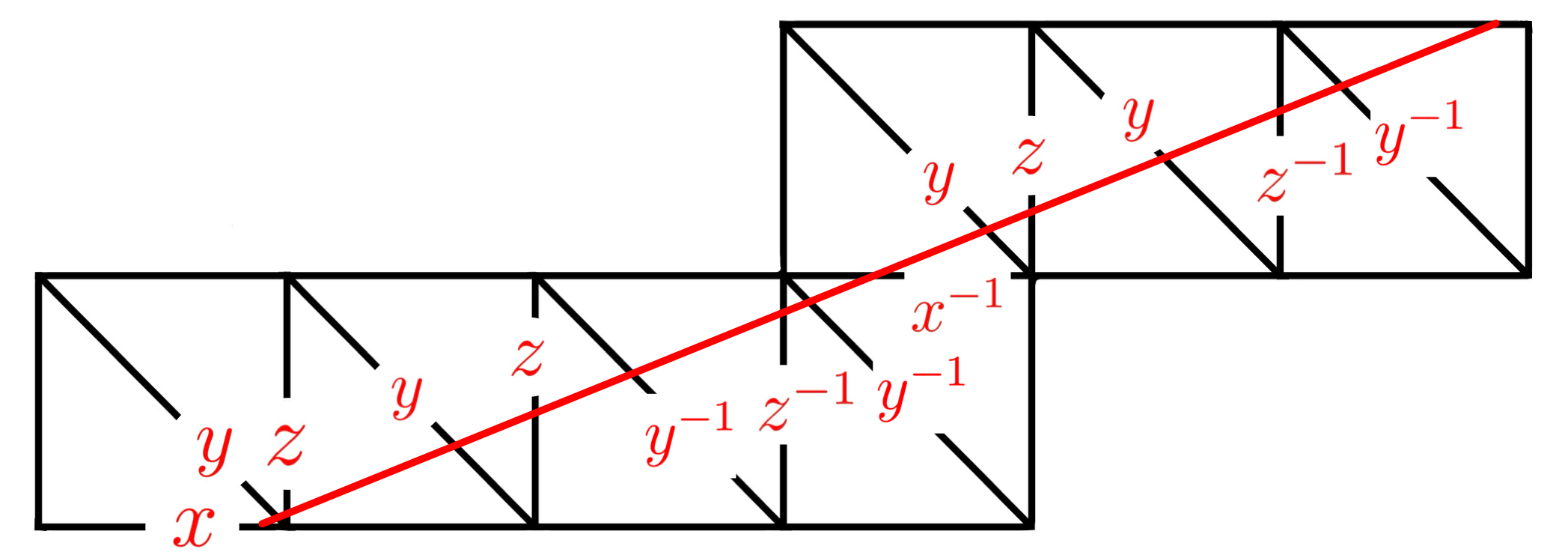}
    \caption{Letters assigned to edges in $\overline L_{\frac{2}{5}}$}
    \label{fig:extended-word}
\end{figure}

Substituting the matrices $X,Y,Z$ of Section~\ref{sec:mm-matrices} into this word gives the \emph{endpoint-completed matrix}
\begin{equation}\label{eq:extended-mm-matrix-definition}
\overline M_t
:=\overline{\omega}_t\big|_{x\mapsto X,\;y\mapsto Y,\;z\mapsto Z}.
\end{equation}
The following theorem gives the direct passage from $\overline M_t$ to the generalized Cohn matrix.  The important point is that one does not need to first form the strongly admissible sequence.  The entries of the generalized Cohn matrix are already present in $\overline M_t$; only a fixed sign correction is required.

\begin{theorem}[Obtaining the generalized Cohn matrix]\label{thm:gc-from-extended-word}
For every positive reduced fraction $t\in(0,\infty)\cap\mathbb Q$, one has
\begin{equation}\label{eq:gc-from-extended-by-switch}
C_t=
\begin{bmatrix}-1&0\\0&1\end{bmatrix}
\overline M_t
\begin{bmatrix}1&0\\0&-1\end{bmatrix}.
\end{equation}
\end{theorem}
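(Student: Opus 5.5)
The plan is to reduce the statement to a direct $2\times 2$ matrix computation. We combine Lemma~\ref{lem:endpoint-completed-word} with the explicit entry formulas of Theorem~\ref{thm:mm-matrix-entries} and Theorem~\ref{thm:gc-explicit}; no further geometry of $\overline L_t$ should be needed.

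First, since the substitution $x\mapsto X$, $y\mapsto Y$, $z\mapsto Z$ is a group homomorphism $\mathfrak F_3\to\mathrm{SL}(2,\mathbb Z)$, Lemma~\ref{lem:endpoint-completed-word} gives $\overline M_t=XYZ\,M_t^{-1}$. Next I compute the constant product $XYZ$. Write $a=k_{\sigma(1)}$, $b=k_{\sigma(2)}$, $c=k_{\sigma(3)}$. One finds
\[
XY=\begin{bmatrix}-a-b-2&a+b+1\\1&-1\end{bmatrix}.
\]
Multiplying by $Z$ gives
\[
XYZ=\begin{bmatrix}-1&a+b+c+3\\0&-1\end{bmatrix}=\begin{bmatrix}-1&K\\0&-1\end{bmatrix}.
\]
This is the point where $K=3+k_1+k_2+k_3$ enters, independently of $\sigma$.

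Then I use Theorem~\ref{thm:mm-matrix-entries}, writing $M_t=\begin{bmatrix}u_t&\beta\\ m_t&-(u_t+k_t)\end{bmatrix}$ with $\beta=-(u_t^2+k_tu_t+1)/m_t$. Since $\det M_t=1$ (each of $X,Y,Z$ lies in $\mathrm{SL}(2,\mathbb Z)$), we have $M_t^{-1}=\begin{bmatrix}-(u_t+k_t)&-\beta\\-m_t&u_t\end{bmatrix}$. Multiplying out,
\[
\overline M_t=\begin{bmatrix}Km_t-u_t-k_t&\beta+Ku_t\\ m_t&-u_t\end{bmatrix}.
\]
Conjugating by the sign matrices in \eqref{eq:gc-from-extended-by-switch} negates the first row and then the second column. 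The $(1,1)$ and $(2,1)$ entries are each negated once or not at all in a compatible way, and the result is
\[
\begin{bmatrix}Km_t-k_t-u_t&\beta+Ku_t\\ m_t&u_t\end{bmatrix}.
\]
Finally, $\beta+Ku_t=(Km_tu_t-k_tu_t-u_t^2-1)/m_t$, so this is exactly the right-hand side of \eqref{eq:gc-explicit}, i.e.\ $C_t$.

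The only point needing care, and the step I expect to be the main obstacle, is the consistency of data between the two cited theorems. Both Theorem~\ref{thm:mm-matrix-entries} and Theorem~\ref{thm:gc-explicit} must refer to the same $m_t$, $u_t$ and $k_t=k_{i_t}$ for the same fixed $(k_1,k_2,k_3,\sigma)$, and the conventions there are stated so that this holds. Granting this, the argument is purely algebraic. The $t=\frac25$ example serves as a sanity check: $\overline M_{2/5}$ should equal $\begin{bmatrix}-47431&11127\\8227&-1930\end{bmatrix}$, which the sign conjugation turns into the matrix of Example~\ref{ex:gc-cf-two-fifths}.
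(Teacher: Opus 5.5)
Your route is the same as the paper's: Lemma~\ref{lem:endpoint-completed-word} gives $\overline M_t=XYZ\,M_t^{-1}$, one computes $XYZ=\begin{bmatrix}-1&K\\0&-1\end{bmatrix}$, inverts $M_t$ using Theorem~\ref{thm:mm-matrix-entries} and $\det M_t=1$, and compares the sign-corrected product with Theorem~\ref{thm:gc-explicit}. Your explicit check of $XYZ$, including its independence of $\sigma$, is correct.

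However, the computation as you wrote it contains two sign errors that happen to cancel. First, the $(1,1)$-entry of $XYZ\,M_t^{-1}$ is
\[
(-1)\cdot\bigl(-(u_t+k_t)\bigr)+K\cdot(-m_t)=u_t+k_t-Km_t,
\]
not $Km_t-u_t-k_t$. Your own sanity check confirms this sign, since $-47431=1930+1-6\cdot 8227$ sits in the $(1,1)$ slot of $\overline M_{2/5}$. Second, in the sign correction, left multiplication by $\operatorname{diag}(-1,1)$ negates the first row and right multiplication by $\operatorname{diag}(1,-1)$ negates the second column. So the $(1,1)$- and $(2,2)$-entries are negated once, the $(1,2)$-entry twice, and the $(2,1)$-entry not at all. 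Applied to your displayed $\overline M_t$, this gives $(1,1)$-entry $u_t+k_t-Km_t=-A_t$. Your final display therefore does not follow from the one before it.

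Correct the $(1,1)$-entry of $\overline M_t$ to $u_t+k_t-Km_t$. Its single negation then yields $A_t=Km_t-k_t-u_t$. The $(1,2)$-entry $Ku_t+\beta=(Km_tu_t-k_tu_t-u_t^2-1)/m_t$ is unchanged, and $-u_t$ becomes $u_t$. This is exactly the paper's proof, so no idea is missing. But the written derivation must be fixed at these two places before it actually establishes the identity.
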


\begin{proof}
By Lemma~\ref{lem:endpoint-completed-word}, the endpoint-completed word satisfies $\overline{\omega}_t=xyz\omega_t^{-1}$.  Hence
\[
\overline M_t=XYZM_t^{-1}.
\]
A direct multiplication of the matrices in Section~\ref{sec:mm-matrices} gives
\begin{equation}\label{eq:explicit-xyz-product}
XYZ=
\begin{bmatrix}
-1&K\\
0&-1
\end{bmatrix}.
\end{equation}
On the other hand, Theorem~\ref{thm:mm-matrix-entries} gives
\[
M_t^{-1}=
\begin{bmatrix}
-(u_t+k_t)&\displaystyle\frac{u_t^2+k_tu_t+1}{m_t}\\[6pt]
-m_t&u_t
\end{bmatrix}.
\]
Therefore
\[
\begin{aligned}
\overline M_t
&=
\begin{bmatrix}
-1&K\\
0&-1
\end{bmatrix}
\begin{bmatrix}
-(u_t+k_t)&\displaystyle\frac{u_t^2+k_tu_t+1}{m_t}\\[6pt]
-m_t&u_t
\end{bmatrix} \\[4pt]
&=
\begin{bmatrix}
u_t+k_t-Km_t&\displaystyle Ku_t-\frac{u_t^2+k_tu_t+1}{m_t}\\[6pt]
m_t&-u_t
\end{bmatrix}.
\end{aligned}
\]
Set
\[
A_t:=Km_t-k_t-u_t,
\qquad
B_t:=\frac{Km_tu_t-k_tu_t-u_t^2-1}{m_t}.
\]
Then the last display becomes
\[
\overline M_t=
\begin{bmatrix}
-A_t&B_t\\
m_t&-u_t
\end{bmatrix}.
\]
Since
\[
C_t=
\begin{bmatrix}
A_t&B_t\\
m_t&u_t
\end{bmatrix},
\]
the fixed sign correction in \eqref{eq:gc-from-extended-by-switch} gives $C_t$.
This proves the theorem.
\end{proof}

\begin{example}\label{ex:gc-from-extended-two-fifths}
Continue with $(k_1,k_2,k_3)=(1,2,0)$, $\sigma=\mathrm{id}$, and
$t=\frac{2}{5}$.  From the endpoint-completed word computed above,
\[
\overline{\omega}_{\frac{2}{5}}
=xyzyzy^{-1}z^{-1}y^{-1}x^{-1}yzyz^{-1}y^{-1}.
\]
Substituting the same matrices $X,Y,Z$ as in
Example~\ref{ex:mm-matrix-entries-two-fifths}, we obtain
\[
\begin{aligned}
\overline M_{\frac{2}{5}}
&=XYZYZY^{-1}Z^{-1}Y^{-1}X^{-1}YZYZ^{-1}Y^{-1}\\
&=\begin{bmatrix}-47431&11127\\8227&-1930\end{bmatrix}.
\end{aligned}
\]
The sign correction in Theorem~\ref{thm:gc-from-extended-word} gives
\[
\begin{bmatrix}-1&0\\0&1\end{bmatrix}
\overline M_{\frac{2}{5}}
\begin{bmatrix}1&0\\0&-1\end{bmatrix}
=\begin{bmatrix}47431&11127\\8227&1930\end{bmatrix}.
\]
This is the same matrix $C_{\frac{2}{5}}$ as in
Example~\ref{ex:gc-cf-two-fifths}.  In particular,
the $(2,1)$-entry is the GM number $8227$, and the $(2,2)$-entry is the
characteristic number $1930$.
\end{example}

\begin{remark}
The two diagonal sign matrices in \eqref{eq:gc-from-extended-by-switch} are independent of $t$, of $(k_1,k_2,k_3)$, and of $\sigma$.  Hence the passage from $\overline M_t$ to the generalized Cohn matrix is uniform over the whole Farey tree.  For $t\in(0,\infty)\cap\mathbb Q$, combining Theorem~\ref{thm:gc-from-extended-word} with Theorem~\ref{thm:gc-from-strongly-admissible} also gives
\[
F_{s_{k_1,k_2,k_3,\sigma}(t)}=
\begin{bmatrix}-1&0\\0&1\end{bmatrix}
\overline M_t
\begin{bmatrix}1&0\\0&-1\end{bmatrix}.
\]
\end{remark}

\bibliographystyle{amsplain}
\bibliography{myrefs}

\end{document}